\newcommand*{\centerfloat}{%
  \parindent \z@
  \leftskip \z@ \@plus 1fil \@minus \textwidth
  \rightskip\leftskip
  \parfillskip \z@skip}
\newcounter{ctr}
\theoremstyle{plain}
\newtheorem{theorem}{Theorem}[section]
\newtheorem{lemma}[theorem]{Lemma}
\newtheorem{corollary}[theorem]{Corollary}
\newtheorem{proposition}[theorem]{Proposition}
\newtheorem{propdef}[theorem]{Proposition-Definition}
\theoremstyle{definition}
\newtheorem{definition}[theorem]{Definition}
\newtheorem{remark}[theorem]{Remark}
\newtheorem{example}[theorem]{Example}
\newtheorem{myalgorithm}[theorem]{Algorithm}
\newcommand{\ignore}[1]{}
\newcommand{\G}{\ensuremath{\mathcal{G}}}
\newcolumntype{C}[1]{>{\centering\arraybackslash$}p{#1}<{$}}
\newcommand{\mat}[1]{\ensuremath{\left[ %
        \begin{array}{cccccccccccccccccccccccccccc} #1 \end{array}\right]}}
\newcommand{\matb}[1]{\ensuremath{\left[ %
        \begin{array}{@{}C{3mm}@{\ \,}C{3mm}@{}} #1 \end{array}\right]}}
\newcommand{\matd}[1]{\ensuremath{\left[ %
        \begin{array}{@{}C{3mm}@{\ \,}C{3mm}|C{3mm}@{\ \,}C{3mm}@{}} #1 \end{array}\right]}}
\newcommand{\matdnobrac}[1]{\ensuremath{ %
        \begin{array}{@{}C{3mm}@{\ \,}C{3mm}|C{3mm}@{\ \,}C{3mm}@{}} #1 \end{array}}}
\newcommand{\matrow}[1]{\ensuremath{\left[ %
        \begin{array}{@{}cccc@{}} #1 \end{array}\right]}}
\newcommand{\Q}{\ensuremath{\mathcal{Q}}}
\newcommand{\QQ}{\ensuremath{\mathbb{Q}}}
\newcommand{\RR}{\ensuremath{\mathbb{R}}}
\newcommand{\NN}{\ensuremath{\mathbb{N}}}
\newcommand{\sgn}{\text{\rm sgn}}
\newcommand{\U}{\mathcal{U}}
\newcommand{\W}{\ensuremath{\mathcal{W}}}
\newcommand{\ZZ}{\ensuremath{\mathbb{Z}}}
\newcommand{\be}{\begin{equation}}
\newcommand{\ee}{\end{equation}}
\newcommand{\myRes}{\text{\rm Res}}
\renewcommand{\S}{\ensuremath{\mathcal{S}}}
\newcommand{\tsr}{\ensuremath{\otimes}}
\newcommand{\y}{\ensuremath{Y}} 
\newcommand{\tto}{\ensuremath{\rightsquigarrow}}
\newcommand{\tleftright}{\ensuremath{\leftrightsquigarrow}}
\newcommand{\reading}{\text{\rm rowword}}
\newcommand{\creading}{\text{\rm colword}}
\def\Tiny{\fontsize{6pt}{6pt}\selectfont}
\newcommand{\crc}[1]{\ensuremath{\overline{#1}}\vphantom{\underline{\overline{#1}}}}
\newcommand{\stand}{\ensuremath{\text{\rm st}}} 
\newcommand{\rev}{\ensuremath{{\text{\rm rev}}}}
\newcommand{\Iplacst}{\ensuremath{{{I_\text{\rm plac}^\stand}}}}
\newcommand{\Ilamst}[1]{\ensuremath{{{I_{#1}^\text{\rm Lam, \stand}}}}}
\newcommand{\Jlamst}[1]{\ensuremath{{{J_{#1}^\text{\rm Lam, \stand}}}}}
\newcommand{\Irkst}{\ensuremath{{I_{\text{\rm KR}}^\stand}}}  
\newcommand{\Iassafst}[1]{\ensuremath{{I_{#1}^\text{\rm Assaf, \stand}}}}
\newcommand{\Des}{\ensuremath{{\text{\rm Des}}}}
\newcommand{\invi}[1]{\ensuremath{{\text{\rm inv$_{#1}'$}}}}
\newcommand{\Wi}[1]{\ensuremath{{\text{\rm W$_{#1}'$}}}}
\newcommand{\e}{\mathsf}
\newcommand{\SYT}{\text{\rm SYT}}
\newcommand{\lp}{\text{\rm LP}}
\newcommand{\lltlp}{\ensuremath{\text{\rm LLTLP}_8}}
\newcommand{\LSP}{\text{\rm LSP}}
\newcommand{\statb}{\text{\rm stat$4'b$}}
\newcommand{\stat}{\text{\rm stat}}
\newcommand{\stars}{\ensuremath{[\e{abc}]}}
\newcommand{\starsdef}{\ensuremath{[\e{def}]}}
\newcommand{\ver}{\text{\rm Vert}}
\newlength{\cellsize}
\newcommand\tableau[1]{
\vcenter{
\let\\=\cr
\baselineskip=-16000pt \lineskiplimit=16000pt \lineskip=0pt
\halign{&\tableaucell{##}\cr#1\crcr}}}
\newcommand{\tableaucell}[1]{{%
\def \arg{#1}\def \void{}%
\ifx \void \arg
\vbox to \cellsize{\vfil \hrule width \cellsize height 0pt}%
\else \unitlength=\cellsize
\begin{picture}(1,1)
\put(0,0){\makebox(1,1){$#1\vphantom{\crc{#1}}$}}
\put(0,0){\line(1,0){1}}
\put(0,1){\line(1,0){1}}
\put(0,0){\line(0,1){1}}
\put(1,0){\line(0,1){1}}
\end{picture}%
\fi}}
\newcommand\boldtableau[1]{
\vcenter{
\let\\=\cr
\baselineskip=-16000pt \lineskiplimit=16000pt \lineskip=0pt
\halign{&\boldtableaucell{##}\cr#1\crcr}}}
\newcommand{\boldtableaucell}[1]{{%
\def \arg{#1}\def \void{}%
\ifx \void \arg
\vbox to \cellsize{\vfil \hrule width \cellsize height 0pt}%
\else \unitlength=\cellsize
\begin{picture}(1,1)
\put(0,0){\makebox(1,1){$\mathbf{#1\vphantom{\crc{#1}}}$}}
\put(0,0){\line(1,0){1}}
\put(0,1){\line(1,0){1}}
\put(0,0){\line(0,1){1}}
\put(1,0){\line(0,1){1}}
\end{picture}%
\fi}}
\newcommand{\partition}[1]{{\setlength{\cellsize}{1ex} \tiny \tableau{#1}}}
\title{What makes a  D$_0$~graph Schur positive?}
\keywords{noncommutative Schur functions, D graphs, Knuth transformations, dual equivalence graphs, LLT polynomials, linear programming}
\begin{document}

\author{Jonah Blasiak}
\email{jblasiak@gmail.com}
\address{Department of Mathematics, Drexel University, Philadelphia, PA 19104}
\thanks{This work was supported by NSF Grant DMS-14071174.}

\begin{abstract}
We define a \emph{D$_0$~graph} to be a graph whose vertex set is a subset of permutations of  $n$, with edges
of the form  $\cdots \e{bac} \cdots \tleftright \cdots \e{bca} \cdots$ or $\cdots \e{acb} \cdots \tleftright \cdots \e{cab} \cdots$ (\emph{Knuth transformations}),
or $\cdots \e{bac} \cdots \tleftright \cdots \e{acb} \cdots$ or $\cdots \e{bca} \cdots \tleftright \cdots \e{cab} \cdots$ (\emph{rotation transformations}), such that whenever
the Knuth and rotation transformations at positions $i-1, i, i+1$ are available at a vertex, exactly one of these is an edge.
The \emph{generating function} of such a graph is the sum of the quasisymmetric functions associated to the descent sets of its vertices.
Assaf studied D$_0$~graphs in \cite{Sami2} and showed that they provide a rich source of examples of the D~graphs of \cite{Sami}.
A key construction of \cite{Sami} expresses the coefficient of  $q^t$ in an LLT polynomial as the generating function of a certain D$_0$~graph.
It is known that LLT polynomials are Schur positive \cite{GH}, and experimentation shows that many
D$_0$~graphs have Schur positive generating functions, which suggests a vast generalization of LLT positivity in this setting.

As part of the series of papers \cite{BLamLLT, BF}, we study  D$_0$~graphs
using the Fomin-Greene theory of noncommutative Schur functions \cite{FG}.
We construct a
D$_0$~graph whose generating function is not Schur positive  by solving a linear program related to a certain noncommutative Schur function.
We go on to construct a D~graph on the same vertex set as this  D$_0$~graph.
\end{abstract}

\maketitle

\section{Introduction}
\label{s intro}
In the  90's,
Fomin and Greene  \cite{FG} developed a theory of noncommutative Schur functions and used it  to give
positive combinatorial formulae for  the Schur expansions of a large class of symmetric functions that includes the Stanley symmetric functions and stable Grothendieck polynomials.
Lam \cite{LamRibbon}  later showed how this theory can be adapted to study LLT polynomials.
This paper is part of a series of papers \cite{BLamLLT,BF} which further develop this theory and connect it to the D~graphs of Assaf \cite{Sami}.


LLT polynomials are  a family of symmetric functions defined by Lascoux, Leclerc, and Thibon \cite{LLT}. They play an important role in Macdonald theory and have intriguing connections to Kazhdan-Lusztig theory,  $k$-Schur functions, and plethysm.
LLT polynomials were proven to be Schur positive \cite{LT00, GH} using deep geometric results from Kazhdan-Lusztig theory, but
a fundamental problem remains open:
\[\parbox{13cm}{Find a positive combinatorial formula, in terms of simple tableau-like objects, for the coefficients in the Schur expansion of LLT polynomials.}\]
This problem is particularly important because the
Haglund-Haiman-Loehr formula \cite{HHL} expresses the transformed Macdonald polynomials $\tilde{H}_\mu(\mathbf{x};q,t)$ as a positive sum of LLT polynomials.
Hence a solution to this problem would yield an explicit formula for the Schur expansions of transformed Macdonald polynomials.

To attack this problem, Lam \cite{LamRibbon} showed that expanding LLT polynomials in terms of Schur functions is equivalent to writing certain noncommutative versions of  Schur functions $\mathfrak{J}_\lambda(\mathbf{u})$ as positive sums of monomials in an algebra he called the algebra of ribbon Schur operators.
Unfortunately, this mostly translates the difficulty of computing the Schur expansions to another language but does not make the problem much easier.
This setup does produce some new results, however.
Lam obtains the coefficient of $s_\lambda(\mathbf{x})$ in an LLT polynomial for $\lambda$ of the form  $(a,1^b)$,  $(a,2)$, or $(2,2,1^a)$.
In \cite{BLamLLT}, we obtain a formula for the Schur expansion of LLT polynomials indexed by a 3-tuple of skew shapes, proving and generalizing a conjecture of Haglund \cite{Haglund}.

This paper grew out of an attempt to push these results further.
The approach we pursue here and in \cite{BLamLLT,BF} combines ideas of Fomin-Greene \cite{FG}, Lam \cite{LamRibbon}, and Assaf \cite{Sami, Sami2}.
D$_0$~graphs, which we now define, are central objects in this approach.
These graphs were studied in \cite{Sami2} (with slightly different conventions), and the D~graphs of \cite{Sami} corresponding to LLT polynomials are D$_0$~graphs.

\begin{definition}\label{d intro D0 graph}
Let $\W_n$ denote the set of words of length $n$ with no repeated letter in the alphabet  $\{\e{1,2,\dots,N}\}$.
A \emph{D$_0$~graph} is a graph on a vertex set $W \subseteq \W_n$
with edges colored from the set  $\{2,3,\dots,n-1\}$ such that
\begin{list}{(\roman{ctr})}{\usecounter{ctr} \setlength{\itemsep}{2pt} \setlength{\topsep}{3pt}}
\item each $i$-edge is one of the four pairs
\begin{align*}
&\text{$\{\e{v \, bac \, w}, \e{v \, bca \, w}\}$ or  $\{\e{v \, acb \, w}, \e{v \, cab \, w}\}$ (\emph{Knuth edges}), or} \\[-1mm]
&\text{$\{\e{v \, bac \, w}, \e{v \, acb \, w}\}$ or  $\{\e{v \, bca \, w}, \e{v \, cab \, w}\}$ (\emph{rotation edges})}
\end{align*}
for some letters $a<b<c$ and words $\e{v},\e{w}$ such that $\e{v}$ has length  $i -2$,
\item
each element of $W \cap \{\e{v \, bac \, w}, \e{v \, bca \, w}, \e{v \, acb \, w}, \e{v \, cab \, w}\}$ belongs to exactly one \mbox{$i$-edge}
for all letters $a<b<c$ and words $\e{v},\e{w}$ such that $\e{v}$ has length  $i -2$.
\end{list}
\end{definition}
See Figure \ref{f intro} for an example.  Also, Figure \ref{f D graph k3} (\textsection \ref{ss D0 graphs}) depicts the D$_0$~graph on the vertex set $\S_4$ with only rotation edges.
Knuth and rotation edges correspond to the operators  $d_i$ and  $\widetilde{d_i}$ of \cite{Sami} (see \textsection\ref{ss D0 graphs}).

%
%

\begin{figure}
\begin{tikzpicture}[xscale = 1.8,yscale = 1.6]
\tikzstyle{vertex}=[inner sep=0pt, outer sep=4pt]
\tikzstyle{aedge} = [draw, thin, ->,black]
\tikzstyle{edge} = [draw, thick, -,black]
\tikzstyle{dashededge} = [draw, very thick, dashed, black]
\tikzstyle{LabelStyleH} = [text=black, anchor=south]
\tikzstyle{LabelStyleV} = [text=black, anchor=east]
\tikzstyle{LabelStyleH2} = [text=black, anchor=north]
\tikzstyle{doubleedge} = [draw, thick, double distance=1pt, -,black]
\tikzstyle{hiddenedge} = [draw=none, thick, double distance=1pt, -,black]

\begin{scope} [xshift = -2cm]
 \node[vertex] (v1) at (1,2){\footnotesize$\e{2134}$};
 \node[vertex] (v2) at (2,2){\footnotesize$\e{2314}$};
 \node[vertex] (v3) at (3,2){\footnotesize$\e{2341}$};
 \node[vertex] (v4) at (2,1){\footnotesize$\e{2143}$};
 \node[vertex] (v5) at (3,1){\footnotesize$\e{2413}$};
\end{scope}
 \draw[edge] (v1) to node[LabelStyleH]{{\Tiny 2}} (v2);
 \draw[edge] (v2) to node[LabelStyleH]{{\Tiny 3}} (v3);
 \draw[doubleedge] (v4) to node[LabelStyleH]{{\Tiny 2}} (v5);
 \draw[hiddenedge] (v4) to node[LabelStyleH2]{{\Tiny 3}} (v5);

\begin{scope} [xshift = 2cm]
 \node[vertex] (v1) at (1,2){\footnotesize$\e{2134}$};
 \node[vertex] (v2) at (2,2){\footnotesize$\e{2314}$};
 \node[vertex] (v3) at (3,2){\footnotesize$\e{2341}$};
 \node[vertex] (v4) at (2,1){\footnotesize$\e{2143}$};
 \node[vertex] (v5) at (3,1){\footnotesize$\e{2413}$};
\end{scope}
 \draw[edge] (v1) to node[LabelStyleH]{{\Tiny 2}} (v2);
 \draw[edge] (v2) to node[LabelStyleV]{{\Tiny $\tilde{3}$}} (v4);
 \draw[edge] (v4) to node[LabelStyleH]{{\Tiny 2}} (v5);
 \draw[edge] (v3) to node[LabelStyleV]{{\Tiny $\tilde{3}$}} (v5);
 \end{tikzpicture}
\caption{\label{f intro} On the left is a D$_0$~graph whose edges are all Knuth edges, and on the right is the D$_0$~graph obtained from this one by changing a pair of its edges to rotation edges.  Knuth  $i$-edges (resp. rotation  $i$-edges) are labeled  $i$ (resp.  $\tilde{i}$).}
\end{figure}

\begin{definition}\label{d gen func}
The \emph{generating function} of a D$_0$~graph $\G$ is
\[\sum_{\e{w} \in \ver(\G)} Q_{\Des(\e{w})}(\mathbf{x}),\]
where $Q_{\Des(\e{w})}(\mathbf{x})$ denotes Gessel's fundamental quasisymmetric function \cite{GesselPPartition}, $\Des(\e{w})$ is the descent set of $\e{w}$, and $\ver(\G)$ denotes the vertex set of $\G$.
\end{definition}

In this paper and the companion paper \cite{BF}, we study the generating functions of D$_0$~graphs using the theory of noncommutative Schur functions.
Extensive computer investigations and results of \cite{Sami2, Sami, BLamLLT} led us to speculate
that D$_0$~graphs always have Schur positive generating functions.
However, this is not true.
In this paper, we solve a linear program to find a D$_0$~graph on a vertex set  $\bar{W}^* \subseteq \S_8$ whose generating function is not Schur positive.
We go on to show that D~graphs in the sense of \cite{Sami} do not always have Schur positive generating functions by
constructing a D~graph on the same set  $\bar{W}^*$.

Given the abundance of Schur positivity in this setting, these examples were quite surprising and demand further investigation of the question
\begin{align}
\label{e question}
\text{What hypotheses must a  D$_0$~graph satisfy to guarantee Schur positivity?}
\end{align}
See Section~\ref{s Concluding remarks} for further discussion.

We now describe the theory that led us to these examples.

\subsection{Noncommutative Schur functions}
\label{ss noncomm sym intro}
The reader may find it helpful here to take a look at the paper \cite{FG} of Fomin and Greene, though we do not formally depend on it in any way.
Let
\[\U = \ZZ\langle u_1,u_2,\dots,u_N \rangle\]
 be the free associative algebra in the noncommuting variables $u_1, u_2, \dots, u_N$.
We frequently write  $\e{a}$ for the variable $u_a$ and think of the monomials of~\,$\U$ as words in the alphabet $\{\e{1,2,\dots,N}\}$.

The \emph{noncommutative elementary symmetric functions} are given by
\begin{equation*}
e_d(\mathbf{u})=\sum_{N \ge i_1 > i_2 > \cdots > i_d \ge 1}u_{i_1}u_{i_2}\cdots u_{i_d} \ \in \, \U
\end{equation*}
for any positive integer $d$; set $e_0(\mathbf{u})=1$ and $e_{d}(\mathbf{u}) = 0$ for $d<0$.

In this paper, we define the \emph{noncommutative Schur functions} by the following noncommutative analog of the Jacobi-Trudi formula:
\[ \mathfrak{J}_\lambda(\mathbf{u}) = \sum_{\pi\in \S_{t}}
\sgn(\pi) \, e_{\lambda'_1+\pi(1)-1}(\mathbf{u}) e_{\lambda'_2+\pi(2)-2}(\mathbf{u}) \cdots e_{\lambda'_{t}+\pi(t)-t}(\mathbf{u}),\]
where  $\lambda$ is any partition and $\lambda' = (\lambda'_1,\dots, \lambda'_t)$ is the conjugate partition of  $\lambda$.

The Fomin-Greene theory of noncommutative Schur functions is related to  D$_0$~graphs by the following algebra.
Define the algebra $\U/\Irkst$ to be the quotient of~\,$\U$ by the relations
\begin{alignat}{3}
&\e{b(ac-ca)} = \e{(ac-ca)b} \qquad &&\text{for $a<b<c$,} \label{e Irkst1} \\
&\e{w} = 0 \qquad &&\text{for words $\e{w}$ with a repeated letter.} \label{e Irkst2}
\end{alignat}
Here,  $\Irkst$ denotes the corresponding two-sided ideal of~\,$\U$; the K stands for Knuth and R for rotation, and ``st'' is short for ``standard word'' to
remind the reader of relation~\eqref{e Irkst2}.

We consider  $\U$ to be endowed with the symmetric bilinear form $\langle\cdot ,\cdot  \rangle$ for which monomials form an orthonormal basis.
We show that a zero-one vector (in the monomial basis) belonging to $(\Irkst)^\perp$ is the same as the sum of the vertices of a  D$_0$~graph (Proposition-Definition \ref{pd KR set}).
We also prove that the elementary symmetric functions $e_d(\mathbf{u})$ commute in $\U/\Irkst$ (Lemma \ref{l es commute}), which is the first step required to apply the Fomin-Greene approach.
Using these facts, we recast a result of Fomin-Greene to show that for any D$_0$~graph $\G$,
\begin{align}\label{e intro coef}
\Big(\text{the coefficient of $s_\lambda(\mathbf{x})$ in the generating function of $\G$}\Big) \, = \, \text{$\Big\langle \mathfrak{J}_\lambda(\mathbf{u}), \sum_{\e{w} \in \ver(\G)}\e{w} \Big\rangle$.}
\end{align}
This identity shows that if there is a positive monomial expression for $\mathfrak{J}_\lambda(\mathbf{u})$ in $\U/\Irkst$ then
the generating function of any  D$_0$~graph is Schur positive.
The main theoretical result of this paper is an approximate converse of this statement, whose precise version we give shortly.
Informally, it says that the Fomin-Greene approach to proving Schur positivity is a powerful one because it only fails
if the symmetric functions to which it is applied are not actually all Schur positive.

Another notable theoretical result of this paper, which is similar to a result of \cite{LamRibbon}, is
\begin{theorem}\label{t intro hook}
The noncommutative Schur function $\mathfrak{J}_\lambda(\mathbf{u})$ is a positive sum of monomials in  $\U/\Irkst$
when $\lambda$ is a hook shape or of the form  $(a,2)$ or  $(2,2,1^a)$.
\end{theorem}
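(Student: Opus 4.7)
The plan is to prove each of the three cases separately by exhibiting an explicit positive monomial expression and verifying that it equals $\mathfrak{J}_\lambda(\mathbf{u})$ modulo $\Irkst$. Throughout, I will rely on Lemma \ref{l es commute} to manipulate the $e_d(\mathbf{u})$'s as if they commute, so that the Jacobi--Trudi formula makes sense in $\U/\Irkst$; and on Proposition-Definition \ref{pd KR set}, which identifies $\{0,1\}$-vectors in $(\Irkst)^\perp$ with vertex sets of D$_0$ graphs.

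The natural candidate for the positive expression is the sum of column-reading words over all \emph{standard} SSYT of shape $\lambda$---that is, SSYT filled with any distinct entries from $\{1,\dots,N\}$ satisfying the tableau inequalities. For the hook $\lambda=(a,1^b)$, such a tableau is determined by an increasing sequence $x_1<\cdots<x_a$ in the first row and an increasing sequence $y_1<\cdots<y_b$ in the leg, with $x_1<y_1$, giving the column-reading word $u_{y_b}\cdots u_{y_1}u_{x_1}\cdots u_{x_a}$. I would prove the hook identity by induction on $b$. The base case $b=0$ follows by direct expansion of the Jacobi--Trudi determinant for $\lambda=(a)$. For the inductive step, one writes the determinantal expression for $\mathfrak{J}_{(a,1^b)}$ as an alternating combination involving $e_{b+1}$ and $\mathfrak{J}_{(a,1^{b-1})}$-type factors, applies the inductive hypothesis, and performs a monomial-level cancellation. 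The key technical ingredient is a description of how left-multiplication by $e_{b+1}$ acts on the column-reading sum, using the no-repeat relation \eqref{e Irkst2} together with the combined rotation--Knuth relation $\e{bac}+\e{cab}\equiv \e{bca}+\e{acb}$ of \eqref{e Irkst1} to straighten non-column-reading terms.

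For $\lambda=(a,2)$, the Jacobi--Trudi expansion reduces (after using that $e_d$'s commute in the quotient) to a short alternating sum of products $e_{a+1-i}e_{1+i}$, $i=0,1,2$. I would expand each such product as a signed sum of monomials and verify that the cancellation yields exactly the sum of column-reading words of SSYT of shape $(a,2)$; this requires a careful case analysis based on the relative order of the five distinct letters involved, in which the rotation--Knuth relation is invoked only at a controlled set of ``overlap'' positions. For $\lambda=(2,2,1^a)$, even though the conjugate $\lambda'=(a+2,2)$ has already been treated, the ideal $\Irkst$ is not stable under word-reversal (the Knuth edges are oriented differently from the rotation edges on such reversals), so a transpose-duality shortcut is unavailable. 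I would therefore give a separate direct proof, structurally parallel to the $(a,2)$ case but with the roles of row and column reading exchanged.

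The main obstacle in every case is the same: because $\Irkst$ imposes only the combined identity $\e{bac}+\e{cab}\equiv\e{bca}+\e{acb}$ rather than the two plactic relations $\e{bac}\equiv\e{bca}$, $\e{acb}\equiv\e{cab}$ separately, the quotient $\U/\Irkst$ is strictly larger than the plactic algebra and the classical Fomin--Greene straightening algorithm cannot be used verbatim. The delicate point will be showing that the weaker relation is nonetheless strong enough to pair off the ``bad'' (non-column-reading) monomials appearing in the Jacobi--Trudi expansion so that they cancel in a manner compatible with $\Irkst$; I expect this pairing to be visible only after the expansion is reorganized according to the positions of the overlapping triples of letters that trigger \eqref{e Irkst1}.
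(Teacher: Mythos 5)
Your high-level strategy — exhibiting explicit positive monomial expressions built from tableau reading words and verifying them modulo $\Irkst$, using Lemma~\ref{l es commute} to make the Jacobi--Trudi formula usable — is the right one and broadly matches the paper. For hooks, your candidate formula (sum of column-reading words) is correct and the paper proves it by an induction similar in spirit to yours, but phrased in terms of the noncommutative \emph{column-flagged} Schur functions $J_\alpha^{\mathbf{n}}$, which allows a clean joint recursion on $a$, $b$, and the flags $n_i$ via the elementary identity \eqref{e ek induction}; that machinery is doing the ``description of how left-multiplication by $e_{b+1}$ acts'' that you anticipate as the hard part.

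There is, however, a genuine gap that would derail your plan for the remaining two families. You assert that $\Irkst$ is not stable under word-reversal, so that the transpose shortcut is unavailable, and therefore propose separate direct computations for both $(a,2)$ and $(2,2,1^a)$. This is false: expanding the degree-$3$ generator gives $\e{bac}-\e{bca}-\e{acb}+\e{cab}$, and applying $\rev$ term by term yields $\e{cab}-\e{acb}-\e{bca}+\e{bac}$, which is the same element. Hence $\rev(\Irkst)=\Irkst$, and $\rev$ descends to an anti-automorphism of $\U/\Irkst$ (this is exactly Proposition~\ref{p order preserving Irk only}(iii)). Combined with $\rev(e_d(\mathbf{u}))\equiv h_d(\mathbf{u})$ and the two Jacobi--Trudi formulas, one gets $\rev(\mathfrak{J}_\lambda(\mathbf{u}))=\mathfrak{J}_{\lambda'}(\mathbf{u})$ in $\U/\Irkst$ (Proposition~\ref{p rev J}), so only one of the two conjugate families needs a direct argument. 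Your proof would do twice the necessary work and, more importantly, the stated justification for doing so is wrong.

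A second problem is that the positive expression you intend to verify for the two-row/two-column case — a pure sum of column-reading words — is not correct. Already for $\lambda=(2,2)$ with four letters one computes
\[
e_2(\mathbf{u})^2 - e_3(\mathbf{u})e_1(\mathbf{u}) \equiv \e{2143}+\e{3412} \pmod{\Irkst},
\]
whereas the sum of column-reading words of SSYT of shape $(2,2)$ on $\{1,2,3,4\}$ is $\e{2143}+\e{3142}$, and $\e{3142}\not\equiv\e{3412}$ in $\U/\Irkst$. The correct answer (Theorem~\ref{t fat hook}(ii)) mixes column-reading words (for those $T$ with $T_{1,2}>T_{2,1}$) and row-reading words (for those with $T_{1,2}<T_{2,1}$); the cancellation you describe will not produce your candidate, and the ``careful case analysis'' will fail at precisely the overlap positions that force the row-reading terms to appear. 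So you would need to first guess the right mixed formula before the verification step can succeed.
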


\subsection{Linear programming duality}

We now give the precise statement of the ``approximate converse'' mentioned above.
Fix a positive integer  $d$.
Let  $\e{w^1,\dots,w^m}$ be the words of
$\U$ of length  $d$ (all the words, not just those in  $\W_d$).
Let  $\RR_{\ge 0}^m$ denote the polyhedron $\{\sum_{i} x_i \e{w^i} \mid x_i \ge 0\} \subseteq \RR\U$ and let
$\mathbf{c}_d = \sum_{\e{w} \in \W_d} \e{w} \in \RR_{\ge 0}^m$.

\begin{theorem} \label{t intro linear program 2}
Let  $\lambda$ be a partition  of  $d$.
Let $J^{(1)}, \ldots, J^{(l)}$ be two-sided ideals of~\,$\U$
such that for each  $j \in [l]$,
\begin{itemize}
\item $J^{(j)}$ is generated by monomials (i.e. words in  $\U$) and binomials of the form $\e{v}-\e{w}$, where  $\e{v}$ and $\e{w}$ are words of the same length,
\item  $J^{(j)} \supseteq \Irkst$,
\item $\e{w^i} \in J^{(j)}$ if and only if $\e{w^i} \notin \W_d$, for each word $\e{w^i}$ of length  $d$.
\end{itemize}
Set $J = \bigcap_j J^{(j)}$.
The following are equivalent:
\begin{list}{\emph{(\roman{ctr})}}{\usecounter{ctr} \setlength{\itemsep}{2pt} \setlength{\topsep}{3pt}}
\item[\emph{(i$'$)}] $\mathfrak{J}_\lambda(\mathbf{u})$ is not a real positive sum of monomials in $\RR \tsr_{\ZZ} \U/J$.
\item[\emph{(v$'$)}] There exists an $\mathbf{f} \in \RR_{\ge 0}^m$ such that $\mathbf{c}_d - \mathbf{f} \in \sum_j \big((\RR J^{(j)})^\perp \cap \RR_{\ge 0}^m \big)$ and
\newline $\langle \mathfrak{J}_\lambda(\mathbf{u}), \mathbf{f} \rangle < 0$.
\end{list}
\end{theorem}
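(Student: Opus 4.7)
The plan is to establish the equivalence via linear programming duality, with the key algebraic input being that $\mathbf{c}_d$ lies in $((\RR J^{(j)})_d)^\perp$ for every $j$. Indeed, the generators of $J^{(j)}$ in degree $d$ are non-standard monomials (on which $\mathbf{c}_d$ vanishes) and binomials $\e{v}-\e{w}$; by the third hypothesis these binomials must be between two standard words of equal length (otherwise a standard word would be forced into $J^{(j)}$, contradicting the iff), so $\langle \mathbf{c}_d, \e{v}-\e{w}\rangle = 1-1 = 0$.

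For the direction (v$'$) $\Rightarrow$ (i$'$), assume for contradiction that $\mathfrak{J}_\lambda(\mathbf{u}) = \sum_i \alpha_i \e{w^i} + r$ with $\alpha_i \ge 0$ and $r \in (\RR J)_d$. Each $\mathbf{g}_j \in U_j := ((\RR J^{(j)})_d)^\perp \cap \RR_{\ge 0}^m$ lies in $((\RR J^{(j)})_d)^\perp$, so combined with the key fact, $\mathbf{f} = \mathbf{c}_d - \sum_j \mathbf{g}_j$ lies in $\sum_j ((\RR J^{(j)})_d)^\perp = ((\RR J)_d)^\perp$. Thus $\langle \mathbf{f}, r\rangle = 0$ and $\langle \mathbf{f}, \mathfrak{J}_\lambda\rangle = \sum_i \alpha_i \mathbf{f}_i \ge 0$, contradicting (v$'$).

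For the direction (i$'$) $\Rightarrow$ (v$'$), formulate (v$'$) as a linear program: minimize $\langle \mathfrak{J}_\lambda, \mathbf{f}\rangle$ subject to $\mathbf{f} \in \RR_{\ge 0}^m$, $\mathbf{g}_j \in U_j$, and $\mathbf{f} + \sum_j \mathbf{g}_j = \mathbf{c}_d$. This LP is feasible (take $\mathbf{f}=\mathbf{c}_d$, $\mathbf{g}_j=0$, using $\mathbf{c}_d \in \bigcap_j U_j$) and bounded below, so strong duality identifies the primal minimum with the maximum of the dual LP: maximize $\langle \mu, \mathbf{c}_d\rangle$ subject to $\mu \le \mathfrak{J}_\lambda$ componentwise and, for each $j$, there exists $r_j \in (\RR J^{(j)})_d$ with $\mu \le r_j$. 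If (v$'$) fails, this dual max is $\ge 0$; pick an optimizer $(\mu, (r_j)_j)$. Since $\langle r_j, \mathbf{c}_d\rangle = 0$ (by the key fact) and $r_j - \mu \ge 0$ componentwise, the pairing $\langle r_j - \mu, \mathbf{c}_d\rangle = -\langle \mu, \mathbf{c}_d\rangle \le 0$ combined with termwise nonnegativity forces $(r_j)_i = \mu_i$ for every standard word $\e{w^i}$, i.e., $r_j|_{\W_d} = \mu|_{\W_d}$.

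Because $\Irkst \subseteq J^{(j)}$ contains every non-standard word of length $d$, we have a canonical decomposition $(\RR J^{(j)})_d = \RR\W_d^c \oplus \bar{J}^{(j)}$, where $\bar{J}^{(j)} := (\RR J^{(j)})_d \cap \RR\W_d$ and $\W_d^c$ denotes the non-standard words of length $d$. Hence $\mu|_{\W_d} = r_j|_{\W_d} \in \bar{J}^{(j)}$ for each $j$, so $\mu|_{\W_d} \in \bigcap_j \bar{J}^{(j)} = \bar{J} \subseteq (\RR J)_d$. Setting $r := \mu|_{\W_d} + \mathfrak{J}_\lambda|_{\W_d^c} \in (\RR J)_d$ and $\alpha := \mathfrak{J}_\lambda - r \ge 0$ yields a positive monomial expression $\mathfrak{J}_\lambda = \alpha + r$ in $\U/J$, contradicting (i$'$). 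The main obstacle is the LP duality setup and verifying that $\langle \mu, \mathbf{c}_d\rangle \ge 0$ combined with $r_j \ge \mu$ forces $\mu$'s standard-word part into $\bar{J}$; this reduction uses all three hypotheses on the $J^{(j)}$, especially that $\Irkst \subseteq J^{(j)}$ and that the monomial generators in degree $d$ are the same non-standard words across all $j$.
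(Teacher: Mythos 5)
Your proposal is correct, and at its core it runs on the same engine as the paper's proof of Theorem~\ref{t linear program 2}: linear programming duality. But the route through the duality is different enough to be worth comparing. The paper sets up the minimization program $\lp_\cap$ with variable $\mathbf{x}$ (a candidate positive-monomial representative of $\mathfrak{J}_\lambda(\mathbf{u})$), using an explicit matrix $A$ whose rows are the zero-one indicator vectors of the nonzero equivalence classes of the $\U/J^{(j)}$, and then walks through a chain of five equivalences (i)--(v), with (iii)$\Leftrightarrow$(iv) being the LP duality step. A key technical ingredient there is the "monoid basis" lemma: these indicator vectors generate the cone $U_j := (\RR J^{(j)}_d)^\perp \cap \RR_{\ge 0}^m$ over $\RR_{\ge 0}$, which is what makes the dual $\lp_\cap^\vee$ and statement (v) line up. You instead take (v$'$) at face value as a feasibility/optimality question in the variable $\mathbf{f}$ (your primal is, after eliminating $\mathbf{f}$, exactly $M$ minus the paper's $\lp_\cap^\vee$), and you compute the dual via polar cones, describing $U_j^\circ = (\RR J^{(j)})_d - \RR_{\ge 0}^m$ directly; the extraction of a positive monomial expression from a dual optimizer $(\mu,(r_j))$ then uses the direct-sum decomposition $(\RR J^{(j)})_d = \RR\W_d^c \oplus \bar{J}^{(j)}$ rather than showing the inequality constraints $A\mathbf{x}^*\ge\mathbf{b}$ are tight. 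This avoids invoking the monoid-basis fact explicitly (the H-representation of the cone suffices), and it gives a somewhat cleaner, more abstract account of exactly how the hypotheses on the $J^{(j)}$ enter: the orthogonality $\mathbf{c}_d \in (\RR J^{(j)}_d)^\perp$ for each $j$, and the splitting of $(\RR J^{(j)})_d$ into its standard and nonstandard parts. A small wrinkle to flag: the argument that $\mathbf{c}_d \perp (\RR J^{(j)})_d$ needs to be run on the degree-$d$ spanning set $\{a g b\}$ (generator $g$ conjugated by words $a,b$) rather than only on the generators themselves, but the reasoning you gave (a binomial cannot relate a standard and a nonstandard word without violating the third hypothesis) extends verbatim. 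Also, dual feasibility is not established by $\mu=0$ (since $\mathfrak{J}_\lambda(\mathbf{u})$ has negative coefficients), but it follows from primal feasibility plus boundedness, which is what you implicitly use.
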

These conditions are labelled (i$'$) and (v$'$) to match Theorem~\ref{t linear program 2}, the full version of Theorem~\ref{t intro linear program 2}.
Note that if (v$'$) holds, then $\mathbf{f}$ lies in $(\RR J)^\perp \cap \RR_{\geq 0}^m \subseteq (\RR \Irkst)^\perp \cap \RR_{\geq 0}^m$ and hence by \eqref{e intro coef} is (a  $\RR_{\ge 0}$-weighted version of) the sum of vertices of a  D$_0$~graph whose generating function is not Schur positive.
The proof of this theorem uses linear programming duality.

\subsection{D$_0$~graphs and D~graphs are not always Schur positive}

One of the main goals of this project \cite{BLamLLT,BF} was to find a positive expression for $\mathfrak{J}_\lambda(\mathbf{u})$ in $\U/\Irkst$.
By applying \eqref{e intro coef} to certain  D$_0$~graphs constructed in \cite{Sami}, this would give an elegant and uniform formula for the Schur expansion of LLT polynomials.
It is possible to carry this out for some $\lambda$ and in certain quotients of~\,$\U/\Irkst$ (e.g. Theorem~\ref{t intro hook} and \cite[Theorem 4.1]{BLamLLT}).
The main purpose of this paper, however, is a negative result which shows that for $\lambda = (2,2,2,2)$, both conditions (i$'$) and (v$'$) in Theorem~\ref{t intro linear program 2} hold.
We exhibit a set $\bar{W}^* \subseteq \S_8$ such that  $\mathbf{f} := \sum_{\e{w} \in \bar{W}^*} \e{w} \in (\Irkst)^\perp$ and
 $\langle \mathfrak{J}_{(2,2,2,2)}(\mathbf{u}), \mathbf{f} \rangle = -1$ (we do not know in general whether $\mathbf{f}$ can always be chosen to be a zero-one vector if (v$'$) holds, but it happens to be so for our choice of  $\lambda$ and  $J^{(j)}$ here).
Hence there is a D$_0$~graph $H^*$ on the vertex set $\bar{W}^*$ whose generating function is not Schur positive.


As already mentioned, this example was quite surprising given our speculation based on computer experimentation and \cite{Sami2,Sami,BLamLLT}.
It naturally led us to the question \eqref{e question} above, particularly focusing on whether axioms from \cite{Sami} are enough to guarantee Schur positivity.
Though we will not go into the details of these axioms until \textsection\ref{ss D graphs},
we briefly mention their relation to D$_0$~graphs.  A D~graph as defined in \cite{Sami} is a colored graph (with some extra data)
that satisfies the following properties from that paper: axioms 1, 2, 3, 4$'a$, 4$'b$, and 5 and  $\LSP_4$ and  $\LSP_5$.
Most of these properties are clearly satisfied for  D$_0$~graphs, but some require substantial proof and two of them do not always hold.
For instance, D$_0$~graphs  do not always satisfy axiom 5, which states that $i$-edges commute with $j$-edges for $|i-j| \geq 3$.
Using Theorem~\ref{t intro hook}, we prove that a D$_0$~graph is a D~graph if and only if it satisfies axioms  $4'b$ and 5.
A similar result is proved in \cite{Sami2} using different methods.

After  finding the example above, we investigated whether imposing axiom~5 is enough to guarantee Schur positivity.
In \textsection\ref{ss forcing axiom 5}, we describe an algorithm that
starts with the vertex set $\bar{W}^*$ and grows a  D$_0$~graph on this set by
adding edges that are forced by axiom~5.
 This produces a  D$_0$~graph satisfying axiom~5 whose generating function is not Schur positive, so this axiom is not enough.
The first graph we produced with this method was not a D~graph because it did not satisfy axiom~$4'b$.
With a considerably more intricate algorithm, we were able to produce a graph $\G^*$ on a subset of $\bar{W}^*$ satisfying axioms 5 and $4'b$.
It has 4950 vertices and is described in the accompanying data files.
Moreover, it satisfies a slightly stronger version of axiom~4$'b$, which we call axiom~4$''b$ (see \textsection\ref{ss axiom 4''b}).
\begin{theorem}
There exists a D$_0$~graph $\G^*$ satisfying axioms 4$''b$ and 5 whose generating function is not Schur positive.
Hence D~graphs do not always have Schur positive generating functions.
\end{theorem}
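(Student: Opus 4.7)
The plan is to bootstrap from the non-Schur-positive D$_0$ graph $H^*$ already constructed on $\bar{W}^* \subseteq \S_8$ (for which $\langle \mathfrak{J}_{(2,2,2,2)}(\mathbf{u}), \sum_{\e{w} \in \bar{W}^*} \e{w}\rangle = -1$) and modify it by a constraint-propagation algorithm that enforces axioms 5 and 4$''b$, while pruning just enough vertices to keep the modified graph a legitimate D$_0$ graph. Because the coefficient of $s_{(2,2,2,2)}$ has "slack" $-1$, we have some room: removing vertices from $\bar{W}^*$ changes the generating function, but as long as the removals do not push the coefficient back up to a nonnegative integer, Schur non-positivity is preserved.

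First I would cast the two desired axioms as local forcing rules. Axiom 5 says that when $|i - j| \geq 3$, any $i$-edge $\{\e{u}, \e{u}'\}$ and $j$-edge $\{\e{u}, \e{u}''\}$ sharing a vertex must complete to a commutative square; this is a purely local closure condition on edges at positions separated by at least three in the word. Axiom 4$''b$ similarly imposes a short-range constraint that dictates which of the Knuth/rotation choices at a pair of overlapping positions $i,i+1$ must occur together. Treating these as hard constraints, the procedure is: start with the edge set of $H^*$ on vertex set $V := \bar{W}^*$, repeatedly take the axiom-5/axiom-4$''b$ closure, and whenever closure forces an edge that cannot be consistently realized (for example because Definition \ref{d intro D0 graph}(ii) would then fail at some vertex, or because incompatible Knuth and rotation assignments are demanded at the same pair of positions), delete the offending vertices from $V$ and restart the closure.

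The main obstacle, and the reason a "considerably more intricate" algorithm is needed, is the global interaction between axiom 4$''b$ and the Knuth-versus-rotation choice required by Definition \ref{d intro D0 graph}(i). Naively propagating axiom 5 alone produces a D$_0$ graph as described in \textsection\ref{ss forcing axiom 5} that still fails axiom 4$''b$; to also enforce 4$''b$ one must sometimes make a \emph{choice} between two possible edge completions, and the wrong choice either violates 4$''b$ downstream or forces the deletion of so many vertices that Schur positivity is inadvertently restored. I would handle this with a backtracking search ordered by severity of the local constraints, preferring completions that keep the accumulated vertex-removal count small; termination is clear because $V \subseteq \bar{W}^*$ strictly decreases in the pruning steps and the closure at each stage is monotone.

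Once the algorithm halts with a D$_0$ graph $\G^*$ on a vertex set $V^* \subseteq \bar{W}^*$ satisfying axioms 4$''b$ and 5, the verification is a finite computation. By Proposition-Definition \ref{pd KR set}, the vertex sum $\mathbf{f}^* = \sum_{\e{w} \in V^*} \e{w}$ lies in $(\Irkst)^\perp$, so identity \eqref{e intro coef} gives the coefficient of $s_\lambda(\mathbf{x})$ in the generating function of $\G^*$ as $\langle \mathfrak{J}_\lambda(\mathbf{u}), \mathbf{f}^* \rangle$. It then suffices to exhibit a single partition $\lambda \vdash 8$ for which this inner product is negative; the natural candidate is $\lambda = (2,2,2,2)$, matching the $\lambda$ that produced non-Schur-positivity for $H^*$. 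This pairing can be computed directly from the explicit $\G^*$ described in the accompanying data files (4950 vertices), and finally the implication that D graphs are not always Schur positive follows because any D$_0$ graph satisfying axioms 4$'b$ and 5 is a D graph, by the characterization proved earlier in the paper using Theorem \ref{t intro hook}, and axiom 4$''b$ is strictly stronger than 4$'b$.
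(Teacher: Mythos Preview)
Your overall framework---computational search followed by verification via the data files, with the final implication using Theorem~\ref{t D0 satisfy 123etc}---matches the paper. But the algorithm you describe diverges from what the paper actually does in one important way: you propose to \emph{delete vertices} from $\bar{W}^*$ when constraint propagation produces a conflict, whereas the paper never deletes vertices during the search. The degrees of freedom in the paper's algorithm are the \emph{edge types} (Knuth versus rotation) of the undetermined KR squares of a fixed partial D$_0$ graph; the vertex set stays fixed throughout. Concretely, the paper starts from the minimal partial D$_0$ graph on a $6026$-vertex component (Remark~\ref{r fancy KR set to D0}), then runs GrowDGraph (Algorithm~\ref{a force4'b etc}), which recursively assigns a type to each undetermined KR square, propagates axiom~5 via AdvancedForceAxiom5, and uses the statistic $\statb$ to steer toward axiom~$4'b$. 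Only after the algorithm terminates does one pass to the largest connected component, yielding the $4950$-vertex graph $\G^*$.

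Your vertex-deletion idea would need extra care you do not supply: removing a single vertex from a full KR square leaves three vertices, violating the KR set condition of Proposition-Definition~\ref{pd KR set}, so deletions cascade in a way that is hard to control and could easily destroy the negative $s_{(2,2,2,2)}$ coefficient. Also, your description of axiom~$4''b$ as a ``short-range constraint'' at positions $i,i+1$ is inaccurate: it is stated in terms of weak flat $i$-chains (\S\ref{ss axiom 4''b}), which can be long, and this nonlocality is precisely why the paper uses a global heuristic ($\statb$) rather than pure local propagation. Finally, axiom~$4''b$ is not forced during the search; rather, the search targets axiom~$4'b$, and $4''b$ is verified afterward on the explicit output.
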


\subsection{Organization}
Section~\ref{s Noncommutative symmetric functions} gives a more thorough introduction to the theory of noncommutative Schur functions just discussed in \textsection\ref{ss noncomm sym intro}.
In Section~\ref{s D0 graphs and D graphs} we define D$_0$~graphs and the D~graphs of Assaf and relate D$_0$~graphs to the algebra $\U/\Irkst$.
Section~\ref{s Basic properties of D0 graphs} contains some important theoretical results: the full statement and proof of Theorem~\ref{t intro hook} and the corollary that  a D$_0$~graph is a D~graph if and only if it satisfies axioms $4'b$ and 5.
In Section~\ref{s LLT} we show how LLT polynomials can be expressed as the generating functions of certain D$_0$~graphs and relate our setup to Lam's \cite{LamRibbon}
and the graphs $\G^{(k)}_{c,D}$ defined by Assaf in \cite{Sami}.
In Section~\ref{s Linear programming and Schur positivity} we state and prove the full version of Theorem~\ref{t intro linear program 2}.
We next solve a linear program related to Theorem~\ref{t intro linear program 2} in \textsection\ref{ss linear program lltlp}--\ref{ss solution to lltlp}, and in the remainder of Section~\ref{s A D graph whose generating function is not Schur positive}, we use it to construct the D~graph  $\G^*$.
Some facts about this graph are given in \textsection\ref{ss accompanying data files} along with a guide to the accompanying data files giving its full description.
Finally, in Section~\ref{s Concluding remarks} we discuss the difficulties that must be overcome to answer question \eqref{e question} and suggest more precise versions of this question.

\section{The Fomin-Greene theory of noncommutative Schur functions}
\label{s Noncommutative symmetric functions}
D$_0$~graphs are closely tied to the algebra  $\U/\Irkst$.
Here we introduce the theory of noncommutative Schur functions in the algebra  $\U/\Irkst$, expanding on the short account  given in \textsection\ref{ss noncomm sym intro}.
This will be useful later for proving several facts about D$_0$~graphs.
The companion paper \cite{BF} contains a more thorough study of this theory.

\subsection{Noncommutative elementary symmetric and Schur functions}
\label{ss Noncommutative elementary symmetric functions}
The \emph{noncommutative elementary symmetric functions} in subsets of the variables are given by
\[
e_d(S)=\sum_{\substack{i_1 > i_2 > \cdots > i_d \\ i_1,\dots,i_d \in S}}u_{i_1}u_{i_2}\cdots u_{i_d} \, \in \, \U,
\]
for any subset $S$ of $[N]$ and positive integer $d$; set $e_0(S)=1$ and $e_{d}(S) = 0$ for $d<0$.
We also maintain the notation $e_d(\mathbf{u}) = e_d([N])$ from \textsection\ref{ss noncomm sym intro} throughout the paper.

Let $\lambda$ be a partition, let $\lambda'$ be the conjugate partition of  $\lambda$, and let  $t$ be the  number of parts of  $\lambda'$ (which is equal to $\lambda_1$).
Recall from \textsection\ref{ss noncomm sym intro} that the \emph{noncommutative Schur function}  $\mathfrak{J}_\lambda(\mathbf{u})$ is given by the following noncommutative version of the Jacobi-Trudi formula:
\[ \mathfrak{J}_\lambda(\mathbf{u}) = \sum_{\pi\in \S_{t}}
\sgn(\pi) \, e_{\lambda'_1+\pi(1)-1}(\mathbf{u}) e_{\lambda'_2+\pi(2)-2}(\mathbf{u}) \cdots e_{\lambda'_{t}+\pi(t)-t}(\mathbf{u}),\]
considered as an element of~\,$\U$.

\subsection{From words to generating functions}
Let  $\e{w = w_1 \cdots w_n}$ be a word.
We write $\Des(\e{w}) := \{i \in [n-1] \mid \e{w_i > w_{i+1}}\}$ for the \emph{descent set} of $\e{w}$.
Let
\[Q_{\Des(\e{w})}(\mathbf{x}) = \sum_{\substack{1 \le i_1 \le \, \cdots \, \le i_n\\j \in \Des(\e{w}) \implies i_j < i_{j+1} }} x_{i_1}\cdots x_{i_n}\]
be Gessel's \emph{fundamental quasisymmetric function} \cite{GesselPPartition} in the commuting variables $x_1,x_2,\ldots$.
Note that  $Q_{\Des(\e{w})}(\mathbf{x})$ depends on $n$ even though this does not appear in the notation.

Define the linear map
\[\Delta: \U\to \ZZ[x_1, x_2, \dots] \ \text{ by }  \ \e{w} \mapsto Q_{\Des(\e{w})}(\mathbf{x}).\]
The \emph{generating function} of an element $f$ of~\,$\U$ is defined to be $\Delta(f)$. This  is compatible with the definition of the generating function of a D$_0$~graph from the introduction in that the generating function of a D$_0$~graph $\G$ is equal to the generating function of $\sum_{\e{w} \in \ver(\G)} \e{w}$.

\subsection{The Fomin-Greene setup}
\label{ss FG setup}
Let $\langle\cdot ,\cdot  \rangle$ be the symmetric bilinear form on $\U$ for which the monomials form an orthonormal basis.
Note that any element of~\,$\U/I$ has a well-defined pairing with any element of $I^\perp$ for any two-sided ideal  $I$ of~\,$\U$.
Here,
\[I^\perp := \{f \in \U \mid \langle z, f \rangle = 0 \text{ for all }z \in I\}\]
denotes the orthogonal complement of $I$.

We state our variant of Theorem 1.2  of \cite{FG} and results in Section 6 of \cite{LamRibbon}.

\begin{theorem} \label{t basics}
Let $I$ be a two-sided ideal of~\,$\U$.
If $e_k(\mathbf{u})e_\ell(\mathbf{u})=e_\ell(\mathbf{u})e_k(\mathbf{u})$ in \,$\U/I$ for all $k$, $\ell$, then
for any $f\in I^\perp$,
the quasisymmetric function  $\Delta(f)$ is symmetric and its Schur expansion is given by
\vspace{-3pt}
\[\Delta(f) = \sum_{\lambda} s_\lambda(\mathbf x) \langle \mathfrak J_{\lambda}(\mathbf{u}) , f \rangle, \]
\vspace{-3pt}
where the sum is over all partitions $\lambda$.
\end{theorem}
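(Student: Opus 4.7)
The plan is to derive the identity from a noncommutative Cauchy kernel, following the Fomin--Greene strategy. Let $E(z) := \sum_{d \geq 0} e_d(\mathbf{u})\,z^d$ and set
\[\Omega(\mathbf{x}) \ := \ \prod_{i \geq 1}^{\rightarrow} E(x_i) \ \in \ \U[[\mathbf{x}]],\]
where the arrow indicates that the factors are multiplied in increasing order of $i$. I would prove the theorem by expanding $\Omega(\mathbf{x})$ in two different ways and then pairing both expansions with $f$.

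The first expansion is a purely formal Gessel-type P-partition identity. Writing each $E(x_i)$ as an ordered product of factors of the form $(1 + x_i u_j)$ and picking either $1$ or $x_i u_j$ from each factor, every term contributes a word $\e{w} = u_{w_1}\cdots u_{w_n}$ together with a weakly increasing row-assignment $i_1 \leq \cdots \leq i_n$, in which the equality $i_p = i_{p+1}$ is allowed exactly at the positions dictated by the order of factors within a single $E(x_i)$. A short combinatorial check identifies the resulting coefficient of $\e{w}$ with Gessel's fundamental quasisymmetric function, giving
\[\Omega(\mathbf{x}) \ = \ \sum_{\e{w}} Q_{\Des(\e{w})}(\mathbf{x})\,\e{w} \quad \text{in } \U[[\mathbf{x}]].\]
The second expansion works in $(\U/I)[[\mathbf{x}]]$. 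By hypothesis the $e_d(\mathbf{u})$ commute pairwise modulo $I$, so the power series $E(x_i)$ also pairwise commute in $(\U/I)[[\mathbf{x}]]$, and the classical algebraic proof of the dual Jacobi--Trudi identity --- which uses only pairwise commutativity of the $e_d$'s, via a determinantal manipulation or a sign-reversing involution --- goes through verbatim to yield
\[\Omega(\mathbf{x}) \ \equiv \ \sum_\lambda s_\lambda(\mathbf{x})\,\mathfrak{J}_\lambda(\mathbf{u}) \pmod{I[[\mathbf{x}]]}.\]
The left-to-right ordering in the definition of $\mathfrak{J}_\lambda(\mathbf{u})$ is arranged precisely so this classical derivation survives in $\U/I$.

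To finish, I would pair both expansions with $f \in I^\perp$. Since $f$ annihilates $I$, the pairing $\langle\,\cdot\,, f\rangle$ descends to $\U/I$, and applying it to the second expansion gives $\langle \Omega(\mathbf{x}), f\rangle = \sum_\lambda s_\lambda(\mathbf{x})\langle \mathfrak{J}_\lambda(\mathbf{u}), f\rangle$. Applying it to the first expansion and using orthonormality of the monomial basis gives $\langle \Omega(\mathbf{x}), f\rangle = \sum_{\e{w}} Q_{\Des(\e{w})}(\mathbf{x})\langle \e{w}, f\rangle = \Delta(f)$. Equating the two completes the proof. The main obstacle is the Jacobi--Trudi step: one must carefully match the ordering of factors in the definition of $\mathfrak{J}_\lambda(\mathbf{u})$ with the factor ordering in $\Omega(\mathbf{x})$ so that the standard commutative argument really does run inside the noncommutative quotient $\U/I$; the P-partition expansion itself is independent of $I$ and requires no special argument.
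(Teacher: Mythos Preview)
Your Cauchy-kernel strategy is exactly the Fomin--Greene argument the paper is invoking (the paper itself only gives a one-line sketch and defers the details to \cite{BF}), so in spirit you are doing the same thing. However, both of your intermediate expansions are off by a transpose, and it is only because the two errors cancel that you land on the right answer.

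Concretely: with $e_d(\mathbf{u})=\sum_{i_1>\cdots>i_d}u_{i_1}\cdots u_{i_d}$ one has $E(x)=(1+xu_N)\cdots(1+xu_1)$, so within a single factor $E(x_i)$ the letters come out \emph{strictly decreasing}. Hence in $\Omega(\mathbf{x})=\prod_i^{\rightarrow}E(x_i)$ the equality $i_p=i_{p+1}$ is allowed precisely when $w_p>w_{p+1}$, and the coefficient of $\e{w}$ is $Q_{[n-1]\setminus\Des(\e{w})}(\mathbf{x})$, not $Q_{\Des(\e{w})}(\mathbf{x})$. Likewise, the kernel you wrote is the \emph{dual} Cauchy kernel $\prod_{i,j}(1+x_i y_j)$, whose symmetric-function expansion is $\sum_\lambda s_\lambda(\mathbf{x})\,s_{\lambda'}(\mathbf{y})$; transporting this through $e_d(\mathbf{y})\mapsto e_d(\mathbf{u})$ gives $\Omega(\mathbf{x})\equiv\sum_\lambda s_{\lambda'}(\mathbf{x})\,\mathfrak{J}_\lambda(\mathbf{u})$ rather than $\sum_\lambda s_\lambda(\mathbf{x})\,\mathfrak{J}_\lambda(\mathbf{u})$. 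Pairing with $f$ and then applying the involution $\omega$ in the $\mathbf{x}$-variables (using $\omega\,Q_{[n-1]\setminus D}=Q_D$ and $\omega\,s_{\lambda'}=s_\lambda$) repairs both at once and yields the stated theorem. So your outline is sound, but you should either fix the two expansions and insert the $\omega$ step, or switch to the kernel $\prod_i^{\rightarrow}H(x_i)$ with $H(x)=\sum_d h_d(\mathbf{u})x^d$, which genuinely gives $\sum_{\e{w}}Q_{\Des(\e{w})}(\mathbf{x})\,\e{w}$ directly and expands as $\sum_\lambda s_\lambda(\mathbf{x})\,\mathfrak{J}_\lambda(\mathbf{u})$ once you know the $h_d(\mathbf{u})$ commute in $\U/I$ (which follows from the $e_d$'s commuting, as in Lemma~\ref{l subalgebra h}).
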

The idea of the proof is that whenever the $e_d(\mathbf{u})$ commute in some quotient of
$\U$, the subalgebra they generate is the surjective image of the ring of symmetric functions in commuting variables and hence all the usual identities hold, including the Jacobi-Trudi formula. See \cite{BF} for details.

The following lemma is similar to \cite[Lemma 3.1]{FG}.  In \cite{BF}, we will prove a result that generalizes both the lemma below and that in \cite{FG}. Recall that  $\U/\Irkst$ is the quotient of~\,$\U$ by the relations \eqref{e Irkst1}--\eqref{e Irkst2}.
\begin{lemma}\label{l es commute}
There holds $e_k(\mathbf{u})e_\ell(\mathbf{u})=e_\ell(\mathbf{u})e_k(\mathbf{u})$ in the algebra  $\U/\Irkst$, for all  $k$,  $\ell$.
\end{lemma}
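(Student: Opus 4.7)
My plan is to adapt the strategy of \cite[Lemma 3.1]{FG} to the (4-term) defining relation of $\Irkst$. The goal is to show directly that $e_i(\mathbf{u})e_j(\mathbf{u}) - e_j(\mathbf{u})e_i(\mathbf{u}) \in \Irkst$.

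First, I would use that any word with a repeated letter lies in $\Irkst$ to restrict to the multilinear part. Expanding both products and grouping terms by their combined letter set $L \subseteq [N]$ of size $i+j$, I obtain
\[
e_i(\mathbf{u})e_j(\mathbf{u}) - e_j(\mathbf{u})e_i(\mathbf{u}) \equiv \sum_{L} \sum_{\substack{A \subseteq L \\ |A|=i}}\bigl(\text{dec}(A)\,\text{dec}(L\setminus A) - \text{dec}(L\setminus A)\,\text{dec}(A)\bigr) \pmod{\Irkst},
\]
where $\text{dec}(S)$ denotes the strictly decreasing listing of $S$; the second product comes from $e_j(\mathbf{u})e_i(\mathbf{u})$ by complementing $A$ inside $L$. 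Since the relations defining $\Irkst$ depend only on the relative order of letters, it suffices to prove each inner sum lies in $\Irkst$ after relabelling $L = [n]$ with $n=i+j$.

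Second, I would prove the inner claim by induction on $n$. The cases $n \le 2$ are immediate. For the inductive step, I would separate terms by the location of the largest letter $n$: when $n \in A$ (respectively $n \in L\setminus A$), the symbol $u_n$ appears at the beginning of $\text{dec}(A)$ (respectively of $\text{dec}(L\setminus A)$), and hence in the two products $\text{dec}(A)\,\text{dec}(L\setminus A)$ and $\text{dec}(L\setminus A)\,\text{dec}(A)$ the letter $u_n$ is either at position $1$, at position $i+1$ or $j+1$ (wedged between the two decreasing runs), or at position $n$. The boundary contributions factor out $u_n$ and reduce to the inductive hypothesis on $[n-1]$. For the interior contributions, I would iterate the defining relation $\e{b}(\e{ac}-\e{ca})\equiv(\e{ac}-\e{ca})\e{b}\pmod{\Irkst}$ to transport $u_n$ to the nearest end of the word, producing a telescoping chain of corrections.

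The main obstacle is that $\Irkst$ is generated by a \emph{four}-term identity rather than the two 2-term Knuth relations used by Fomin--Greene in the plactic setting. Consequently, monomials cannot be paired one-to-one; they must be bundled into quadruples $\{\e{vbacw},\e{vbcaw},\e{vacbw},\e{vcabw}\}$ whose signed sum vanishes modulo $\Irkst$, and each step of the commutation of $u_n$ past a decreasing run generates several such quadruples at once. The delicate bookkeeping is to verify that when these corrections are summed over all choices of $A \subseteq L$, each intermediate word cancels, leaving only terms manifestly in $\Irkst$. Since a more general statement subsuming this lemma is to be developed in the companion paper \cite{BF}, one option is to carry out the bookkeeping explicitly here; another is to verify the lemma for small $n$ (the $n=3$ case, as computed above for $i=1,j=2$, reduces directly to a single generator of $\Irkst$) and defer the general pattern to \cite{BF}.
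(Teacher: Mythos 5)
Your proposal takes a genuinely different route from the paper, but as written it contains a gap at the crucial step, and I believe the specific mechanism you describe cannot work directly.

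The paper closely mirrors \cite[Lemma 3.1]{FG}: it introduces the generating functions $E_{j,i}(x)=(1+xu_j)(1+xu_{j-1})\cdots(1+xu_i)$ and proves by induction on $j-i$ that $E_{j,i}(x)$ and $E_{j,i}(y)$ commute modulo $\Irkst$. The two defining relations appear only in the last two steps of a telescoping computation: \eqref{e Irkst2} kills terms with a repeated $u_i$ or $u_j$, and \eqref{e Irkst1} is used in the form ``the commutator $[u_j,u_i]$ commutes past any $u_k$ with $i<k<j$.'' That last point is the key structural fact: in the relation $\e{b(ac-ca)}=\e{(ac-ca)b}$ it is the \emph{middle} letter $b$ that moves, while the outer pair $a,c$ travels only as the bound block $u_cu_a - u_au_c$.

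This is exactly where your sketch breaks down. You propose to isolate the \emph{largest} letter $u_n$ and ``iterate the defining relation \ldots{} to transport $u_n$ to the nearest end of the word.'' But $u_n$, being maximal, can only play the role of $c$ in $a<b<c$; the generator of $\Irkst$ never lets you slide $c$ past a neighbouring letter by itself. What the relation allows is moving a letter of intermediate value past the commutator of two letters that straddle it, not moving an extremal letter at all. So the telescoping chain you describe in the inductive step does not get off the ground, and this is not merely unwritten bookkeeping but a wrong choice of mobile element. Beyond that, you explicitly leave the cancellation argument unverified (``one option is to carry out the bookkeeping \ldots{} another is to \ldots{} defer the general pattern to \cite{BF}''), which is not acceptable as a proof of the lemma in this paper. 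The multilinear reduction and the grouping by letter set $L$ are both fine and standard; if you want a direct combinatorial proof rather than the generating-function one, you would need to reorganize the induction around a commutator block $[u_c,u_a]$ that migrates past intermediate letters, which is, in effect, re-deriving the Fomin--Greene computation term-by-term.
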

\vspace{-11pt}
\begin{proof}
We closely follow the proof of \cite[Lemma 3.1]{FG}, with only a few key modifications.
For $i\leq j$, let
\vspace{-3pt}
\[
E_{j,i}(x)=(1+xu_j)(1+xu_{j-1})\cdots(1+xu_i)=\sum_{d=0}^{j-i+1}x^d e_d(\{i, i+1, \ldots, j\}),
\]
\vspace{-3pt}
where $x$ commutes with all of the $u_i$. Define $E_{j,i}(x)=1$ if $i=j+1$.

The statement of the lemma is equivalent to saying that $E_{N,1}(x)$ and $E_{N,1}(y)$ commute,
where $x$ and $y$ are scalar variables. We will prove that $E_{j,i}(x)$ and $E_{j,i}(y)$ commute by induction on
$j-i$.

{\allowdisplaybreaks
The cases $j-i=-1$ and $j-i=0$ are clear. Now assume $i<j$. Let $[u_i,u_j]$ denote the commutator $u_iu_j-u_ju_i$.
Using the induction hypothesis, we compute
\begin{align*}
E_{j,i}&(x)E_{j,i}(y)\\
&=E_{j, i+1}(x)(1+x u_i)(1+yu_j)E_{j-1,i}(y)\\
&=E_{j, i+1}(x)(1+yu_j)(1+x u_i)E_{j-1, i}(y)+xyE_{j, i+1}(x)[u_i, u_j]E_{j-1, i}(y)\\
&=E_{j, i+1}(x)E_{j, i+1}(y)(E_{j-1, i+1}(y))^{-1}(E_{j-1, i+1}(x))^{-1}E_{j-1, i}(x)E_{j-1, i}(y)\\
&\quad+xy E_{j, i+1}(x)[u_i, u_j]E_{j-1, i}(y)\\
&= E_{j, i+1}(y)E_{j, i+1}(x)(E_{j-1, i+1}(x))^{-1}(E_{j-1, i+1}(y))^{-1}E_{j-1, i}(y)E_{j-1, i}(x)\\
&\quad+xy E_{j, i+1}(x)[u_i, u_j]E_{j-1, i}(y)\\
&=E_{j, i+1}(y)(1+xu_j)(1+yu_i)E_{j-1, i}(x)+xy E_{j, i+1}(x)[u_i, u_j]E_{j-1, i}(y)\\
&=E_{j, i+1}(y)(1+yu_i)(1+xu_j)E_{j-1, i}(x)\\
&\quad + xy E_{j, i+1}(y)[u_j, u_i]E_{j-1, i}(x)
+xy E_{j, i+1}(x)[u_i, u_j]E_{j-1, i}(y)\\
&=E_{j, i}(y)E_{j, i}(x) + xy E_{j-1, i+1}(y)[u_j, u_i]E_{j-1, i+1}(x)
+xy E_{j-1, i+1}(x)[u_i, u_j]E_{j-1, i+1}(y)\\
&=E_{j, i}(y)E_{j, i}(x) + xy [u_j, u_i] E_{j-1, i+1}(y)E_{j-1, i+1}(x)
+xy [u_i, u_j] E_{j-1, i+1}(x)E_{j-1, i+1}(y)\\
&=E_{j, i}(y)E_{j, i}(x).
\end{align*}
The third to last equality is by \eqref{e Irkst2} and the second to last equality is by \eqref{e Irkst1}.
}
\end{proof}
\pagebreak[3]

In \textsection\ref{ss D0 graphs}, we will discuss consequences of Theorem~\ref{t basics} and Lemma \ref{l es commute} for D$_0$~graphs.
The heart of the Fomin-Greene approach is to express  $\mathfrak{J}_\lambda(\mathbf{u})$ as a positive sum of monomials in $\U/I$ for  $I$ ``as small as possible''
because this yields (via Theorem~\ref{t basics}) a positive combinatorial formula for $\Delta(f)$ for any positive sum of monomials  $f$ lying in  $I^\perp$.

\subsection{Bijectivizations}
\label{ss bijectivizations}
For the ideal  $I = \Irkst$, the condition  $f \in I^\perp$ from Theorem~\ref{t basics} is unintuitive, and the problem of expressing $\mathfrak{J}_\lambda(\mathbf{u})$ as a positive sum of monomials in  $\U/I$ is computationally difficult.
It is therefore fruitful to first consider the easier condition and problem for
bijectivizations of~\,$\U/\Irkst$.

\begin{definition}
A \emph{bijectivization} of an algebra $\U/I$ is an algebra $\U/J$ such that $I \subseteq J$ and
$J$ is generated by monomials (i.e. words in  $\U$) and binomials of the form $\e{v}-\e{w}$, where  $\e{v}$ and $\e{w}$ are words of the same length.
To any bijectivization  $\U/J$, we associate the following equivalence relation:
two words $\e{v}$ and $\e{w}$ of~\,$\U$ are \emph{equivalent} if $\e{v}-\e{w} \in J$.
We also say that a word is \emph{nonzero} if it does not belong to $J$ and that an equivalence class of words is \emph{nonzero} if all of its elements are nonzero.
\end{definition}

If $J$ is as in the previous definition, then the space  $J^\perp$ has $\ZZ$-basis  $\{\sum_{\e{w} \in C}\e{w}\}$, where  $C$ ranges over the nonzero equivalence classes of~\,$\U/J$.
Moreover,  the problem of expressing $\mathfrak{J}_\lambda(\mathbf{u})$ as a positive sum of monomials in  $\U/J$ can be made explicit as follows:
let  $\e{w^1, \dots, w^m}$ denote the words of~\,$\U$ of length  $n$.  For each partition  $\lambda$ of  $n$, we can write
$\mathfrak{J}_\lambda(\mathbf{u}) = \sum_i c_i \e{w^i}, \ c_i \in \ZZ$ (an equality in  $\U$).
Then $\mathfrak{J}_\lambda(\mathbf{u})$ is a positive sum of monomials in  $\U/J$ if and only if $\sum_{\e{w^i} \in C}c_i \ge 0$ for each nonzero equivalence class $C$ of~\,$\U/J$.

\begin{example}
\label{ex plac}
Let $\U/\Iplacst$ be the quotient of the plactic algebra by the additional relation \eqref{e Irkst2}, i.e. the quotient of~\,$\U$ by \eqref{e Irkst2} and the Knuth relations
\begin{align*}
&\e{acb} = \e{cab},\\
&\e{bac} = \e{bca},
\end{align*}
for letters $a<b<c$. Then $\U/\Iplacst$ is a bijectivization of~\,$\U/\Irkst$.
The nonzero equivalence classes are the Knuth equivalence classes consisting of words with no repeated letter.

Another bijectivization of~\,$\U/\Irkst$ is the quotient of~\,$\U$ by \eqref{e Irkst2} and the relations
\begin{align*}
&\e{acb} = \e{bac},\\
&\e{cab} = \e{bca},
\end{align*}
corresponding to rotation edges.
This algebra was studied by Novelli and Schilling in \cite{NS}.
It follows easily from their results that two words  $\e{v}, \e{w} \in \W_n$
are equivalent if and only if they have the same number of inversions, they are permutations of the same set of letters, and
 $\min(\e{v})$ and  $\max(\e{v})$ occur in the same order in both words.
\end{example}

\begin{example}
\label{ex triple biject}
Here is a rich source of examples of bijectivizations of~\,$\U/\Irkst$.
For each subset  $S \subseteq [N]$ of size 3, declare it to be either a \emph{Knuth triple} or a  \emph{rotation triple}.
The  \emph{triples bijectivization} of~\,$\U/\Irkst$ associated to this data is the quotient of~\,$\U$ by the relations
\[\begin{array}{ll}
\e{acb} = \e{cab}& \text{ if $a < b < c$ is a Knuth triple}  \\
\e{bac} = \e{bca}& \text{ if $a < b < c$ is a Knuth triple} \\
\e{acb} = \e{bac}& \text{ if $a < b < c$ is a rotation triple}  \\
\e{cab} = \e{bca}& \text{ if $a < b < c$ is a rotation triple} \\
\e{w} = 0 & \text{ for words $\e{w}$ with a repeated letter.}
\end{array}\]
\end{example}

\begin{remark}
Extensive computer experimentation supports the following statement: 
the symmetric function $\Delta(\sum_{\e{w}\in C}\e{w})$ is Schur positive
for any nonzero equivalence class  $C$ of any triples bijectivization.
However, no proof of this is known.
See Section~\ref{s Concluding remarks} for further discussion.
\end{remark}

In Section~\ref{s LLT}, we define certain bijectivizations of~\,$\U/\Irkst$ to apply the Fomin-Greene approach to LLT polynomials and certain graphs related to LLT polynomials from \cite{Sami}.

\subsection{Automorphisms of~\,$\U/\Irkst$}
\label{ss automorphisms}
Certain natural maps on  $\U$ induce well-defined maps on the quotient $\U/\Irkst$.
Similar results are proven in \cite{LS} for the plactic monoid.
For a word $\e{w = w_1 w_2 \cdots w_n}$, let $\rev(\e{w})$ denote the word $\e{w_n w_{n-1} \cdots w_1}$.
\begin{proposition}\label{p order preserving Irk only}
\
\begin{list}{\emph{(\roman{ctr})}}{\usecounter{ctr} \setlength{\itemsep}{2pt} \setlength{\topsep}{3pt}}
\item Let $\theta: [N] \to \ZZ$ be an order-preserving injection and let $\theta_\U: \U \to \U$, $u_i \mapsto u_{\theta(i)}$ be the corresponding algebra homomorphism, where we set  $u_i = 0$ if  $i \notin [N]$.
Then $\theta_\U(\Irkst) \subseteq \Irkst$ and hence  $\theta_\U$ induces an algebra homomorphism $\U/\Irkst \to \U/\Irkst$.
\item The analog of statement (i) holds for $\theta$ an order-reversing injection.
\item Let $\rev : \U \to \U$ denote the algebra anti-automorphism given by $\e{w} \mapsto \rev(\e{w})$.
Then $\rev(\Irkst) = \Irkst$ and hence  $\rev$ induces an algebra anti-automorphism $\U/\Irkst \to \U/\Irkst$.
\end{list}
\end{proposition}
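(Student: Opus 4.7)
The plan is to verify the claimed inclusions by checking the generators of $\Irkst$, then use the fact that algebra (anti-)homomorphisms carry two-sided ideals to the two-sided ideals generated by the images of the generators. Recall that $\Irkst$ is generated as a two-sided ideal by the elements
\[
g_{a,b,c} := \e{b(ac-ca)} - \e{(ac-ca)b} \quad (a<b<c \text{ in } [N]),
\qquad \e{w} \quad (\e{w} \text{ has a repeated letter}).
\]
So for each of the three maps in question, it suffices to verify that the image of every such generator lies in $\Irkst$. The contents of (i)--(iii) then follow immediately: (i) and (ii) follow since $\theta_\U$ is an algebra homomorphism, and for (iii) the inclusion $\rev(\Irkst) \subseteq \Irkst$ combined with $\rev \circ \rev = \mathrm{id}_\U$ gives equality.

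First I would handle (i). Since $\theta$ is injective, a word $\e{w}$ with a repeated letter is sent by $\theta_\U$ either to $0$ (if some $\theta(i)$ falls outside $[N]$) or to a word with a repeated letter; either way the image is in $\Irkst$. For $g_{a,b,c}$, if any of $\theta(a), \theta(b), \theta(c)$ lies outside $[N]$ then $\theta_\U(g_{a,b,c}) = 0$; otherwise, since $\theta$ is order-preserving and injective, $\theta(a) < \theta(b) < \theta(c)$ and $\theta_\U(g_{a,b,c}) = g_{\theta(a),\theta(b),\theta(c)} \in \Irkst$.

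For (ii), the repeated-letter generators are handled exactly as in (i). For $g_{a,b,c}$, again we may assume $\theta(a), \theta(b), \theta(c) \in [N]$, and now $\theta(c) < \theta(b) < \theta(a)$. Setting $A = \theta(c),\, B = \theta(b),\, C = \theta(a)$, a direct calculation gives
\[
\theta_\U(g_{a,b,c}) \;=\; \e{B(CA - AC)} - \e{(CA-AC)B} \;=\; -\,g_{A,B,C} \;\in\; \Irkst.
\]

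For (iii), I would observe that since $\rev$ is an algebra anti-automorphism, $\rev(\Irkst)$ is the two-sided ideal generated by the $\rev$-images of the generators; hence it suffices to show each such image lies in $\Irkst$. Reversing a word with a repeated letter again yields a word with a repeated letter, so those generators are handled. For $g_{a,b,c} = \e{bac} - \e{bca} - \e{acb} + \e{cab}$, applying $\rev$ term by term yields
\[
\rev(g_{a,b,c}) \;=\; \e{cab} - \e{acb} - \e{bca} + \e{bac} \;=\; g_{a,b,c},
\]
so $\rev$ actually fixes this generator. Thus $\rev(\Irkst) \subseteq \Irkst$, and the opposite inclusion follows from $\rev^2 = \mathrm{id}$, giving $\rev(\Irkst) = \Irkst$.

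There is no serious obstacle here: the only mild subtlety is bookkeeping the convention $u_i = 0$ for $i \notin [N]$ in parts (i) and (ii), and recognizing in part (iii) that the binomial generator $g_{a,b,c}$ is symmetric enough under word reversal to be preserved on the nose. The proof is essentially a verification on the two families of generators.
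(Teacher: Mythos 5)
Your proof is correct and takes essentially the same approach as the paper: verify the claimed containments on the generators of $\Irkst$ and invoke the fact that algebra (anti-)homomorphisms carry two-sided ideals appropriately. You are slightly more explicit than the paper in spelling out the repeated-letter generators and the relabeling $A=\theta(c)$, $B=\theta(b)$, $C=\theta(a)$ in part (ii), but the substance is identical, including the term-by-term check that $\rev$ fixes the binomial generator in part (iii).
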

\begin{proof}
Statement (i) is clear since for an order-preserving injection $\theta$ and any  $1 \le a < b < c \le N$, there holds
\begin{align}
\theta_\U\big(\e{b(ac-ca)} - \e{(ac-ca)b}\big) = \e{b'(a'c'-c'a')} - \e{(a'c'-c'a')b'}  \in \Irkst,  \label{e order pr}
\end{align}
for some $a' < b' < c'$.  Note that there is no problem if $a', b', c'$ do not all lie in $[N]$ since if, say,
$c' \notin [N]$  then  $\e{c'} := u_{c'} = 0$, so the right side of \eqref{e order pr} is 0.

If  $\theta$ is an order-reversing injection, then for any $1 \le a < b < c \le N$,
\[\theta_\U\big(\e{b(ac-ca)} - \e{(ac-ca)b}\big) = \e{b'(c'a'-a'c')} - \e{(c'a'-a'c')b'} = -\big(\e{b'(a'c'-c'a')} - \e{(a'c'-c'a')b'}\big) \in \Irkst\]
for some $a' < b' < c'$.
To verify (iii), we compute
\[\rev\big(\e{b(ac-ca)} - \e{(ac-ca)b}\big) = \e{(ca-ac)b} - \e{b(ca-ac)} = \e{b(ac-ca)} - \e{(ac-ca)b} \in \Irkst. \qedhere\]
\end{proof}

\section{D$_0$~graphs and D~graphs}
\label{s D0 graphs and D graphs}
Here we define  D$_0$~graphs differently than in the introduction and relate them to the algebra
 $\U/\Irkst$.
We then recall the axioms defining a D~graph from \cite{Sami}.

\subsection{D$_0$~graphs}
\label{ss D0 graphs}
Let  $\W\subseteq \U$ denote the set of words with no repeated letter, let $\W_n \subseteq \W$ denote the set of words of length  $n$ with no repeated letter, and
let $\S_n$ denote the set of permutations of  $n$. We write  $|\e{v}|$ for the length of a word  $\e{v}$.

A \emph{KR square} is a 4-tuple of elements of  $\W$ of the form
\begin{align}
\e{v\stars w} =  \big(\e{v \, bac \, w}, \e{v \, bca \, w}, \e{v \, acb \, w}, \e{v \, cab \, w}\big),\label{e KR4tuple}
\end{align}
where  $a < b < c$ are letters and $\e{v}$ and $\e{w}$ are words. We also say \eqref{e KR4tuple} is a KR$_i$ square, for $i=|\e{v}|+2$.
We abuse notation and identify the tuple  $\e{v} \stars \e{w}$ with its set of elements when convenient.
We say that  a KR square $X$ is a \emph{KR square of  $\W_n$} or a \emph{KR square of  $\S_n$} if $X \subseteq \W_n$ or  $X \subseteq \S_n$, respectively.

\begin{definition}\label{d D0 graph}
A \emph{partial D$_0$~graph of degree  $n$} is a graph $\G$ on a vertex set $W \subseteq \W_n$
with edges colored from the set  $\{2,3,\dots,n-1\}$ such that
for each KR$_i$ square $X = \e{v} \stars \e{w}$,
exactly one of the following possibilities occurs:
\begin{list}{{(\roman{ctr})}}{\usecounter{ctr} \setlength{\itemsep}{2pt} \setlength{\topsep}{3pt}}
\item \ \ $X \cap W = \varnothing,$
\item \ $X \cap W$ is equal to $\{\e{v \, bac \, w}, \e{v \, bca \, w}\}$ or  $\{\e{v \, acb \, w}, \e{v \, cab \, w}\}$ and is an  $i$-edge,
\item $X \cap W$ is equal to $\{\e{v \, bac \, w}, \e{v \, acb \, w}\}$ or  $\{\e{v \, bca \, w}, \e{v \, cab \, w}\}$ and is an  $i$-edge,
\item $X \subseteq W$ and $\{\e{v \, bac \, w}, \e{v \, bca \, w}\}$ and $\{\e{v \, acb \, w}, \e{v \, cab \, w}\}$ are  $i$-edges,
\item  \ $X \subseteq W$ and $\{\e{v \, bac \, w}, \e{v \, acb \, w}\}$ and $\{\e{v \, bca \, w}, \e{v \, cab \, w}\}$ are  $i$-edges,
\item $X \subseteq W$ and no edge has both ends in $X$.
\end{list}
If (vi) never occurs, then we say that  $\G$ is a \emph{D$_0$~graph of degree  $n$}.

Recall from Definition \ref{d intro D0 graph} that an edge $\{\e{x,y}\}$ of  $\G$ is a \emph{Knuth edge} (resp. \emph{rotation edge})
if $\{\e{x,y}\}$ is as in (ii) and (iv) (resp. (iii) and (v)); we call this the \emph{type} of the edge so that each edge has two kinds of labels---color and type;
we can then speak of the type of the $i$-edge at  $\e{x}$, the color of a Knuth edge at  $\e{x}$, the rotation $j$-edges of  $\G$, etc.;
a Knuth $i$-edge (resp. rotation  $i$-edge) is denoted  $i$ (resp.  $\tilde{i}$) in figures.
Note that the type and color of an edge  $\{\e{x,y}\}$
can be recovered from the vertex labels $\e{x,y}$, with one exception: if $\e{x = v ac w, \ y = v ca w}$, then $\{\e{x,y}\}$ has type Knuth and color either  $|\e{v}|+1$ or  $|\e{v}|+2$.
\end{definition}
In agreement with standard terminology (see, e.g. \cite[A1]{St}), a \emph{Knuth transformation} is a transformation of the form  $\e{x} \tto \e{y}$ whenever  $\{\e{x,y}\}$ is as in (ii) and (iv) above.  We define a \emph{rotation transformation} to be a transformation of the form  $\e{x} \tto \e{y}$ whenever  $\{\e{x,y}\}$ is as in (iii) and (v) above.
These transformations are related to the  $d_i$ and  $\widetilde{d_i}$ of \cite{Sami} as follows:
for permutations  $\e{x}$ and $\e{y}$, the transformation
$\e{x} \tto \e{y}$ is a Knuth (resp. rotation) transformation at positions  $i-1, i, i+1$ if and only if $d_i(\e{x}^{-1}) = \e{y}^{-1}$  (resp. $\widetilde{d_i}(\e{x}^{-1}) = \e{y}^{-1}$).

Examples of  D$_0$~graphs are given in Figures \ref{f intro}, \ref{f D graph k3}, and \ref{f permutation D graph 32+311}.

Given a partial D$_0$~graph $G$ with vertex set  $W$, a \emph{KR square of  $G$} is just a KR square, but has the following extra data associated to it.
The \emph{type} of  a KR square $X = \e{v} \stars \e{w}$ of  $G$ is
\[
\left\{
\begin{array}{lll}
\text{\emph{Irrelevant}} & (\varnothing) &\text{ if $X \cap W = \varnothing$},\\
\text{\emph{Undetermined}} & (0) &\text{ if $X \cap W = X$ and no edge of  $G$ has both ends in $X$},\\
\text{\emph{Knuth}} & (\text{K})&\text{ if }\{\e{v \, bac \, w}, \e{v \, bca \, w}\}$ or $\{\e{v \, acb \, w}, \e{v \, cab \, w}\}\text{ (or both) are edges of }G,\\
\text{\emph{Rotation}} & (\text{R})&\text{ if }\{\e{v \, bac \, w}, \e{v \, acb \, w}\}$ or $\{\e{v \, bca \, w}, \e{v \, cab \, w}\}\text{ (or both) are edges of }G.
\end{array}
\right.
\]

\begin{figure}
 \begin{tikzpicture}[xscale = 2.2,yscale = 3.1]
\tikzstyle{vertex}=[inner sep=0pt, outer sep=4pt]
\tikzstyle{aedge} = [draw, thin, ->,black]
\tikzstyle{edge} = [draw, thick, -,black]
\tikzstyle{dashededge} = [draw, very thick, dashed, black]
\tikzstyle{LabelStyleH} = [text=black, anchor=south]
\tikzstyle{LabelStyleV} = [text=black, anchor=east]

 \node[vertex] (v1) at (1,1){\footnotesize$\e{1234 }$};

 \end{tikzpicture}
         \vspace{.3in}

 \begin{tikzpicture}[xscale = 2.2,yscale = 3.1]
\tikzstyle{vertex}=[inner sep=0pt, outer sep=4pt]
\tikzstyle{aedge} = [draw, thin, ->,black]
\tikzstyle{edge} = [draw, thick, -,black]
\tikzstyle{dashededge} = [draw, very thick, dashed, black]
\tikzstyle{LabelStyleH} = [text=black, anchor=south]
\tikzstyle{LabelStyleV} = [text=black, anchor=east]

 \node[vertex] (v1) at (3,1){\footnotesize$\e{1243 }$};
 \node[vertex] (v2) at (2,1){\footnotesize$\e{1324 }$};
 \node[vertex] (v3) at (1,1){\footnotesize$\e{2134 }$};
 \draw[edge] (v1) to node[LabelStyleH]{\Tiny$\tilde{3} $} (v2);
 \draw[edge] (v2) to node[LabelStyleH]{\Tiny$\tilde{2} $} (v3);

 \end{tikzpicture}
         \vspace{.3in}

 \begin{tikzpicture}[xscale = 2.2,yscale = 3.1]
\tikzstyle{vertex}=[inner sep=0pt, outer sep=4pt]
\tikzstyle{aedge} = [draw, thin, ->,black]
\tikzstyle{edge} = [draw, thick, -,black]
\tikzstyle{dashededge} = [draw, very thick, dashed, black]
\tikzstyle{LabelStyleH} = [text=black, anchor=south]
\tikzstyle{LabelStyleV} = [text=black, anchor=east]

 \node[vertex] (v1) at (5,1){\footnotesize$\e{1342 }$};
 \node[vertex] (v2) at (4,1){\footnotesize$\e{1423 }$};
 \node[vertex] (v3) at (3,1){\footnotesize$\e{2143 }$};
 \node[vertex] (v4) at (2,1){\footnotesize$\e{2314 }$};
 \node[vertex] (v5) at (1,1){\footnotesize$\e{3124 }$};
 \draw[edge] (v1) to node[LabelStyleH]{\Tiny$\tilde{3} $} (v2);
 \draw[edge] (v2) to node[LabelStyleH]{\Tiny$\tilde{2} $} (v3);
 \draw[edge] (v3) to node[LabelStyleH]{\Tiny$\tilde{3} $} (v4);
 \draw[edge] (v4) to node[LabelStyleH]{\Tiny$\tilde{2} $} (v5);

 \end{tikzpicture}
         \vspace{.3in}

 \begin{tikzpicture}[xscale = 2.2,yscale = 3.1]
\tikzstyle{vertex}=[inner sep=0pt, outer sep=4pt]
\tikzstyle{aedge} = [draw, thin, ->,black]
\tikzstyle{edge} = [draw, thick, -,black]
\tikzstyle{dashededge} = [draw, very thick, dashed, black]
\tikzstyle{LabelStyleH} = [text=black, anchor=south]
\tikzstyle{LabelStyleV} = [text=black, anchor=east]

 \node[vertex] (v1) at (1,1){\footnotesize$\e{1432 }$};
 \node[vertex] (v2) at (2,1){\footnotesize$\e{3142 }$};
 \node[vertex] (v3) at (3,1){\footnotesize$\e{3214 }$};
 \draw[edge] (v1) to node[LabelStyleH]{\Tiny$\tilde{2} $} (v2);
 \draw[edge] (v2) to node[LabelStyleH]{\Tiny$\tilde{3} $} (v3);

 \end{tikzpicture}
         \hspace{.3in}
 \begin{tikzpicture}[xscale = 2.2,yscale = 3.1]
\tikzstyle{vertex}=[inner sep=0pt, outer sep=4pt]
\tikzstyle{aedge} = [draw, thin, ->,black]
\tikzstyle{edge} = [draw, thick, -,black]
\tikzstyle{dashededge} = [draw, very thick, dashed, black]
\tikzstyle{LabelStyleH} = [text=black, anchor=south]
\tikzstyle{LabelStyleV} = [text=black, anchor=east]

 \node[vertex] (v1) at (3,1){\footnotesize$\e{2341 }$};
 \node[vertex] (v2) at (2,1){\footnotesize$\e{2413 }$};
 \node[vertex] (v3) at (1,1){\footnotesize$\e{4123 }$};
 \draw[edge] (v1) to node[LabelStyleH]{\Tiny$\tilde{3} $} (v2);
 \draw[edge] (v2) to node[LabelStyleH]{\Tiny$\tilde{2} $} (v3);
 \end{tikzpicture}
         \vspace{.3in}

 \begin{tikzpicture}[xscale = 2.2,yscale = 3.1]
\tikzstyle{vertex}=[inner sep=0pt, outer sep=4pt]
\tikzstyle{aedge} = [draw, thin, ->,black]
\tikzstyle{edge} = [draw, thick, -,black]
\tikzstyle{dashededge} = [draw, very thick, dashed, black]
\tikzstyle{LabelStyleH} = [text=black, anchor=south]
\tikzstyle{LabelStyleV} = [text=black, anchor=east]

 \node[vertex] (v1) at (1,1){\footnotesize$\e{2431 }$};
 \node[vertex] (v2) at (2,1){\footnotesize$\e{3241 }$};
 \node[vertex] (v3) at (3,1){\footnotesize$\e{3412 }$};
 \node[vertex] (v4) at (4,1){\footnotesize$\e{4132 }$};
 \node[vertex] (v5) at (5,1){\footnotesize$\e{4213 }$};
 \draw[edge] (v1) to node[LabelStyleH]{\Tiny$\tilde{2} $} (v2);
 \draw[edge] (v2) to node[LabelStyleH]{\Tiny$\tilde{3} $} (v3);
 \draw[edge] (v3) to node[LabelStyleH]{\Tiny$\tilde{2} $} (v4);
 \draw[edge] (v4) to node[LabelStyleH]{\Tiny$\tilde{3} $} (v5);

 \end{tikzpicture}
         \vspace{.3in}

 \begin{tikzpicture}[xscale = 2.2,yscale = 3.1]
\tikzstyle{vertex}=[inner sep=0pt, outer sep=4pt]
\tikzstyle{aedge} = [draw, thin, ->,black]
\tikzstyle{edge} = [draw, thick, -,black]
\tikzstyle{dashededge} = [draw, very thick, dashed, black]
\tikzstyle{LabelStyleH} = [text=black, anchor=south]
\tikzstyle{LabelStyleV} = [text=black, anchor=east]

 \node[vertex] (v1) at (1,1){\footnotesize$\e{3421 }$};
 \node[vertex] (v2) at (2,1){\footnotesize$\e{4231 }$};
 \node[vertex] (v3) at (3,1){\footnotesize$\e{4312 }$};
 \draw[edge] (v1) to node[LabelStyleH]{\Tiny$\tilde{2} $} (v2);
 \draw[edge] (v2) to node[LabelStyleH]{\Tiny$\tilde{3} $} (v3);

 \end{tikzpicture}
         \vspace{.3in}

 \begin{tikzpicture}[xscale = 2.2,yscale = 3.1]
\tikzstyle{vertex}=[inner sep=0pt, outer sep=4pt]
\tikzstyle{aedge} = [draw, thin, ->,black]
\tikzstyle{edge} = [draw, thick, -,black]
\tikzstyle{dashededge} = [draw, very thick, dashed, black]
\tikzstyle{LabelStyleH} = [text=black, anchor=south]
\tikzstyle{LabelStyleV} = [text=black, anchor=east]

 \node[vertex] (v1) at (1,1){\footnotesize$\e{4321 }$};

 \end{tikzpicture}
\caption{\label{f D graph k3} The  D$_0$~graph on  $\S_4$ with only rotation edges.  In the notation of \textsection\ref{ss Assaf's LLT bijectivizations}, this is the Assaf LLT$_3$ D~graph $\G_3^\stand$, restricted to  $\S_4$.}
\end{figure}

\subsection{KR sets}
\label{ss KR sets}
The vertex sets of D$_0$~graphs are the same as zero-one vectors of  $(\Irkst)^\perp$, as we now show.
In light of Theorem~\ref{t basics}, these sets are important objects in their own right and are in some ways more fundamental than D$_0$~graphs.

\begin{propdef}\label{pd KR set}
For a subset $W$ of $\W_n$, the following are equivalent:
\begin{list}{\emph{(\roman{ctr})}}{\usecounter{ctr} \setlength{\itemsep}{2pt} \setlength{\topsep}{3pt}}
\item $W$ is the vertex set of a  D$_0$~graph.
\item for every KR square $X$ of $\W_n$, the set $W \cap X$ is empty, is all of $X$, or consists of two words which differ by a Knuth transformation or rotation transformation.
\item $\sum_{\e{w} \in W} \e{w} \in (\Irkst)^\perp$.
\end{list}
If  $W$ satisfies these conditions, then we say that  $W$ is a \emph{KR set}.
\end{propdef}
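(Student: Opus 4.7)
The plan is to prove the chain of equivalences $(i) \Leftrightarrow (ii) \Leftrightarrow (iii)$; each step reduces to a short unpacking of the relevant definition.

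For $(i) \Leftrightarrow (ii)$, I compare directly with Definition~\ref{d D0 graph}. If $W = \ver(\G)$ for a D$_0$ graph $\G$, then for each KR square $X$ exactly one of cases (i)--(v) of that definition holds (case (vi) being forbidden for D$_0$ graphs), and those cases are precisely the trichotomy in (ii): $W\cap X$ is empty, $W\cap X$ is one of the four Knuth/rotation pairs, or $X \subseteq W$. Conversely, given $W$ satisfying (ii), I construct a D$_0$ graph on $W$ by declaring, for each KR square $X$ with $W \cap X$ a Knuth (resp.\ rotation) pair, that pair to be an edge of color $i$ (determined by $X$) and type Knuth (resp.\ rotation); and for each KR square $X \subseteq W$, adding the two Knuth edges of case (iv) of Definition~\ref{d D0 graph} (either a Knuth or rotation split works). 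This is well-defined because every $\e{u} \in \W_n$ lies in at most one KR$_i$ square for each $i$---the unique one determined by the entries of $\e{u}$ outside positions $i-1,i,i+1$---so the KR$_i$ squares partition a subset of $\W_n$ and the edges added from distinct KR squares never conflict.

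For $(ii) \Leftrightarrow (iii)$, I use that $\Irkst$ is generated as a two-sided ideal by the elements $\e{bac} - \e{bca} - \e{acb} + \e{cab}$ for letters $a<b<c$, together with the monomials $\e{w}$ having a repeated letter. Since the monomial basis is orthonormal, $\mathbf{f} := \sum_{\e{w} \in W} \e{w}$ lies in $(\Irkst)^\perp$ if and only if $\mathbf{f}$ is orthogonal to every element of the form $\e{v}\, z\, \e{w}$ with $z$ one of these generators and $\e{v}, \e{w}$ monomials. Because $W \subseteq \W_n$, orthogonality to all words with a repeated letter is automatic, and the remaining conditions read
\[
[\e{v\,bac\,w} \in W] - [\e{v\,bca\,w} \in W] - [\e{v\,acb\,w} \in W] + [\e{v\,cab\,w} \in W] = 0
\]
for all letters $a<b<c$ and words $\e{v}, \e{w}$ with $|\e{v}|+|\e{w}|+3 = n$. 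When $\e{v}$ or $\e{w}$ has a repeated letter or shares a letter with $\{a,b,c\}$, all four indicators vanish and the equation holds trivially; otherwise $(\e{v\,bac\,w}, \e{v\,bca\,w}, \e{v\,acb\,w}, \e{v\,cab\,w})$ is a KR square $X$, and a direct case analysis on $|W \cap X| \in \{0,1,2,3,4\}$ shows the balance equation holds precisely when $W \cap X$ is empty, all of $X$, or one of the four Knuth/rotation pairs---exactly condition (ii) for $X$.

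The whole argument is routine; the only moment of care is the final case analysis, which must rule out not only the singleton and triple subsets of $X$ (each contributing $\pm 1$ to the balance) but also the two ``diagonal'' pairs $\{\e{v\,bac\,w}, \e{v\,cab\,w}\}$ and $\{\e{v\,bca\,w}, \e{v\,acb\,w}\}$, which contribute $\pm 2$. This confirms that the single algebraic balance equation in (iii) carves out exactly the combinatorial Knuth/rotation structure in (ii).
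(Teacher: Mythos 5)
Your proposal is correct and takes essentially the same route as the paper: compare Definition~\ref{d D0 graph} case by case for (i)~$\Leftrightarrow$~(ii), then observe that $\Irkst$ is generated by the quadrinomials $\e{bac}-\e{bca}-\e{acb}+\e{cab}$ together with repeated-letter monomials, so that $(\Irkst)^\perp$ is cut out by the balance equations over KR squares, and enumerate which intersections $W\cap X$ satisfy them (the paper lists the six orthogonal zero-one vectors explicitly; your case analysis on $|W\cap X|$ and the $\pm 2$ diagonal pairs is the same computation). One pedantic point: when you say ``$\e{v}$ or $\e{w}$ has a repeated letter or shares a letter with $\{a,b,c\}$'' you should also include the case that $\e{v}$ and $\e{w}$ share a letter with each other, since that too forces all four indicators to vanish; this is harmless, as the conclusion of that sentence still holds.
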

\begin{proof}
By comparing the definition of a D$_0$~graph to (ii), it is clear that (i) implies (ii).
To see that (ii) implies (i), note that the KR$_i$ squares of  $\W_n$ are pairwise disjoint. Hence if  $W$ is as in (ii) and  $t_i$ is the number of KR$_i$ squares  $X$ such that  $X \subseteq W$, then there are  $2^{\sum_i t_i}$ D$_0$~graphs with vertex set   $W$ corresponding to the $\sum_i t_i$ independent choices for the types of these
KR squares.

We now prove the equivalence of (ii) and (iii).
Let $X = \e{v} \stars \e{w}$ be a KR square and let  $\ZZ X$ be the  $\ZZ$-module  with  $\ZZ$-basis
\begin{align*}
\e{v\stars w} =  \big(\e{v \, bac \, w}, \e{v \, bca \, w}, \e{v \, acb \, w}, \e{v \, cab \, w}\big).\label{e KR4tuple}
\end{align*}
Consider the vector
\[h_X :=  \e{v \, bac \, w} -\e{v \, bca \, w} -\e{v \, acb \, w} +\e{v \, cab \, w} = \matrow{1 & -1 & -1 & 1} \in \ZZ X.\]
The zero-one vectors orthogonal to  $h_X$ are
\begin{equation}
\begin{array}{l}\label{e 01 perp}
\matrow{0 & 0 & 0 & 0}, \
\matrow{1 & 1 & 1 & 1}, \
\matrow{1 & 1 & 0 & 0}, \
\matrow{0 & 0 & 1 & 1}, \
\matrow{1 & 0 & 1 & 0}, \
\matrow{0 & 1 & 0 & 1}.
\end{array}
\end{equation}

Therefore, since the  $\ZZ$-module $\Irkst \cap \ZZ \W_n$ is spanned by the elements  $h_X$ over all
KR squares  $X$ of  $\W_n$,
an element $\sum_{\e{w} \in W} \e{w} \in \ZZ \W_n$ belongs to $(\Irkst)^\perp$ if and only if  $\sum_{\e{w}\in W \cap X}\e{w}$ is one of the six vectors in \eqref{e 01 perp} for all KR squares  $X$.
The equivalence of (ii) and (iii) then follows because
those pairs of words of  $X$ which differ by a Knuth transformation or rotation transformation are exactly the supports of the last four vectors of \eqref{e 01 perp}.
\end{proof}

We define (a slight generalization of Definition \ref{d gen func}) the \emph{generating function} of a partial D$_0$~graph $H$ to be
\[\Delta\Big(\sum_{\e{w} \in \ver(H)}\e{w} \Big)  = \sum_{\e{w} \in \ver(H)}Q_{\Des(\e{w})}(\mathbf{x}).\]
Note that this only depends on the KR set  $\ver(H)$ and not on the edges of $H$.
Theorem~\ref{t basics} and Lemma \ref{l es commute} then have the following corollary.
\begin{corollary}\label{c symmetric bijectivizations}
The generating function of any (partial) D$_0$~graph $H$ is a symmetric function.
The coefficient of $s_\lambda(\mathbf{x})$ in this function is
$\langle \mathfrak{J}_{\lambda}(\mathbf{u}) , \sum_{\e{w} \in \ver(H)} \e{w} \rangle$.
\end{corollary}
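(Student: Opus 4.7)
The plan is to combine the three preceding results---Theorem \ref{t basics}, Lemma \ref{l es commute}, and Proposition-Definition \ref{pd KR set}---in a straightforward way. Set $f := \sum_{\e{w} \in \ver(H)} \e{w} \in \U$, so that by the definition of the generating function of a partial D$_0$ graph (given just before the corollary), the generating function of $H$ equals $\Delta(f)$. The goal is to apply Theorem \ref{t basics} with $I = \Irkst$ to this particular $f$.

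First I would verify that $f \in (\Irkst)^\perp$. Since $H$ is a partial D$_0$ graph, each KR square $X$ of $\W_n$ falls into one of the six cases (i)--(vi) of Definition \ref{d D0 graph}; a quick inspection shows that in every case $W \cap X$ (where $W = \ver(H)$) is either empty, equal to all of $X$, or consists of exactly two words related by a Knuth or rotation transformation. This is precisely condition (ii) of Proposition-Definition \ref{pd KR set}, so by the equivalence of (ii) and (iii) there, $W$ is a KR set and $f \in (\Irkst)^\perp$. (Note that Proposition-Definition \ref{pd KR set} is stated for D$_0$ graphs, but the extra case (vi) permitted by the definition of a partial D$_0$ graph contributes $\matrow{1 & 1 & 1 & 1}$ to $f|_X$, which is one of the six zero-one vectors listed in \eqref{e 01 perp}; so the same argument goes through.)

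Next I would invoke Lemma \ref{l es commute}, which states that $e_i(\mathbf{u}) e_j(\mathbf{u}) = e_j(\mathbf{u}) e_i(\mathbf{u})$ in $\U/\Irkst$ for all $i,j$. This is precisely the hypothesis of Theorem \ref{t basics} for the ideal $I = \Irkst$. Applying that theorem to $f$ yields
\[
\Delta(f) \, = \, \sum_\lambda s_\lambda(\mathbf{x}) \, \langle \mathfrak{J}_\lambda(\mathbf{u}), f \rangle,
\]
where the sum is over all partitions $\lambda$. Since the right-hand side is a $\ZZ$-linear combination of Schur functions, $\Delta(f)$ is symmetric, and the coefficient of $s_\lambda(\mathbf{x})$ is exactly $\langle \mathfrak{J}_\lambda(\mathbf{u}), \sum_{\e{w} \in \ver(H)} \e{w} \rangle$, as desired.

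There is no real obstacle here: the corollary is essentially a repackaging of the previous three results for the specific element $f$ attached to a (partial) D$_0$ graph. The only point that requires a moment's attention is the extension of Proposition-Definition \ref{pd KR set} from D$_0$ graphs to partial D$_0$ graphs, which is immediate because case (vi) of Definition \ref{d D0 graph} still produces a valid row in \eqref{e 01 perp}.
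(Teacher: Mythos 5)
Your proof is correct and is essentially the paper's intended argument: set $f = \sum_{\e{w}\in\ver(H)}\e{w}$, verify that $f\in(\Irkst)^\perp$ via Proposition-Definition \ref{pd KR set}, and apply Theorem \ref{t basics} using Lemma \ref{l es commute}. The paper leaves this implicit (the corollary is stated immediately after noting the vertex set of a partial D$_0$ graph is a KR set), and your side remark on case (vi) correctly explains why the proposition-definition extends to partial D$_0$ graphs; one could also note more directly that $W\cap X = X$ in case (vi), which is already covered by condition (ii) of Proposition-Definition \ref{pd KR set}.
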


\subsection{Preliminary definitions for D~graphs}
\label{ss Preliminary definitions for D graphs}
Before recalling the axioms of a D~graph from \cite{Sami}, we must first recall several definitions from that paper.

\begin{definition}[{\cite[Definition 3.3]{Sami}}]
A \emph{signed, colored graph\footnote{These are slightly less general than the signed, colored graphs considered in \cite{Sami}.} of degree  $n$} is an (undirected) graph  $\G$ with vertex set  $V$ whose edges are colored from the set  $\{2,3,\ldots, n-1\}$, together with a \emph{signature function}  $\sigma : V \to \{+1,-1\}^{n-1}$.

A signed, colored graph satisfies \emph{axiom 0} if the  $i$-degree of every vertex is at most 1.
All signed, colored graphs considered in this paper satisfy axiom 0.
Let  $\G$ be a signed, colored graph satisfying axiom 0.
The notation $E_i(v) = w$ means that  $\{v,w\}$ is an  $i$-edge of  $\G$.
A vertex  $v$ of  $\G$ \emph{admits an  $i$-neighbor} if there is an $i$-edge $\{v,w\}$ for some  $w$.
\end{definition}

\begin{remark}\label{r involution}
It is sometimes convenient to think of  $E_i$  in the definition above as an involution of  $V$, with  fixed points those vertices not  admitting an  $i$-neighbor.
However, to avoid confusion,  we consider the expression $E_i(v)$  to be undefined  if $v$  does not admit an  $i$-neighbor.
\end{remark}

Any (partial) D$_0$~graph  $\G$ of degree  $n$ may also be regarded as a signed, colored graph whose edge colors are the same as those of  $\G$ (but edge types are ignored) and
whose signature function on each word $\e{w} \in \ver(\G)$ is given by
\[\sigma(\e{w})_i =
\begin{cases}
+1 & \text{if }\e{w_i \le w_{i+1}},\\
-1 & \text{if }\e{w_i > w_{i+1}},
\end{cases}
\quad \text{for } i \in [n-1].
\]
For example, $\sigma(\e{72158346}) = - - + + - + +$.  The signature $\sigma(\e{w})$ of  $\e{w}$ is just another encoding of the descent set  $\Des(\e{w})$, the two being related by
 $\sigma(\e{w})_i = -$ if and only if  $i \in \Des(\e{w})$.

%

\begin{definition}
Given a D$_0$~graph $\G$ with vertex set $W \subseteq \W_n$ and an interval $K = \{i,i+1,\ldots, j\}$,  $1 \le i \le j \le n$, the \emph{$K$-restriction of $\G$}, denoted $\myRes_K \G$, is the graph with
\begin{list}{{(\roman{ctr})}}{\usecounter{ctr} \setlength{\itemsep}{2pt} \setlength{\topsep}{3pt}}
\item vertex set  $W$, with each vertex $\e{w} \in W$ labeled by the word $\e{w_iw_{i+1}\cdots w_j}$,
\item signature function $\hat{\sigma}$ defined by $\hat{\sigma}_s(\e{w}) = \sigma_{s+i-1}(\e{w}),$ for all $s \in [j-i]$, $\e{w} \in W$,
\item an  $(s-i+1)$-edge for each  $s$-edge of  $\G$ (with the same ends as in  $\G$), for all  $s \in \{i+1,\ldots, j-1\}$.
\end{list}

For a signed, colored graph $\G$, we define the \emph{$K$-restriction of $\G$}, $\myRes_K \G$, in the same way,
the only difference being that in place of (i) we simply define $\ver(\myRes_K \G)$ to be $\ver(\G)$.

If  $\G$ is a D$_0$~graph or a signed, colored graph and $w$ is a vertex of  $\G$,
we define $\myRes_K(\G, w)$ to be the component of $\myRes_K \G$ containing $w$.
\end{definition}
For example, here is a  D$_0$~graph (top line) and its $\{2,3,4\}$-restriction (bottom line):

\begin{center}
\begin{tikzpicture}[xscale = 2,yscale = 1]
\tikzstyle{vertex}=[inner sep=0pt, outer sep=4pt]
\tikzstyle{framedvertex}=[inner sep=3pt, outer sep=4pt, draw=gray]
\tikzstyle{aedge} = [draw, thin, ->,black]
\tikzstyle{edge} = [draw, thick, -,black]
\tikzstyle{doubleedge} = [draw, thick, double distance=1pt, -,black]
\tikzstyle{hiddenedge} = [draw=none, thick, double distance=1pt, -,black]
\tikzstyle{dashededge} = [draw, very thick, dashed, black]
\tikzstyle{LabelStyleH} = [text=black, anchor=south]
\tikzstyle{LabelStyleHn} = [text=black, anchor=north]
\tikzstyle{LabelStyleV} = [text=black, anchor=east]

\node[vertex] (v1) at (1,1){\footnotesize$\e{23451}$};
\node[vertex] (v2) at (2,1){\footnotesize$\e{23415}$};
\node[vertex] (v3) at (3,1){\footnotesize$\e{24135}$};
\node[vertex] (v4) at (4,1){\footnotesize$\e{41235}$};

\draw[edge] (v1) to node[LabelStyleH]{\Tiny$4 $} (v2);
\draw[edge] (v2) to node[LabelStyleH]{\Tiny$\tilde{3} $} (v3);
\draw[edge] (v3) to node[LabelStyleH]{\Tiny$\tilde{2} $} (v4);
\end{tikzpicture}

\begin{tikzpicture}[xscale = 2,yscale = 1]
\tikzstyle{vertex}=[inner sep=0pt, outer sep=4pt]
\tikzstyle{framedvertex}=[inner sep=3pt, outer sep=4pt, draw=gray]
\tikzstyle{aedge} = [draw, thin, ->,black]
\tikzstyle{edge} = [draw, thick, -,black]
\tikzstyle{doubleedge} = [draw, thick, double distance=1pt, -,black]
\tikzstyle{hiddenedge} = [draw=none, thick, double distance=1pt, -,black]
\tikzstyle{dashededge} = [draw, very thick, dashed, black]
\tikzstyle{LabelStyleH} = [text=black, anchor=south]
\tikzstyle{LabelStyleHn} = [text=black, anchor=north]
\tikzstyle{LabelStyleV} = [text=black, anchor=east]

\node[vertex] (v1) at (1,1){\footnotesize$\e{345}$};
\node[vertex] (v2) at (2,1){\footnotesize$\e{341}$};
\node[vertex] (v3) at (3,1){\footnotesize$\e{413}$};
\node[vertex] (v4) at (4,1){\footnotesize$\e{123}$};

\draw[edge] (v2) to node[LabelStyleH]{\Tiny$\tilde{2} $} (v3);
\end{tikzpicture}
\end{center}

\begin{remark}\label{r restriction}
The reason for the somewhat awkward definition of the vertex set of
$\myRes_K \G$ in (i) is because two distinct vertices $\e{w}, \e{w'}\in W$
can have the same label  $\e{w_iw_{i+1}\cdots w_j} = \e{w'_iw'_{i+1}\cdots w'_j}$.
This also means that $\myRes_K \G$ is
not a D$_0$~graph  in general.
But this is harmless because  every connected component of $\myRes_K \G$ is a D$_0$~graph.
(To view a component  $H$ of $\myRes_K \G$ as a D$_0$~graph, we should forget the original set $W$
and only retain the vertex labels $\{\e{w_iw_{i+1}\cdots w_j} \mid \e{w} \in \ver(H)\}$ of  $H$.)
\end{remark}

\begin{definition}
Let $\G$ be a signed, colored graph satisfying axiom 0. Any component  $H$ of $\myRes_{[i-2,i+1]} \G$ is a path or an even cycle.
For $2 < i < n$, an \emph{$(i-1)$-$i$-string for  $H$} is a sequence $(x^1, x^2,\ldots, x^{t})$ consisting of all the vertices of  $H$ (without repetition)
such that the pairs $\{x^1, x^{2}\}$, $\{x^2, x^{3}\}$, $\dots$ alternate between $i$-edges and $i-1$-edges (starting with either an  $i$-edge or an $i-1$-edge).
We also say that $(x^1, x^2,\ldots, x^{t})$ is an \emph{$(i-1)$-$i$-string} if there exists an  $H$ such that it is an $(i-1)$-$i$-string for  $H$.
\end{definition}
%

\begin{definition}[{\cite[Definition 5.3]{Sami}}]
A vertex $v$ of a signed, colored graph $\G$ satisfying axiom 0 \emph{has $i$-type W} if  $v$ admits an  $i$-neighbor and an  $i-1$-neighbor and
$\sigma(v)_i = -\sigma(E_{i-1}(v))_i$.
\end{definition}
For example, the vertices in Figure \ref{f D graph k3} that have 3-type W are $\e{2143, 1423, 3412, 4132}$.

\begin{definition}[{\cite[Definition 5.7]{Sami}}]
\label{d flat chain}
Let $\G$ be a signed, colored graph of degree  $n$ satisfying axiom 0.  For
$4 \le i < n$, a \emph{flat $i$-chain} of  $\G$ is a sequence $(x^1, x^2,\ldots, x^{2h-1}, x^{2h})$ of distinct vertices admitting $i-2$-neighbors
such that
\[x^{2j-1} = E_i(x^{2j})\, \text{ and }\, x^{2j+1} = E_{i-2}(E_{i-1}E_{i-2})^{m_j}(x^{2j}),\]
for nonnegative integers $m_j$ such that $(E_{i-1}E_{i-2})^{m_j}(x^{2j})$ does not have $(i-1)$-type W.
\end{definition}
In accordance with Remark \ref{r involution}, we assume that  the applications of $E_j$  in this definition correspond to  $j$-edges.
A flat 4-chain of a  D$_0$~graph of degree  $8$ is shown in Figure \ref{f flat chain}.

\begin{remark}
To understand this definition, it is helpful to first consider the case in which the  $m_j$ are all 0.
For instance, if $\G$ is a  D$_0$~graph
in which all  $i-2$-edges and  $i-1$-edges are Knuth, then the flat  $i$-chains of  $\G$ have this property.
A flat  $i$-chain in the case all the $m_j$ are 0 is a sequence $(x^1, x^2,\ldots, x^{2h})$ of distinct vertices such that
\begin{itemize}
\item the pairs $\{x^1, x^{2}\}$, $\{x^2, x^{3}\}$, $\dots$,  $\{x^{2h-1}, x^{2h}\}$ alternate between $i$-edges and \newline \mbox{$i-2$-edges}, starting and ending with an $i$-edge, and
\item $x^1$ and  $x^{2h}$ admit  $i-2$-neighbors.
\end{itemize}
\end{remark}

\begin{remark}
\label{r flat chain}
Let $(x^1, x^2,\ldots, x^{2h})$ be a flat $i$-chain of a D$_0$~graph $\G$. The vertices $x^{2j}$ and $x^{2j+1}$ are related as follows:
let $H$ be the component of $\myRes_{[i-3,i]}\G$ containing  $x^{2j}$ and $x^{2j+1}$. By Proposition \ref{p deg4restrictions} (below), $H$ is either a 2-cycle,
a path with two edges, or a path with four edges.
The component $H$ cannot be a 2-cycle because in this case both $x^{2j}$ and $x^{2j+1}$ have $(i-1)$-type W. If $H$ is a path with two edges, then we must have $x^{2j+1} = E_{i-2}(x^{2j})$.

Finally, suppose $H$ is a path with four edges and let $(y^1,\ldots, y^5)$ be the $(i-2)$-$(i-1)$-string for  $H$ such that  $\{y^1,y^2\}$ is an  $i-1$-edge.
Then the vertices of  $H$ having $(i-1)$-type W are  $y^2$ and  $y^3$. Hence by the definition of a flat  $i$-chain, $x^{2j+1} \in \{y^4,y^5\}$.
We also must have  $m_j \in \{0,1\}$.  Checking each possibility for $x^{2j+1}$ and  $m_j$ shows that exactly one of the following must hold:
\begin{itemize}
\item $x^{2j} = y^2$, $m_j = 1$,  $x^{2j+1} = y^5$,
\item $x^{2j} = y^4$, $m_j = 0$,  $x^{2j+1} = y^5$,
\item $x^{2j} = y^5$, $m_j = 0$,  $x^{2j+1} = y^4$.
\end{itemize}
In particular, the $m_j$ and $x^{2j+1}$ in the definition of a flat  $i$-chain are determined uniquely by  $x^{2j}$.
\end{remark}


\subsection{D~graphs}
\label{ss D graphs}
Let  $\G$ be a signed, colored graph with vertex set $V$.
The following axioms from \cite{Sami} are needed to define D~graphs:
\begin{list}{}{\usecounter{ctr} \setlength{\itemsep}{2pt} \setlength{\topsep}{3pt}}
\item[(axiom 1)] $\G$ satisfies axiom 0
and $w \in V$ admits an  $i$-neighbor if and only if $\sigma(w)_{i-1} = -\sigma(w)_i$;
\item[(axiom 2)] for each $i$-edge $\{w, x \}$, $\sigma(w)_j = -\sigma(x)_j$ for $j = i-1, i,$ and $\sigma(w)_h = \sigma(x)_h$ for $h < i-2$ and $h > i+1$;
\item[(axiom 3)] for each $i$-edge $\{w, x \}$, if  $\sigma(w)_{i-2} = -\sigma(x)_{i-2}$, then $\sigma(w)_{i-2} = -\sigma(w)_{i-1}$, and if $\sigma(w)_{i+1} = -\sigma(x)_{i+1}$, then $\sigma(w)_{i+1} = -\sigma(w)_i$;
\item[(axiom $4'a$)] 
for every  $w \in V$ such that\footnote{In the statement of axiom  $4'a$ in \cite{Sami}, the conditions on $w$ are $w \in W_i(\G)$ has a non-flat $i-1$-edge, which are equivalent to the conditions in \eqref{e 4'a} assuming axioms 1 and 2.}
\begin{equation} \label{e 4'a}
\parbox{10cm}{$w$ admits an $i-1$-neighbor  $v$ and $\{w,v\}$ is not an $i$-edge, and $\sigma_{i-3}(w) \neq \sigma_{i-3}(v)$ and $\sigma_i(w) \neq \sigma_i(v)$,}
\end{equation}
the generating function of $\myRes_{[i-3,i]}(\G, w)$ is equal to that of $\myRes_{[i-2,i+1]} (\G,w)$;
\item[(axiom $4'b$)] if $(x^1,x^2,\ldots,x^{2h-1},x^{2h})$ is a flat $i$-chain and $x^j$ has $(i+1)$-type W for some $2 < j < 2h-1$, then either $x^1, \dots, x^j$
have $(i+1)$-type W or  $x^j, \dots, x^{2h}$ have  $(i+1)$-type W (or both);
\item[(axiom 5)] if $\{w,x\}$ is an $i$-edge and $\{x,y\}$ is a $j$-edge for $|i-j| \geq 3$, then $\{w,v\}$ is a $j$-edge and $\{v,y\}$ is an $i$-edge for some $v \in V$.
\end{list}
\begin{remark}\label{r axiom 4'b}
Note that we must assume  $\G$ satisfies axiom 0 in the definition of axiom  $4'b$ since this is assumed in the definition of a flat  $i$-chain.
The version of axiom $4'b$ stated here is from an earlier version of \cite{Sami} posted on 04-2014.
We use the 04-2014 version because it implies the version in \cite{Sami} (posted 08-2014) and is more straightforward to check.
For reference, the 08-2014 version is:
\[\parbox{14cm}{
if $x \in C_i(\G)$ has $(i+1)$-type W,
then there exists a maximal length flat $i$-chain $(x^1,x^2,\ldots,x^{2h-1},x^{2h})$ with $x=x^j$ such that either $x^1, \dots, x^j$ have $(i+1)$-type W or  $x^j, \dots, x^{2h}$ have  $(i+1)$-type W (or both).}\]
Here,
\[C_i(\G) := \{x \in \ver(\G) \mid x = x_j \text{ on a flat $i$-chain of length $2h$ with $2 < j < 2h - 1$}\}.\]

For comparison, here is the version of axiom $4'b$ from \cite{SamiOct13} (posted 10-2013):
\[\parbox{14cm}{
if $x \in C^\text{simple}_i(\G)$ has $(i+1)$-type $W$ and $E_{i-2}(x)$ does not, then $(E_{i-2}E_i)^m (x)$ has $(i+1)$-type $W$ for
all $m \geq 1$ for which $(E_{i-2}E_i)^{m-1}(x)$ admits an $i$-neighbor.}
\]
Here,
\[C^\text{simple}_i(\G) := \left\{x \in \ver(\G) \middle| \, \parbox{9.8cm}{$E_{i-2}E_iE_{i-2}(x), E_iE_{i-2}(x),E_{i-2}(x), x, E_i(x), E_{i-2}E_i(x)$ are well defined and distinct}\right\},\]
where we have assumed  $\G$ satisfies axioms 1 and 2 to translate from the notation of \cite{SamiOct13}.
\end{remark}

The \emph{generating function} of a signed, colored graph  $\G$ with signature function  $\sigma$
is defined to be  $\sum_{v \in \ver(\G)} Q_{\Des(v)}(\mathbf{x})$, where  $\Des(v)$ is defined to be the descent set encoded by  $\sigma(v)$, i.e.
$i \in \Des(v)$ if and only if $\sigma(v)_i = -$.
When  $\G$ is also a  D$_0$~graph, this generating function is the same as the generating function of  $\G$ previously defined (Definition \ref{d gen func}).

\begin{definition}[{\cite[Definition 4.5]{Sami}}]
A signed, colored graph  $\G$ is \emph{locally Schur positive for degree $d$} ($\LSP_d$) if the generating function of every
component of every
$K$-restriction  of $\G$ is Schur positive, for all intervals  $K$ of size $d$.
\end{definition}
Note that  $\G$ being $\LSP_d$ depends only on the signature function of  $\G$ and not on the edges of $\G$.

\begin{definition}
A \emph{D$_5$ graph} is a
D$_0$~graph that satisfies axiom 5.

Recall Definition 5.37 from \cite{Sami}:
a signed, colored graph is a
\emph{D~graph} if it satisfies axioms 1, 2, 3, $4'a$, $4'b$, 5, and  $\LSP_4$ and  $\LSP_5$.
\end{definition}

In the  next section
we will prove that a  D$_0$~graph satisfying axioms $4'b$ and 5 is a D~graph (Theorem~\ref{t D0 satisfy 123etc}).

\section{Basic properties of  D$_0$~graphs}
\label{s Basic properties of D0 graphs}
Here we show that  D$_0$~graphs satisfy  $\LSP_4$ and  $\LSP_5$ using noncommutative Schur functions.
We also show that they satisfy axioms 1, 2, 3, and $4'a$ as well as some other basic properties.

\subsection{Components, unions, and complements}
D$_0$~graphs are well behaved under some basic set operations.
\begin{proposition} \label{p component KR set}
~
\begin{list}{\emph{(\alph{ctr})}}{\usecounter{ctr} \setlength{\itemsep}{2pt} \setlength{\topsep}{3pt}}
\item The connected components of a D$_0$~graph are D$_0$~graphs.
\item The union of two D$_0$~graphs is a D$_0$~graph if their vertex sets are disjoint subsets of  $\W_n$.
\item If  $\G$ is a D$_0$~graph on a vertex set $W \subseteq \W_n$, then there is a  D$_0$~graph on the vertex set $\W_n\setminus W$.
\end{list}
\end{proposition}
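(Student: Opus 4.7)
The plan is to exploit Proposition-Definition \ref{pd KR set}, which characterizes D$_0$ vertex sets (``KR sets'') both combinatorially and as zero-one vectors in $(\Irkst)^\perp$, and to proceed one KR square at a time. The combinatorial fact used throughout is that the four vertices of a KR square $X = \e{v} \stars \e{w}$ admit exactly two decompositions into a pair of pairs compatible with being edges of a D$_0$ graph: the \emph{Knuth} decomposition $\{\e{v\,bac\,w}, \e{v\,bca\,w}\} \sqcup \{\e{v\,acb\,w}, \e{v\,cab\,w}\}$ and the \emph{rotation} decomposition $\{\e{v\,bac\,w}, \e{v\,acb\,w}\} \sqcup \{\e{v\,bca\,w}, \e{v\,cab\,w}\}$. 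In particular, the complement in $X$ of any edge-pair is a pair of the same Knuth/rotation type.

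For (a), let $\G$ be a D$_0$ graph on $W$ and let $C$ be a component with vertex set $W_C$. For each KR square $X$, Definition \ref{d D0 graph} forces $X \cap W$ to be empty, a single edge-pair, or all of $X$ split into two edge-pairs of the same type. Since every edge of $\G$ joins vertices of the same component, intersecting with $W_C$ amounts to keeping or deleting each edge-pair as a whole. The resulting possibilities for $X \cap W_C$ are therefore empty, an edge-pair, or all of $X$ split as in case (iv) or (v) of Definition \ref{d D0 graph}, so the induced subgraph on $W_C$ is a D$_0$ graph.

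For (b), fix a KR square $X$ and suppose $X \cap W_1$ is a proper nonempty edge-pair $P$ of $\G_1$. Then $X \setminus P$ is a pair of the same type as $P$, and because $W_1 \cap W_2 = \varnothing$ we have $X \cap W_2 \subseteq X \setminus P$, so $X \cap W_2$ is either empty or all of $X \setminus P$. In the latter case $\G_2$ must carry the unique edge on $X \setminus P$, necessarily of the matching type, so $X \cap (W_1 \cup W_2) = X$ is split by two edges as in case (iv) or (v) of Definition \ref{d D0 graph}. The remaining cases (one of $X \cap W_1, X \cap W_2$ empty or equal to $X$) are immediate, verifying that $\G_1 \cup \G_2$ is a D$_0$ graph. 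For (c), the slickest route uses part (iii) of Proposition-Definition \ref{pd KR set}: $\sum_{\e{w} \in \W_n} \e{w}$ lies in $(\Irkst)^\perp$ because its restriction to any KR square is the all-ones vector listed in \eqref{e 01 perp}. Subtracting $\sum_{\e{w} \in W} \e{w} \in (\Irkst)^\perp$ gives $\sum_{\e{w} \in \W_n \setminus W} \e{w} \in (\Irkst)^\perp$, so $\W_n \setminus W$ is a KR set and underlies some D$_0$ graph.

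The main obstacle, modest but essential, is the ``complementary pairs have matching type'' fact needed in (b); it rules out the \emph{a priori} worrisome configuration in which $\G_1$ contributes a Knuth edge to a KR square while $\G_2$ contributes a rotation edge to the complementary pair. Once this pairing lemma is established, each of (a), (b), (c) reduces to a finite case-check over the six configurations in \eqref{e 01 perp}.
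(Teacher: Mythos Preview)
Your proof is correct and follows essentially the same approach as the paper's, which handles (a) and (b) by direct verification against Definition \ref{d D0 graph} and handles (c) via the orthogonality characterization in Proposition-Definition \ref{pd KR set}. The paper is much terser (it simply asserts that (a) and (b) follow easily from the definition), whereas you spell out the key ``complementary pairs have matching type'' observation that makes (b) go through cleanly.
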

\begin{proof}
It is easy to see that any connected component of a  D$_0$~graph satisfies (i)--(v) in Definition \ref{d D0 graph}, hence (a).
Statement (b) also follows easily from the definition of  D$_0$~graphs.
For (c), note that  $\W_n$ and  $W$ are KR sets; 
hence  $\sum_{\e{w} \in \W_n \setminus W} \e{w} \in (\Irkst)^\perp$ by Proposition-Definition \ref{pd KR set}.
Therefore, there is a D$_0$~graph on the vertex set  $\W_n \setminus W$, again by Proposition-Definition \ref{pd KR set}.
\end{proof}

\subsection{Local structure of D$_0$~graphs}

For a word  $\e{w} \in \W$, the \emph{standardization} of  $\e{w}$, denoted $\e{w}^{\stand}$,
is the permutation obtained from $\e{w}$ by relabeling its smallest letter by 1, its next smallest letter by 2, etc.
\begin{proposition}\label{p order preserving}
If  $\G$ is a connected D$_0$~graph of degree  $n$ on a vertex set  $W$, then there is a unique  D$_0$~graph  $\G^\stand$ on the set  $W^\stand := \{\e{w}^\stand \mid \e{w} \in W\} \subseteq \S_n$ such that the standardization map
induces a graph isomorphism $\G \to \G^\stand$ which preserves edge color and type and vertex signatures.
\end{proposition}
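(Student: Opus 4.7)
The plan is to construct $\G^\stand$ by transporting the edges of $\G$ via the standardization map, after first verifying that this map is injective on $W$ thanks to the connectedness hypothesis. Every Knuth and every rotation transformation only rearranges three letters $a<b<c$ occurring at three fixed positions while leaving every other letter of the word untouched; in particular it preserves the multiset of letters appearing in the word. Since each edge of $\G$ is such a transformation and $\G$ is connected, all words in $W$ share a common set $S\subseteq\{\e{1},\ldots,\e{N}\}$ of $n$ distinct letters.

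Let $\theta\colon S\to[n]$ be the unique order-preserving bijection, and extend $\theta$ letter-by-letter to a bijection from the length-$n$ words with no repeated letter in the alphabet $S$ onto $\S_n$. The restriction of standardization to $W$ is exactly this extended $\theta$, so it is injective and gives a bijection $W\to W^\stand$. Because $\theta$ is order-preserving, it sends each triple of letters $a<b<c$ in $S$ to a triple $\theta(a)<\theta(b)<\theta(c)$ in $[n]$, and hence induces a bijection between KR$_i$ squares $\e{v}\stars\e{w}$ built from letters of $S$ and KR$_i$ squares of $\W_n$, at the same positions. Under $\theta$, a Knuth (resp.\ rotation) edge is mapped to a pair which, on the target side, is again a candidate Knuth (resp.\ rotation) edge of the same color. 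Define $\G^\stand$ to have vertex set $W^\stand$ and to have an $i$-edge of type $\tau$ between $\e{x}^\stand$ and $\e{y}^\stand$ whenever $\{\e{x},\e{y}\}$ is an $i$-edge of type $\tau$ of $\G$. By construction, standardization is then a graph isomorphism $\G\to\G^\stand$ preserving edge color and type; and it preserves vertex signatures since descent sets are invariant under order-preserving letter relabelings.

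It remains to check that $\G^\stand$ is a D$_0$ graph and that it is unique. Any KR$_i$ square $Y$ of $\W_n$ with $Y\cap W^\stand\neq\varnothing$ has the form $Y=\theta(X)$ for a unique KR$_i$ square $X$ in the alphabet $S$, and $W^\stand\cap Y=\theta(W\cap X)$. Since $W$ is a KR set (by Proposition-Definition \ref{pd KR set}) and $\G$ is a D$_0$ graph, $W\cap X$ satisfies one of conditions (i)–(v) of Definition \ref{d D0 graph}; transporting along $\theta$, which is a bijection preserving Knuth and rotation transformations, shows $W^\stand\cap Y$ satisfies the matching condition in $\G^\stand$, so case (vi) never occurs. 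Uniqueness is forced: the vertex set $W^\stand$ is determined, and any graph isomorphism induced by standardization which preserves edge color and type has its edges determined by those of $\G$.

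The substantive input is the observation in the first paragraph; everything else is formal verification that an order-preserving renaming of letters preserves descents, Knuth and rotation transformations, and KR squares, and hence transports the entire D$_0$ graph structure intact. The main (modest) obstacle is simply to articulate cleanly why connectedness of $\G$ is precisely what makes the standardization map injective on $W$; without it, two components using different letter sets could be identified under standardization, which would generally produce a vertex with two distinct edges of the same color and violate the D$_0$ axioms.
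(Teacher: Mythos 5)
Your proof is correct and is a fully spelled-out version of the paper's (one-sentence) argument: the paper also reduces everything to the observation that Knuth and rotation transformations depend only on the relative order of the three letters involved, and that connectedness forces all words of $W$ to use a common alphabet $S$, so standardization restricted to $W$ is the order-preserving bijection $S\to[n]$ and hence injective.
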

\begin{proof}
This is straightforward from the fact that Knuth and rotation transformations only depend on the relative order of three distinct letters, not on their exact values.
The assumption that $\G$ is connected is necessary to ensure that the standardization map restricted to  the vertices of  $\G$ is injective.
\end{proof}
\begin{remark}
For most of this paper we work with $\W_n$ instead of  $\S_n$ because the former has the convenient property that any subword of some $\e{w} \in \W$ also belongs to $\W$.
However, by the previous  proposition, we could phrase all our results in terms of subsets of $\S_n$ and lose no generality.
\end{remark}

\begin{proposition}\label{p deg4restrictions}
If  $\G$ is a D$_0$~graph and  $K$ is an interval of size $4$, then the connected components of (the underlying colored graph of) $\myRes_{K} \G$ are of the form:

\vspace{-1mm}
\begin{center}         \centerfloat
\begin{tikzpicture}[xscale = 1.9,yscale = 1.2]
\tikzstyle{vertex}=[inner sep=1pt, outer sep=4pt, draw, circle, fill]
\tikzstyle{aedge} = [draw, thin, ->,black]
\tikzstyle{edge} = [draw, thick, -,black]
\tikzstyle{dashededge} = [draw, very thick, dashed, black]
\tikzstyle{LabelStyleH} = [text=black, anchor=south]
\tikzstyle{LabelStyleH2} = [text=black, anchor=north]
\tikzstyle{LabelStyleV} = [text=black, anchor=east]
\tikzstyle{doubleedge} = [draw, thick, double distance=1pt, -,black]
\tikzstyle{hiddenedge} = [draw=none, thick, double distance=1pt, -,black]

\node[vertex] (v1) at (2,2){};
\node[vertex] (v2) at (3,2){};
\node[vertex] (v3) at (4,2){};
\node[vertex] (v4) at (1,1){};
\node[vertex] (v5) at (2,1){};
\node[vertex] (v6) at (3,1){};
\node[vertex] (v7) at (4,1){};
\node[vertex] (v8) at (5,1){};
\node[vertex] (v9) at (2.5,3){};
\node[vertex] (v10) at (3.5,3){};

\draw[edge] (v1) to node[LabelStyleH]{\Tiny$2$} (v2);
\draw[edge] (v2) to node[LabelStyleH]{\Tiny$3$} (v3);

\draw[edge] (v4) to node[LabelStyleH]{\Tiny$2$} (v5);
\draw[edge] (v5) to node[LabelStyleH]{\Tiny$3$} (v6);
\draw[edge] (v6) to node[LabelStyleH]{\Tiny$2$} (v7);
\draw[edge] (v7) to node[LabelStyleH]{\Tiny$3$} (v8);

\draw[doubleedge] (v9) to node[LabelStyleH]{\Tiny$2$} (v10);
\draw[hiddenedge] (v9) to node[LabelStyleH2]{\Tiny$3$} (v10);
\end{tikzpicture}
\end{center}
\end{proposition}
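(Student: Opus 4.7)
The plan is to reduce the classification to a finite check on $\S_4$ and then enumerate. By Proposition~\ref{p order preserving} together with Remark~\ref{r restriction}, the standardization map identifies each connected component of $\Res_{[i-1,i+2]}\G$ with a connected D$_0$ graph on a subset of $\S_4$ whose edges are colored from $\{2,3\}$, so it suffices to classify all such graphs. Definition~\ref{d D0 graph} forces each vertex to lie on at most one $2$-edge and at most one $3$-edge, so every component is a simple path or an even cycle with alternating colors.

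Next I would sort the $24$ elements of $\S_4$ according to which edges they admit: $\e{w}=\e{w_1w_2w_3w_4}$ admits a $2$-edge iff $\e{w_1w_2w_3}$ is non-monotone, and admits a $3$-edge iff $\e{w_2w_3w_4}$ is non-monotone. A direct count partitions $\S_4$ into $2$ isolated vertices ($\e{1234}$ and $\e{4321}$), six ``only-$2$'' vertices, six ``only-$3$'' vertices, and ten ``both'' vertices. The key local fact to check is that in every KR-square of $\S_4$, and for every Knuth-or-rotation choice, the $2$-edge at an only-$2$ vertex always ends at a both-vertex, and symmetrically for only-$3$ vertices. This rules out single-edge components and forces any component with at least one edge to have even path length, with an only-$2$ vertex at one end and an only-$3$ vertex at the other (if it is a path).

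Finally I would bound the lengths of paths and cycles by examining the subgraph $H$ of $\Res_{[i-1,i+2]}\G$ induced on the both-vertices. Tallying the four KR$_2$- and four KR$_3$-squares of $\S_4$ shows that $H$ has exactly four edges, two of each color, and that they concentrate at two special vertices: two of the four always contain $\e{2143}$, the other two always contain $\e{3412}$, and the sets of both-neighbors of these two vertices are disjoint. Hence each edge-containing component of $H$ is either a length-$2$ path centered at $\e{2143}$ or $\e{3412}$, or a $2$-cycle (arising precisely when the incident both-both $2$-edge and $3$-edge share both endpoints). Reattaching the only-neighbors forced by the local fact at the ends of each such component produces exactly the three configurations in the statement; every remaining vertex is either isolated or sits as the center of a $3$-vertex, $2$-edge component. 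The main obstacle is the centering-and-disjointness verification in this last step: it is a finite but somewhat tedious case check on the eight KR-squares of $\S_4$, and I do not see an appreciably shorter argument that bypasses the enumeration.
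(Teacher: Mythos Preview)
Your approach is correct and genuinely different from the paper's. The paper also reduces to $\S_4$ via Proposition~\ref{p order preserving}, but then invokes $\LSP_4$ (a forward reference to Corollary~\ref{c LSP45}, which rests on the noncommutative Schur function machinery of \textsection\ref{ss local schur positivity}) to conclude immediately that every component has an even number of edges. From there it argues by contradiction: if a $2$-$3$-string has length $\ge 7$ or is a cycle of length $>2$, one locates a vertex $\e{w}$ in the middle and shows, by tracking where the letters $\e{1}$ and $\e{4}$ sit, that one of $\e{w}, E_2(\e{w}), E_3(\e{w}), E_2E_3(\e{w}), E_3E_2(\e{w})$ fails to admit both a $2$- and a $3$-neighbor. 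Your route is more elementary and self-contained: you never use $\LSP_4$, replacing it with the structural observation that every both-both edge contains $\e{2143}$ or $\e{3412}$, and that these two hubs have disjoint both-neighborhoods. The paper's argument is shorter once $\LSP_4$ is available (and $\LSP_4$ is needed elsewhere anyway), while yours gives finer structural information and avoids the algebraic input entirely.

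One point to tighten: your ``key local fact'' (that the $2$-edge at an only-$2$ vertex ends at a both-vertex) rules out single-edge components, but it does \emph{not} by itself force even path length as you claim; a $3$-edge path only-$2$ --- both --- both --- only-$2$ is not excluded by that fact alone. What actually kills odd lengths $\ge 3$ is the extra observation, implicit in your last paragraph but worth stating, that \emph{every} neighbor of $\e{2143}$ and of $\e{3412}$ (not just the both-both ones) is a both-vertex. With that in hand, a length-$3$ path would have its unique both-both edge incident to one of the hubs, forcing the adjacent endpoint to be a both-vertex rather than only-$*$, a contradiction; and paths of length $\ge 5$ and cycles of length $\ge 4$ are already excluded by your bound on components of $H$.
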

\begin{proof}
Let  $H$ be a connected component of $\myRes_{K} \G$.
By Remark \ref{r restriction} and Proposition \ref{p order preserving}, we can assume that $H$ is a D$_0$~graph on a subset of $\S_4$.
Since every vertex of  $H$ has $i$-degree at most 1,
$H$ is a path or an even cycle. Moreover,
$H$ cannot be a path with an even number of vertices since
this would imply that the generating function of  $H$ is either
\begin{align*}
& Q_{\{1\}}(\mathbf{x}) + Q_{\{2,3\}}(\mathbf{x})+  (h-1) \big( Q_{\{1,3\}}(\mathbf{x}) + Q_{\{2\}}(\mathbf{x}) \big)
= Q_{\{1\}}(\mathbf{x}) + Q_{\{2,3\}}(\mathbf{x}) + (h-1) s_{(2,2)}(\mathbf{x}), \text{ or } \\
& Q_{\{1,2\}}(\mathbf{x}) + Q_{\{3\}}(\mathbf{x})+  (h-1) \big( Q_{\{1,3\}}(\mathbf{x}) +  Q_{\{2\}}(\mathbf{x}) \big)
=  Q_{\{1,2\}}(\mathbf{x}) + Q_{\{3\}}(\mathbf{x}) + (h-1) s_{(2,2)}(\mathbf{x}),
\end{align*}
where  $2h = |\ver(H)|$.
But this is impossible since these functions are not symmetric while the generating function of $H$ is Schur positive (by Corollary \ref{c LSP45}, below).

Now let  $(\e{x^1,\dots, x^t})$ be a 2-3-string for $H$. 
 Suppose for a contradiction that  $H$  is not one of the three graphs above.
Then either $t \ge 7$ or  $H$ is a cycle of length  $> 2$.
Set  $\e{w} = \e{x^{(t+1)/2}}$.
The  desired contradiction follows from the claim
\begin{equation} \label{e rotation 3 edge}
\parbox{10cm}{at least one of $\e{w},E_2(\e{w}), E_3(\e{w}), E_3E_2(\e{w}),E_2E_3(\e{w})$ does not admit both a $2$-neighbor and a $3$-neighbor.}
\end{equation}

Before proving \eqref{e rotation 3 edge}, we show the following for  any vertex  $\e{v}$ of  $H$:
\begin{align}
\parbox{13cm}{Suppose $\e{v_4} \in \{\e{1,4}\}$.  If $\e{v}$ admits both a 2-neighbor and a 3-neighbor, then $E_2(\e{v})$ does not.} \label{e 14 on end 1}\\[1mm]
\parbox{13cm}{Suppose $\e{v_1} \in \{\e{1,4}\}$.  If $\e{v}$ admits both a 2-neighbor and a 3-neighbor, then $E_3(\e{v})$ does not.} \label{e 14 on end 2}
\end{align}
To prove \eqref{e 14 on end 1}, suppose $\e{v_4 = 4}$, the case  $\e{v_4 = 1}$ being similar.
If  $\e{v}$ admits a 2-neighbor, then exactly one of $\sigma_2(\e{v}), \sigma_2(E_2(\e{v}))$ is $+$.
Since  $\e{v_4=4}$, there holds $\sigma_3(\e{v})=  \sigma_3(E_2(\e{v})) = +$, so exactly one of  $\e{v}, E_2(\e{v})$ admits a 3-neighbor.
Statement \eqref{e 14 on end 1} follows.  The proof of \eqref{e 14 on end 2} is similar.

We now prove \eqref{e rotation 3 edge} by showing that at least one of  $\e{w}, E_2(\e{w}), E_3(\e{w})$ satisfies
the first hypothesis of \eqref{e 14 on end 1} or \eqref{e 14 on end 2}.
If $\{\e{w_2, w_3}\} \ne \{\e{1,4}\}$, then  $\e{w}$ satisfies the first hypothesis of \eqref{e 14 on end 1} or \eqref{e 14 on end 2}.
If $\{\e{w_2, w_3}\} = \{\e{1,4}\}$, then $E_2(\e{w})$ (resp.  $E_3(\e{w})$) satisfies the first hypothesis of \eqref{e 14 on end 1} or \eqref{e 14 on end 2}
if  the 2-edge (resp. 3-edge) at $\e{w}$  is a  rotation edge.
The only  remaining possibility is that the 2-edge and 3-edge at  $\e{w}$ are Knuth,
but  this implies that  $H$ is a cycle of length 2,  contrary to our supposition.
This completes the proof of \eqref{e rotation 3 edge}.
\end{proof}

We can now show that axiom  $4'a$ in the definition of a D~graph always holds for D$_0$~graphs.  This was also proved as part of \cite[Theorem 6.1]{Sami2} by a computer check.
A similar result is proved in \cite[Theorem 5.38]{Sami} using similar methods.

\begin{proposition}\label{p D0 axiom 4'a}
D$_0$~graphs satisfy axiom $4'a$.
\end{proposition}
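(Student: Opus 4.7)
The plan is to reduce axiom~$4'a$ to a finite local analysis and then verify it by comparing generating functions case by case.

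First, I would apply Proposition~\ref{p order preserving} to assume $\G$ has vertex set in $\S_n$. Both generating functions in the statement of axiom~$4'a$ depend only on the local structure of $\G$ at positions $[i-3,i+1]$ around~$w$, so it suffices to analyze the component~$C$ of $\Res_{[i-3,i+1]}(\G,w)$, which is itself a D$_0$ graph of degree~$5$ (by the analogue for $K$-restrictions of Proposition~\ref{p component KR set}(a)).

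Next I would use the hypotheses to identify~$\{w,v\}$. Since $\sigma_{i-3}(w)\neq\sigma_{i-3}(v)$ forces $w_{i-2}\neq v_{i-2}$ and $\sigma_i(w)\neq\sigma_i(v)$ forces $w_i\neq v_i$, the edge $\{w,v\}$ must change both the first and last letters of the triple at positions $i-2,i-1,i$. Only the two rotation edges of a KR$_{i-1}$ square do this; the Knuth edges $\{\e{v\,bac\,w},\e{v\,bca\,w}\}$ and $\{\e{v\,acb\,w},\e{v\,cab\,w}\}$ fix the first or last letter of the triple respectively. Hence $\{w,v\}$ is a rotation edge, and the sign conditions further restrict where $w_{i-3}$ and $w_{i+1}$ lie relative to the letters of the triple. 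Using the reverse anti-involution of Proposition~\ref{p order preserving Irk only}(iii), which exchanges the two rotation-edge types, together with standardization, I may take the substrings of~$w$ and~$v$ at positions $[i-3,i+1]$ to be $\e{23154}$ and $\e{21534}$ respectively.

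I would then enumerate the possible structures of~$C$. The KR$_3$ square containing $\{w,v\}$ must be of type Rotation, while the KR$_2$ and KR$_4$ squares meeting $w$ or $v$ can each be of type Knuth, Rotation, or Undetermined; this yields a short list of configurations, each of which determines~$C$ completely (possibly after propagating through vertices that the added squares introduce). For each such~$C$, I would compute $\Res_{[i-3,i]}(C,w)$ and $\Res_{[i-2,i+1]}(C,w)$; by Proposition~\ref{p deg4restrictions} each is a $2$-path, $2$-cycle, $4$-path, or trivial component, and by Corollary~\ref{c symmetric bijectivizations} its generating function is a specific Schur-positive symmetric function of degree~$4$ (concretely, one of $s_{(4)}$, $s_{(3,1)}$, $s_{(2,2)}$, $s_{(2,1,1)}$, $s_{(1^4)}$, or the $4$-path generating functions $s_{(3,1)}+s_{(2,2)}$ and $s_{(2,1,1)}+s_{(2,2)}$). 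Matching these in each case yields the equality of generating functions required by axiom~$4'a$.

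The main obstacle is this case analysis; although finite, it is moderately tedious. The reverse symmetry and the rigid type of the central KR$_3$ square cut the number of cases significantly, but some case-by-case checking seems unavoidable, as is also the case in the analogous result of Assaf~\cite{Sami2}.
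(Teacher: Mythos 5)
Your proposal follows the same overall strategy as the paper: reduce via Proposition~\ref{p order preserving} to degree~$5$, use the hypotheses of axiom~$4'a$ to force the $(i-1)$-edge $\{w,v\}$ to be a rotation edge with essentially one possible pair of endpoints (up to symmetry), then invoke Proposition~\ref{p deg4restrictions} and Corollary~\ref{c symmetric bijectivizations} to compute and compare generating functions. Your reduction of the two endpoint cases via symmetry is sound (the paper simply says ``both cases are similar''), and your identification of the edge as rotational is the same observation the paper makes.

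Where the approaches diverge is in how the final generating-function equality is obtained. You propose to enumerate the possible types (Knuth/rotation/absent) of the KR$_2$ and KR$_4$ squares adjacent to $w$ and $v$, propagate, and compute the two restricted generating functions in each case. The paper avoids this enumeration entirely by noticing that $E_4(\e{21534})$ never admits a $3$-neighbor, regardless of whether the $4$-edge is Knuth or rotation. This single observation pins down the $3$-$4$-string through $w$ to a $4$-edge path with a uniquely determined sequence of colors, and moreover the signatures of the restricted vertices turn out not to depend on the edge types chosen. The generating function is then computed once as $s_{(2,2)}+s_{(3,1)}$, for both restrictions, with no branching at all. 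Your enumeration would reach the same conclusion, but it buys nothing here, whereas the paper's shortcut makes the proof compact. One small correction: in a D$_0$ graph (as opposed to a partial D$_0$ graph) a KR square is never of type Undetermined; the types you should enumerate for the ambient KR$_2$ and KR$_4$ squares are Knuth, Rotation, or Irrelevant (the last only when the square has at most one vertex in $W$).
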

\begin{proof}
Let  $\G$ be a D$_0$~graph.
Since axiom $4'a$ for  $\G$ depends only on $\myRes_K \G$ for  $|K| = 5$, by Proposition \ref{p order preserving} we may assume that the vertices of  $\G$ lie in $\S_5$ and the $i$ in the definition of axiom $4'a$ is $4$.
Let $\{\e{w},\e{v}\}$ be an  $i-1$-edge of  $\G$ as in \eqref{e 4'a}.
Since $\G$ satisfies axioms 1 and 2 (see Theorem~\ref{t D0 satisfy 123etc}, below),
the conditions \eqref{e 4'a} on $\e{w}$ imply that $\myRes_{[1, 4]}(\G, \e{w})$ and $\myRes_{[2, 5]}(\G, \e{w})$ each contain at least 3 edges.
Then by  Proposition \ref{p deg4restrictions}, $\myRes_{[1, 4]}(\G, \e{w})$ is a path with 4 edges and $\myRes_{[2, 5]}(\G, \e{w})$ is a path with  4 edges.

 Since a Knuth transformation only changes two adjacent letters, the conditions $\sigma_{i-3}(\e{w}) \neq \sigma_{i-3}(\e{v})$ and $\sigma_i(\e{w}) \neq \sigma_i(\e{v})$ imply that the $3$-edge  $\{\e{w},\e{v}\}$ must be a rotation edge. In fact, one can check that these conditions imply that $\{\e{w}, \e{v}\}$ is equal to $\{\e{21534}, \e{23154}\}$ or $\{\e{43512}, \e{45132}\}$.
Both cases are similar, so assume the former.
It is easy to see that no matter the type of the 4-edge at $\e{21534}$, the vertex $E_4(\e{21534})$ does not admit a 3-neighbor.
Hence the  3-4-string containing  $\e{w}$ is $\big( E_4(\e{21534}), \e{21534}, \e{23154}, E_4(\e{23154}), E_3 E_4(\e{23154}) \big)$; see Figure \ref{f axiom 4'a} (the vertices of $\G$ and types of the edges need not be as in the figure, but the numbers on the edges and the signatures of the words must be as in the figure).
It follows that the signatures of the vertices of $\myRes_{[2,5]} (\G,\e{w})$ are  $(++-,+-+,-+-,+-+,-++)$ and therefore its generating function is
$s_{(2,2)}(\mathbf{x}) + s_{(3,1)}(\mathbf{x})$.
A similar argument shows that this is also the generating function of $\myRes_{[1,4]} (\G,\e{w})$.
\begin{figure}
\begin{tikzpicture}[xscale = 2.2,yscale = 2]
\tikzstyle{vertex}=[inner sep=0pt, outer sep=4pt]
\tikzstyle{aedge} = [draw, thin, ->,black]
\tikzstyle{edge} = [draw, thick, -,black]
\tikzstyle{dashededge} = [draw, very thick, dashed, black]
\tikzstyle{LabelStyleH} = [text=black, anchor=south]
\tikzstyle{LabelStyleV} = [text=black, anchor=east]

 \node[vertex] (v1) at (1,3){\footnotesize$\e{13524}$};
 \node[vertex] (v2) at (2,3){\footnotesize$\e{15234}$};
 \node[vertex] (v3) at (2,2){\footnotesize$\e{21534}$};
 \node[vertex] (v4) at (2,1){\footnotesize$\e{21453}$};
 \node[vertex] (v5) at (3,3){\footnotesize$\e{31254}$};
 \node[vertex] (v6) at (3,2){\footnotesize$\e{23154}$};
 \node[vertex] (v7) at (3,1){\footnotesize$\e{23415}$};
 \node[vertex] (v8) at (4,1){\footnotesize$\e{24135}$};

 \draw[edge] (v1) to node[LabelStyleH]{\Tiny$\tilde{3} $} (v2);
 \draw[edge] (v2) to node[LabelStyleV]{\Tiny$\tilde{2} $} (v3);
 \draw[edge] (v3) to node[LabelStyleV]{\Tiny$\tilde{4} $} (v4);
 \draw[edge] (v3) to node[LabelStyleH]{\Tiny$\tilde{3} $} (v6);
 \draw[edge] (v5) to node[LabelStyleV]{\Tiny$\tilde{2} $} (v6);
 \draw[edge] (v6) to node[LabelStyleV]{\Tiny$\tilde{4} $} (v7);
 \draw[edge] (v7) to node[LabelStyleH]{\Tiny$\tilde{3} $} (v8);
\end{tikzpicture}
\caption{\label{f axiom 4'a}}
\end{figure}
\end{proof}

\subsection{Noncommutative flagged Schur functions}
\label{ss Noncommutative flagged Schur functions}
For the inductive computation of the noncommutative Schur functions $\mathfrak{J}_\lambda(\mathbf{u})$ in the next subsection, we need their flagged generalization.

For any positive integer $d$ and $m \in \{0,1,\dots,N\}$, recall that
\[
e_d([m])= \sum_{m \geq i_1 > i_2 > \cdots > i_d \geq 1} u_{i_1}u_{i_2}\cdots u_{i_d} \ \in \, \U,
\]
where  $[0] := \{\}$.  Also recall $e_0([m])=1$ and $e_{d}([m]) = 0$ for $d<0$.
Note that
\begin{align*}
e_d([0]) =&
\begin{cases}
  1 & \text{ if } d = 0, \\
  0 & \text{otherwise.}
\end{cases} \notag
\end{align*}
We will use the following fact several times in the proof of Theorem~\ref{t fat hook} below:
\begin{align}\label{e ek induction}
e_d([m]) =&~\e{m} e_{d-1}([{m-1}])+e_d([{m-1}]) & \text{if  $m > 0$ and  $d$ is any integer}.
\end{align}

Given a weak composition
$\alpha=(\alpha_1,\dots,\alpha_l)$ and  $\mathbf{n} = (n_1, n_2, \dots, n_l) \in \{0,1,\ldots,N\}^l$, define the \emph{noncommutative column flagged Schur function} $J_\alpha(\mathbf{n}) = J_\alpha(\mathbf{n})(\mathbf{u})$ by
\begin{align*}
J_{\alpha}(\mathbf{n})&
=\sum_{\pi\in \S_{l}}
\sgn(\pi) \, e_{\alpha_1+\pi(1)-1}([{n_1}]) e_{\alpha_2+\pi(2)-2}([{n_2}]) \cdots e_{\alpha_{l}+\pi(l)-l}([{n_l}]).
\end{align*}
Note that if $\lambda$ is a partition then $\mathfrak{J}_\lambda(\mathbf{u}) = J_{\lambda'}(N, N, \ldots, N)$,
so the noncommutative column flagged Schur functions generalize the noncommutative Schur functions defined earlier.

It follows from Lemma \ref{l es commute} that
\begin{align}\label{e elem sym Jswap}
J_\alpha(\mathbf{n}) \equiv -J_{\alpha_1, \ldots, \alpha_{j-1},\alpha_{j+1}-1,\alpha_j+1,\ldots,\alpha_l}(\mathbf{n}) \ (\bmod{\ \Irkst}) \quad \text{whenever  $n_j = n_{j+1}$}.
\end{align}
In particular,
\begin{align}\label{e elem sym J0}
J_\alpha(\mathbf{n}) \equiv 0 \ (\bmod{\ \Irkst}) \quad \text{whenever  $\alpha_j = \alpha_{j+1}-1$ and  $n_j=n_{j+1}$.}
\end{align}

\subsection{Local Schur positivity of D$_0$~graphs}
\label{ss local schur positivity}
We now  prove
that D$_0$~graphs satisfy  $\LSP_4$ and  $\LSP_5$
by showing that certain noncommutative Schur functions are positive sums of monomials in $\U/\Irkst$.

We first review  some terminology and fix conventions  related to  diagrams and tableaux.
Partition diagrams are drawn with the English (matrix-style) convention so that row (resp. column) labels start with 1 and increase from north to south (resp. west to east).
Let $\lambda$ be a partition.
A \emph{semistandard Young tableau} $T$ of shape $\lambda$ is the diagram of $\lambda$ together with a letter occupying each of its cells
such that entries strictly increase from north to south in each column and weakly increase from west to east in each row.
Write  $T_{r,c}$ for the entry of  $T$ in the  $r$-th row and  $c$-th column.
The \emph{row reading word} of  $T$, denoted $\reading(T)$, is the word obtained by concatenating the rows of  $T$ (reading each row left to right), starting with the bottom row.
The \emph{column reading word} of  $T$, denoted $\creading(T)$, is the word obtained by concatenating the columns of  $T$ (reading each column bottom to top), starting with the leftmost column.

For example, $\reading\Big({\tiny\tableau{1&2\\3&4\\5} }\Big)= \e{53412}$ and $\creading\Big({\tiny\tableau{1&3\\2&4\\5}}\Big) = \e{52143}$.

For a partition  $\lambda$ and a tuple $\mathbf{n} = (n_1,\ldots,n_l)$ of nonnegative integers,
let  $\SYT(\lambda)^\mathbf{n}$ denote the set of semistandard Young tableaux of shape $\lambda$ having no repeated letter
such that the entries in column  $c$ lie in $[n_c]$.

We now state and prove the full version of Theorem~\ref{t intro hook} from the introduction.  It is similar to Theorems 17 and 19 of \cite{LamRibbon}.

\begin{theorem}\label{t fat hook}
Let  $\mathbf{n} = (n_1,n_2, \dots, n_l)$ with $0\le n_1\le n_2 \le \cdots \le n_l$.
\begin{list}{\emph{(\roman{ctr})}}{\usecounter{ctr} \setlength{\itemsep}{2pt} \setlength{\topsep}{3pt}}
\item Let $\lambda$ be the hook shape $(a,1^b)$ ($a \ge 1$, $b = l-1 \ge 0$).  Then in $\U/\Irkst$,
\begin{align} \label{e fat hook1}
J_{\lambda}(\mathbf{n}) = \sum_{T\in \SYT(\lambda')^\mathbf{n}} \creading(T).
\end{align}
\item If  $a \ge 2$, $\lambda = (a,2)$, and  $l=2$, then in $\U/\Irkst$,
\begin{align} \label{e fat hook2}
J_{\lambda}(\mathbf{n}) = \sum_{T\in \SYT(\lambda')^\mathbf{n}, \ T_{1,2} > T_{2,1}} \creading(T) +
\sum_{T\in \SYT(\lambda')^\mathbf{n}, \ T_{1,2} < T_{2,1}} \reading(T).
\end{align}
\end{list}
In particular, $\mathfrak{J}_\lambda(\mathbf{u})$ is a positive sum of monomials in  $\U/\Irkst$ if  $\lambda$ is a hook shape or of the form $(a,2)'$.
\end{theorem}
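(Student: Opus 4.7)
Both parts are proved by direct algebraic manipulation of the Jacobi-Trudi expansion of $J_\lambda^{\mathbf{n}}$ in the algebra $\U/\Irkst$, using Lemma~\ref{l es commute} (commutativity of the $e_d(\mathbf{u})$'s), the recursion \eqref{e ek induction}, and the generating relations \eqref{e Irkst1}--\eqref{e Irkst2}. The ``in particular'' statement is immediate upon specializing $\mathbf{n} = (N, N, \ldots, N)$, which recovers $\mathfrak{J}_\lambda(\mathbf{u}) = J_{\lambda'}^{(N, \ldots, N)}$, noting that the conjugate of a hook is again a hook and the conjugate of a $(a,2)'$ shape is $(a,2)$.

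For part (i), I would induct on $b$. The base case $b = 0$ gives $J_{(a)}^{(n_1)} = e_a([n_1])$, matching $\sum_S \creading(T_S) = \sum_S (\text{decreasing sequence of } S)$ over $a$-subsets $S \subseteq [n_1]$. For the inductive step, Laplace expansion along the last column of the Jacobi-Trudi matrix produces a two-term recurrence, since only the $(b, b+1)$ and $(b+1, b+1)$ entries of that column are nonzero (equal to $e_0 = 1$ and $e_1([n_{b+1}])$, respectively):
\[
J_{(a, 1^b)}^{(n_1, \ldots, n_{b+1})} \;=\; J_{(a, 1^{b-1})}^{(n_1, \ldots, n_b)} \cdot e_1([n_{b+1}]) \;-\; \det M'_{(b, b+1)},
\]
where $\det M'_{(b, b+1)}$ is a minor identifiable, via commutativity of the $e_d$'s on equal flags and relation \eqref{e elem sym Jswap}, with a smaller flagged Jacobi-Trudi amenable to the induction hypothesis. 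On the tableau side, the matching recurrence splits $\SYT(\lambda')^\mathbf{n}$ according to whether the top-right entry $T_{1, b+1}$ is a ``new'' letter, producing a bijective correspondence with the algebraic decomposition.

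For part (ii), expand directly: $J_{(a, 2)}^{(n_1, n_2)} = e_a([n_1]) e_2([n_2]) - e_{a+1}([n_1]) e_1([n_2])$. Each product is a sum of length-$(a+2)$ words, and those with repeated letters vanish by \eqref{e Irkst2}. The surviving terms of $e_a([n_1]) e_2([n_2])$ correspond to disjoint pairs $(S, T)$ with $|S| = a$, $|T| = 2$, which I interpret as column-strict candidate tableaux $T'$ of shape $\lambda' = (2, 2, 1^{a-2})$. Three cases arise: (a) $T'$ is SSYT with $T'_{1, 2} > T'_{2, 1}$, contributing $\creading(T')$ directly; (b) $T'$ is SSYT with $T'_{1, 2} < T'_{2, 1}$, whose monomial (on the four critical letters $p < q < r < s$ occupying the top two rows of $T'$) has the form $\cdots u_r u_p u_s u_q$ and must be converted to $\reading(T') = \cdots u_r u_s u_p u_q$ via the four-term identity
\[
u_r u_p u_s u_q \,-\, u_r u_s u_p u_q \;\equiv\; u_r u_q u_p u_s \,-\, u_r u_q u_s u_p \pmod{\Irkst},
\]
obtained from \eqref{e Irkst1} by multiplying on the left by $u_r$; and (c) $T'$ is not SSYT, producing ``bad'' terms to be canceled. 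The two correction terms arising in case (b) cancel respectively against a bad term in case (c) (namely $u_r u_q u_s u_p$, the $\creading$ of the swapped pair $(\{q, r\}, \{p, s\})$) and against a term of $-e_{a+1}([n_1]) e_1([n_2])$ (namely $u_r u_q u_p u_s$, arising from $A = \{p, q, r\}$, $m = s$). Similar applications of \eqref{e Irkst1} pair the remaining case-(c) bad terms with the remaining terms of $-e_{a+1}([n_1]) e_1([n_2])$.

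\textbf{Main obstacle.} The principal difficulty lies in part (ii), where \eqref{e Irkst1} is a four-term identity rather than a two-term substitution: each rotation move introduces two correction terms that must be shown to cancel in a consistent global pairing. For each four-letter subset $\{p, q, r, s\}$ there are six $(S, T)$ pairings contributing to $e_a e_2$ and four $(A, m)$ pairings contributing to $e_{a+1} e_1$; one must verify that after systematic application of \eqref{e Irkst1} these ten terms consolidate exactly into the two desired SSYT contributions (in $\creading$ and $\reading$ form). For part (i), the analogous subtlety is identifying the correction $\det M'_{(b, b+1)}$ from the Laplace expansion as a recognizable smaller Jacobi-Trudi, which requires commutativity-based row manipulations in $\U/\Irkst$; this is relatively routine once one tracks how the missing row index interacts with the $e_d([n_j])$ entries using \eqref{e elem sym Jswap}.
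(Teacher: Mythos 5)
Your approach to both parts is genuinely different from the paper's, and part (i) has a real gap while part (ii) is only sketched. The paper proves (i) by induction on $n_1$ (and on $a$ and $b$), applying \eqref{e ek induction} to the first-row entries to split $J_{(a,1^b)}^\mathbf{n}$ as $\e{n_1}\,J_{(a-1,1^b)}^{(n_1-1,n_2,\dots,n_l)} + J_{(a,1^b)}^{(n_1-1,n_2,\dots,n_l)}$; both terms are tableau sums by induction and combine without cancellation, matching the partition of $\SYT(\lambda')^\mathbf{n}$ according to whether $T_{a,1}=n_1$. Your Laplace expansion is really along the last \emph{row} of the matrix $(e_{\alpha_i+j-i}([n_i]))_{i,j}$, with the two nonzero entries $e_0=1$ and $e_1([n_{b+1}])$ at positions $(b+1,b)$ and $(b+1,b+1)$ rather than $(b,b+1)$ and $(b+1,b+1)$; those are indexing slips, but the substance of the step is intact. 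The real problem is twofold. First, the recursion $J_{(a,1^b)}^\mathbf{n}=J_{(a,1^{b-1})}^{(n_1,\dots,n_b)}\,e_1([n_{b+1}])-M'$ has a subtraction, so getting to a positive monomial expression requires establishing a cancellation you never exhibit, whereas the paper's split is positive term by term. Second, and more fundamentally, the cofactor $M'$ (rows $1,\dots,b$, columns $1,\dots,b-1,b+1$) is \emph{not} of the form $J_\alpha^\mathbf{m}$: the omitted column makes the column index jump by two in the last slot, and this cannot be absorbed into the row-indexed composition $\alpha$. Its commutative analogue is a \emph{skew} Jacobi-Trudi determinant, not a straight one, and the paper's framework defines no skew $J$. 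Relation \eqref{e elem sym Jswap} cannot repair it since that identity applies only when $n_j=n_{j+1}$, and ``commutativity of the $e_d$'s on equal flags'' does not bear on the mismatch either.

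For part (ii), the paper reduces $a>2$ to $a=2$ as in (i), then reduces to $n_l\le 4$ via Proposition \ref{p order preserving Irk only}(i) and the decomposition $\U/\Irkst=\bigoplus_S(\U/\Irkst)_S$, and checks $J_{22}^{n_1\,4}$ for $n_1=1,2,3,4$ by short direct computations. Your global cancellation scheme is plausible --- for $a=2$ the ten monomials supported on a four-letter set $\{p<q<r<s\}$ do consolidate as you describe, and the constraint $d_1\le n_1$ needed to pair with the $e_{a+1}e_1$ term does follow from the case-(b) hypothesis $d_1<c_2\le n_1$ --- but you expressly flag it as the ``main obstacle'' without carrying it out, and for $a>2$ the interaction of the prefix $c_a\cdots c_3$ with the flag constraints introduces bookkeeping your sketch leaves unaddressed. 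The paper's standardization reduction avoids all of this. Your treatment of the ``in particular'' statement is correct.
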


\begin{proof}
Throughout the proof, we write $\e{x \equiv y}$ to mean that  $\e{x}$ and  $\e{y}$ are congruent~$\bmod{\Irkst}$.
We first prove (i). The proof is by induction on $a$, $b$, and the $n_i$.
If  $n_1 = 0$, then the first row of the matrix $\mat{e_{\lambda_r+c-r}([{n_r}]) }_{r,c \in [l]}$ is zero, hence its determinant $J_{\lambda}(\mathbf{n})$ is 0.
The right side of \eqref{e fat hook1} is certainly also 0 in this case because  $\SYT(\lambda')^\mathbf{n}$ is empty.

We now may assume  $n_1 > 0$. Set  $\mathbf{n}^- = (n_1-1,n_2,\ldots, n_l)$.
First suppose $a\ge 2$. Then \eqref{e ek induction} yields
\begin{align*}
J_{(a,1^b)}(\mathbf{n})
=&~ \e{n_1}J_{(a-1,1^b)}(\mathbf{n}^-) + J_{(a,1^b)}(\mathbf{n}^-)\\
\equiv&~ \sum_{T\in \SYT(\lambda')^\mathbf{n}, \ T_{a,1} = n_1} \creading(T) +
\sum_{T\in \SYT(\lambda')^{\mathbf{n}^-} } \creading(T) \\
=&~  \sum_{T\in \SYT(\lambda')^\mathbf{n}} \creading(T).
\end{align*}
The congruence is by induction on $n_1$. The last equality is by the fact that
$\{T \in \SYT(\lambda')^\mathbf{n} \mid T_{a,1} = n_1\} \sqcup \SYT(\lambda')^{\mathbf{n}^-}$
 is a set partition of $\SYT(\lambda')^\mathbf{n}$.

Now suppose $a=1$.
Applying \eqref{e ek induction}, we obtain
\begin{align*}
J_{(1,1^b)}(\mathbf{n})
=&~ \e{n_1}J_{(0,1^b)}(\mathbf{n}^-)+J_{(1,1^b)}(\mathbf{n}^-) \\
=&~ \e{n_1}\big( J_{(1^b)}(n_2, n_3,\dots)- J_{(1^b)}(n_1-1,n_3, \dots) \big)+J_{(1,1^b)}(\mathbf{n}^-) \\
\equiv&~ \sum_{T\in \SYT(\lambda')^\mathbf{n}, \ T_{1,1} = n_1} \creading(T) +
\sum_{T\in \SYT(\lambda')^{\mathbf{n}^-} } \creading(T) \\
=&~  \sum_{T\in \SYT(\lambda')^\mathbf{n}} \creading(T).
\end{align*}
Note that for $\alpha = (0,1^b)$,
$J_{\alpha}(\mathbf{n}^-)$ is a (noncommutative version of) the determinant of the matrix $\mat{e_{\alpha_r+c-r}([{n_r^-}]) }_{r,c \in [l]}$, which has first column $\mat{1 & 1 & 0 & \cdots & 0}^T$.
The second equality then follows from computing this determinant by expanding along this column.
The congruence is by induction on  $b$, and the last equality is by the fact that
$\{T \in \SYT(\lambda')^\mathbf{n} \mid T_{1,1} = n_1\} \sqcup \SYT(\lambda')^{\mathbf{n}^-}$
is a set partition of $\SYT(\lambda')^\mathbf{n}$.

We now prove (ii).
The statement for  $a > 2$ reduces to the  $a = 2$ case by a similar argument to the  $a \ge 2$ case in the proof of (i).
So we may assume  $\lambda = (2,2)$.

For each subset  $S$ of $\{u_1,\dots,u_N\}$, let $(\U/\Irkst)_S$ denote the subspace of~\,$\U/\Irkst$ spanned by the permutations of  $S$.
By Proposition \ref{p order preserving Irk only} (i) and the fact that $\U/\Irkst$ is the direct sum of the subspaces $(\U/\Irkst)_S$, the result for general  $\mathbf{n}$ reduces to the case $n_l \le |\lambda| = 4$.
It therefore suffices to show
\begin{align}
J_{\lambda}(1, 4) &\equiv0,        \label{e J22 1}\\
J_{\lambda}(2, 4) &\equiv \e{2143},  \label{e J22 2}\\
J_{\lambda}(3, 4) &\equiv\e{2143} + \e{3412},  \label{e J22 3}\\
J_{\lambda}(4, 4) &\equiv \e{2143} + \e{3412},  \label{e J22 4}
\end{align}
since these right hand sides are equal to
\[
\sum_{T\in \SYT(\lambda')^{n_1\, 4}, \ T_{1,2} > T_{2,1}} \creading(T) +
\sum_{T\in \SYT(\lambda')^{n_1 \, 4}, \ T_{1,2} < T_{2,1}} \reading(T),
\]
for  $n_1 = 1,2,3,4$, respectively.

The verification of \eqref{e J22 1} and \eqref{e J22 2} is immediate from the definition of  $J_{\lambda}(\mathbf{n})$.
For \eqref{e J22 3}, we compute directly
\begin{align*}
J_{22}(3, 4) &\equiv
\e{2143} + \e{3142} + \e{3241}  -\e{3214}  \\
& = \e{2143} + \e{3412 + 3(14-41)2} + \e{32(41-14)} \\
& \equiv \e{2143} + \e{3412 + 32(14-41)} + \e{32(41-14)}\\
& = \e{2143} + \e{3412}.
\end{align*}
Finally, \eqref{e J22 4} follows from \eqref{e J22 3} and
\begin{align*}
J_{22}(4, 4)
=\e{4}J_{12}(3, 4)+J_{22}(3, 4)
\equiv \e{4}J_{12}(3, 3)+J_{22}(3, 4)
\equiv J_{22}(3, 4).
\end{align*}
Here, the first equality is by \eqref{e ek induction}, the first congruence is because words with repeated letters are 0 in  $\U/\Irkst$, and the second congruence is by \eqref{e elem sym J0}.
\end{proof}

The  final ingredient to prove that   D$_0$~graphs satisfy $\LSP_4$ and  $\LSP_5$ is
a result about how the noncommutative Schur functions behave under the  anti-automorphism  $\rev : \U/\Irkst \to \U/\Irkst$ of Proposition \ref{p order preserving Irk only} (iii).

The \emph{noncommutative homogeneous symmetric functions} are given by
\[
h_d(\mathbf{u})=\sum_{1 \le i_1 \le i_2 \le \cdots \le i_d \le N}u_{i_1}u_{i_2}\cdots u_{i_d} \ \in \, \U.
\]
Let $\Lambda(\mathbf{u})$ be the subalgebra of~\,$\U/\Irkst$ generated by $e_1(\mathbf{u}), e_2(\mathbf{u}), \dots$,
and  let $\Lambda(\mathbf{x})$  denote the ring of symmetric functions in the commuting variables $x_1,x_2,\ldots$.
By Lemma \ref{l es commute}, there is an algebra homomorphism
\[\Psi: \Lambda(\mathbf{x}) \to \Lambda(\mathbf{u}), \ \ e_d(\mathbf{x}) \mapsto e_d(\mathbf{u}).\]

\begin{lemma} \label{l subalgebra h}
There holds $\Psi(h_d(\mathbf{x})) = h_d(\mathbf{u})$. Hence the $h_d(\mathbf{u})$ belong to  $\Lambda(\mathbf{u})$ and pairwise commute in $\U/\Irkst$.
\end{lemma}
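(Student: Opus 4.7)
The plan is to establish the identity by showing that the Newton-type relation between the $e_d$ and $h_d$ already holds at the level of the free algebra $\U$, not just modulo $\Irkst$. Once this identity is in hand, everything follows by induction and the commutativity of $\Lambda(\mathbf{u})$ in $\U/\Irkst$ provided by Lemma~\ref{l es commute}.

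First I would introduce the generating series
\[
E(t) = \sum_{d \ge 0} t^d e_d(\mathbf{u}), \qquad H(t) = \sum_{d \ge 0} t^d h_d(\mathbf{u}),
\]
viewed as elements of $\U[[t]]$ (with $t$ commuting with the $u_i$). Directly from the definitions, one checks the factorizations
\[
E(t) = (1 + t u_N)(1 + t u_{N-1}) \cdots (1 + t u_1), \qquad H(t) = (1 - t u_1)^{-1} (1 - t u_2)^{-1} \cdots (1 - t u_N)^{-1},
\]
where the former encodes strictly decreasing index sequences and the latter encodes weakly increasing sequences. Crucially, these hold in $\U$ before imposing any relations. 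Then $E(-t)\, H(t)$ telescopes to $1$ in $\U[[t]]$, yielding the identity
\[
\sum_{k=0}^d (-1)^k\, e_k(\mathbf{u})\, h_{d-k}(\mathbf{u}) = 0 \quad \text{in } \U, \quad \text{for all } d \ge 1.
\]

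Next I would rewrite this as the recursion $h_d(\mathbf{u}) = \sum_{k=1}^d (-1)^{k-1} e_k(\mathbf{u})\, h_{d-k}(\mathbf{u})$, which expresses $h_d(\mathbf{u})$ as a fixed integer polynomial $P_d$ in $e_1(\mathbf{u}), \dots, e_d(\mathbf{u})$, independent of the ambient algebra. In the commutative world $\Lambda(\mathbf{x})$, the identical recursion gives $h_d(\mathbf{x}) = P_d(e_1(\mathbf{x}), \dots, e_d(\mathbf{x}))$. Applying the algebra homomorphism $\Psi$ yields $\Psi(h_d(\mathbf{x})) = P_d(e_1(\mathbf{u}), \dots, e_d(\mathbf{u})) = h_d(\mathbf{u})$ in $\U/\Irkst$, which is the first claim. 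The second claim is then immediate: $h_d(\mathbf{u}) \in \Lambda(\mathbf{u})$ by definition, and $\Lambda(\mathbf{u})$ is commutative in $\U/\Irkst$ since it is generated by the pairwise-commuting $e_d(\mathbf{u})$ (Lemma~\ref{l es commute}).

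There is no serious obstacle here; the only thing worth emphasizing in the write-up is that the generating-function factorization $E(-t)H(t)=1$ survives without any commutativity because the specific orderings in the products of $1 \pm t u_i$ make the cancellation purely formal. Everything else reduces to standard symmetric function bookkeeping carried out in parallel in $\Lambda(\mathbf{x})$ and $\U/\Irkst$.
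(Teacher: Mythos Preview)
Your proof is correct and takes a genuinely different route from the paper's. The paper appeals to Theorem~\ref{t fat hook}~(i) to get $\mathfrak{J}_{(d)}(\mathbf{u}) = h_d(\mathbf{u})$ in $\U/\Irkst$ (a combinatorial statement requiring the flagged machinery and an inductive argument), and then identifies $\mathfrak{J}_{(d)}(\mathbf{u})$ with $\Psi(h_d(\mathbf{x}))$ via the Jacobi--Trudi determinant. Your argument instead exploits the purely formal telescoping $E(-t)H(t)=1$ in $\U[[t]]$ to get the Newton recursion $\sum_k (-1)^k e_k(\mathbf{u}) h_{d-k}(\mathbf{u})=0$ already in the free algebra, and then matches it with the same recursion in $\Lambda(\mathbf{x})$ via $\Psi$ and induction. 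This is more elementary and self-contained: it bypasses Theorem~\ref{t fat hook} entirely. The paper's route is natural in context since that theorem is already available, but yours would work even without it. One small wording point: the phrase ``a fixed integer polynomial $P_d$ \ldots independent of the ambient algebra'' is slightly loose, since the noncommutative expression the recursion produces in $\U$ (e.g.\ $e_1^3 - e_1 e_2 - e_2 e_1 + e_3$) is not literally the same as the commutative $P_d$ until you pass to $\U/\Irkst$, where Lemma~\ref{l es commute} makes them agree; your induction handles this correctly, but the write-up should say so explicitly.
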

\begin{proof}
By Theorem~\ref{t fat hook} (i), $\mathfrak{J}_{1^d}(\mathbf{u}) = h_d(\mathbf{u})$ in  $\U/\Irkst$.
The first statement then follows from
\[\mathfrak{J}_{1^d}(\mathbf{u}) = \det(e_{1+j-i}(\mathbf{u})) = \Psi(\det(e_{1+j-i}(\mathbf{x}))) = \Psi(h_d(\mathbf{x})),\]
where  the last equality is by the Jacobi-Trudi formula in the commuting variables $x_1, x_2, \ldots$.
The concluding statement follows from the fact that the algebra  $\Lambda(\mathbf{u})$ is commutative.
\end{proof}

\begin{proposition}\label{p rev J}
In  $\U/\Irkst$, there holds $\rev(\mathfrak{J}_\lambda(\mathbf{u})) = \mathfrak{J}_{\lambda'}(\mathbf{u})$.
\end{proposition}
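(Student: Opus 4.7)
The key observation is the identity $\rev(e_d(\mathbf{u})) = h_d(\mathbf{u})$ in $\U/\Irkst$. I will verify this by direct computation and then use it to transfer the classical involution $\omega : \Lambda(\mathbf{x}) \to \Lambda(\mathbf{x})$ (swapping $e_d \leftrightarrow h_d$, satisfying $\omega(s_\lambda) = s_{\lambda'}$) to the noncommutative setting via $\Psi$.

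First I would compute $\rev(e_d(\mathbf{u}))$ explicitly. Reindexing $j_k = i_{d-k+1}$ gives
\[
\rev(e_d(\mathbf{u})) = \sum_{N \geq i_1 > \cdots > i_d \geq 1} u_{i_d} u_{i_{d-1}} \cdots u_{i_1} = \sum_{1 \leq j_1 < j_2 < \cdots < j_d \leq N} u_{j_1} u_{j_2} \cdots u_{j_d}.
\]
On the other hand, in $\U/\Irkst$ every monomial with a repeated letter is zero by \eqref{e Irkst2}, so the sum defining $h_d(\mathbf{u})$ collapses to the same expression:
\[
h_d(\mathbf{u}) \equiv \sum_{1 \leq j_1 < \cdots < j_d \leq N} u_{j_1} \cdots u_{j_d} \pmod{\Irkst}.
\]
Hence $\rev(e_d(\mathbf{u})) = h_d(\mathbf{u})$ in $\U/\Irkst$. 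This is the only step that requires a calculation; I expect it to be the main (but quite modest) obstacle.

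Next I would observe that $\Lambda(\mathbf{u})$ is commutative by Lemma \ref{l es commute}, and contains $h_d(\mathbf{u})$ for all $d$ by Lemma \ref{l subalgebra h}. Since any anti-automorphism of a commutative algebra is an automorphism, $\rev$ restricts to an algebra \emph{automorphism} of $\Lambda(\mathbf{u})$, which by the previous paragraph sends $e_d(\mathbf{u}) \mapsto h_d(\mathbf{u})$.

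Finally, I would compare the two algebra homomorphisms $\rev \circ \Psi$ and $\Psi \circ \omega$ from $\Lambda(\mathbf{x})$ to $\Lambda(\mathbf{u})$. On the generator $e_d$ of $\Lambda(\mathbf{x})$, both send $e_d \mapsto h_d(\mathbf{u})$ (using Lemma \ref{l subalgebra h} for the second one), so they agree as homomorphisms. Applying this equality to $s_\lambda \in \Lambda(\mathbf{x})$ and using the Jacobi-Trudi formula to identify $\Psi(s_\mu) = \mathfrak{J}_\mu(\mathbf{u})$ for every partition $\mu$, we obtain
\[
\rev(\mathfrak{J}_\lambda(\mathbf{u})) \;=\; \rev(\Psi(s_\lambda)) \;=\; \Psi(\omega(s_\lambda)) \;=\; \Psi(s_{\lambda'}) \;=\; \mathfrak{J}_{\lambda'}(\mathbf{u}),
\]
which is the desired identity.
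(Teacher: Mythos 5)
Your proof is correct and rests on exactly the same two ingredients as the paper's: the identity $\rev(e_d(\mathbf{u})) = h_d(\mathbf{u})$ in $\U/\Irkst$ (a direct consequence of \eqref{e Irkst2}), and Lemma \ref{l subalgebra h} together with the dual Jacobi--Trudi formulas transported via $\Psi$. The paper simply applies $\Psi$ to the determinantal identity \eqref{e hse} and then applies $\rev$, while you repackage the same reasoning as an agreement of the algebra homomorphisms $\rev\circ\Psi$ and $\Psi\circ\omega$ on the generators $e_d(\mathbf{x})$; this is a clean reformulation but not a genuinely different argument.
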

\begin{proof}
By the version of the Jacobi-Trudi formula expressing $s_\lambda(\mathbf{x})$ in terms of the  $h_d(\mathbf{x})$ and the version expressing $s_\lambda(\mathbf{x})$ in terms of the $e_d(\mathbf{x})$,
\begin{equation}\label{e hse}
\det(h_{\lambda_i+j-i}(\mathbf{x})) = s_{\lambda}(\mathbf{x}) = \det(e_{\lambda'_i+j-i}(\mathbf{x})).
\end{equation}
The identity $\rev(e_d(\mathbf{u})) = h_d(\mathbf{u})$ holds in  $\U/\Irkst$ because of relation \eqref{e Irkst2}.
Hence in  $\U/\Irkst$,
\[\mathfrak{J}_{\lambda'}(\mathbf{u}) = \det(e_{\lambda_i+j-i}(\mathbf{u})) = \rev(\det(h_{\lambda_i+j-i}(\mathbf{u}))) = \rev(\mathfrak{J}_{\lambda}(\mathbf{u})). \]
The last equality is obtained by applying  $\Psi$ to \eqref{e hse} and using Lemma \ref{l subalgebra h}.
\end{proof}

\begin{corollary} \label{c LSP45}
D$_0$~graphs satisfy  $\LSP_4$ and $\LSP_5$.
\end{corollary}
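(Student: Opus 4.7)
The plan is to combine Corollary \ref{c symmetric bijectivizations} with the positivity results already proved for $\mathfrak{J}_\lambda(\mathbf{u})$. First I would reduce to the following statement: for every partition $\lambda$ with $|\lambda| \in \{4,5\}$, the noncommutative Schur function $\mathfrak{J}_\lambda(\mathbf{u})$ is a positive sum of monomials in $\U/\Irkst$. To see that this suffices, let $\G$ be a D$_0$ graph and $K$ an interval of size $4$ or $5$. By Remark \ref{r restriction}, every connected component $H$ of $\Res_K \G$ is (up to relabeling cells via Proposition \ref{p order preserving}, which preserves signatures and hence the generating function) a D$_0$ graph whose vertices are words of length $|K|$. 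By Corollary \ref{c symmetric bijectivizations}, the generating function of $H$ is symmetric with
\[
[s_\lambda(\mathbf{x})]\,\Delta\!\Bigl(\sum_{\e{w}\in\ver(H)}\e{w}\Bigr) = \Bigl\langle \mathfrak{J}_\lambda(\mathbf{u}),\; \sum_{\e{w}\in\ver(H)}\e{w}\Bigr\rangle.
\]
Because $\ver(H)$ is a KR set, Proposition-Definition \ref{pd KR set} says $\sum_{\e{w}\in\ver(H)}\e{w} \in (\Irkst)^\perp$, so this pairing is well defined modulo $\Irkst$; if $\mathfrak{J}_\lambda(\mathbf{u})$ is a positive sum of monomials in $\U/\Irkst$, the coefficient is a nonnegative integer.

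Next I would enumerate the partitions of $4$ and $5$ and check that each lies in a class covered by Theorem \ref{t fat hook} or its conjugate. The hooks $(4),(3,1),(2,1,1),(1^4)$ and $(5),(4,1),(3,1,1),(2,1,1,1),(1^5)$ are handled directly by Theorem \ref{t fat hook}(i). The partitions $(2,2)$ and $(3,2)$, being of the form $(a,2)$ with $a\geq 2$, are handled by Theorem \ref{t fat hook}(ii). The only remaining partition is $(2,2,1)$, whose conjugate is $(3,2)$; I would apply Proposition \ref{p rev J}, which gives $\rev(\mathfrak{J}_{(3,2)}(\mathbf{u})) = \mathfrak{J}_{(2,2,1)}(\mathbf{u})$ in $\U/\Irkst$, and then use the fact that the anti-automorphism $\rev$ of Proposition \ref{p order preserving Irk only}(iii) sends monomials to monomials; hence a positive monomial expression for $\mathfrak{J}_{(3,2)}(\mathbf{u})$ yields one for $\mathfrak{J}_{(2,2,1)}(\mathbf{u})$.

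Combining the two steps gives Schur positivity of the generating function of every component of every $K$-restriction of $\G$ for $|K|\in\{4,5\}$, which is precisely $\LSP_4$ and $\LSP_5$. The main conceptual point (and the only place anything nontrivial happens) is the case-check in the second paragraph; there is no real obstacle because the list of partitions of $4$ and $5$ is completely covered by hooks, $(a,2)$-shapes, and their conjugates, each of which is accessible via already-proved tools.
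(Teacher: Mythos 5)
Your proposal follows essentially the same route as the paper: reduce via Remark \ref{r restriction}, Proposition \ref{p order preserving}, and Corollary \ref{c symmetric bijectivizations} to the statement that $\mathfrak{J}_\lambda(\mathbf{u})$ is a positive sum of monomials in $\U/\Irkst$ for all $\lambda$ of size at most $5$, then cover the partitions using Theorem \ref{t fat hook} together with Proposition \ref{p rev J}. One small bookkeeping slip: the concluding sentence of Theorem \ref{t fat hook} gives positivity for $\lambda$ a hook or of the form $(a,2)'=(2,2,1^{a-2})$ (since $J^{N\cdots N}_{(a,2)} = \mathfrak{J}_{(a,2)'}(\mathbf{u})$), so at size $5$ it is $(2,2,1)$ that is covered directly and $(3,2)$ that requires the $\rev$-argument, not the other way around; since $(2,2)$ is self-conjugate the size-$4$ check is unaffected and your overall case coverage is still complete.
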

\begin{proof}
Let  $H$ be the  $K$-restriction of a D$_0$~graph for $K$ of size 4 or 5.
Theorem~\ref{t fat hook} and  Proposition \ref{p rev J} imply that  $\mathfrak{J}_\lambda(\mathbf{u})$  is a positive sum of monomials in  $\U/\Irkst$
for all $\lambda$ of size  $\leq 5$.
Then by Corollary \ref{c symmetric bijectivizations} and Remark \ref{r restriction},
the generating function of each connected component of  $H$ is Schur positive.
\end{proof}

The following theorem summarizes the results of this section.
The similar result \cite[Theorem 6.1]{Sami2} is proved by a computer check.
\begin{theorem}\label{t D0 satisfy 123etc}
D$_0$~graphs satisfy axioms 1, 2, 3, $4'a$ and  $\LSP_4$ and  $\LSP_5$.  Hence a  D$_0$~graph is a  D~graph if and only if it satisfies axioms $4'b$ and 5.
\end{theorem}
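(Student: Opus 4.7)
The plan: most of the theorem has already been assembled in this section. Axiom $4'a$ is exactly Proposition~\ref{p D0 axiom 4'a}, and $\LSP_4$ and $\LSP_5$ are Corollary~\ref{c LSP45}. What remains is to verify axioms 1, 2, 3 for an arbitrary D$_0$ graph $\G$, and then to read off the equivalence in the second sentence.

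For axiom 1, I would note that the three letters $\e{w_{i-1}}, \e{w_i}, \e{w_{i+1}}$ of a word $\e{w} \in \ver(\G) \subseteq \W_n$ form one of the patterns $\e{bac}$, $\e{bca}$, $\e{acb}$, $\e{cab}$ (for some $a<b<c$) precisely when exactly one of the pairs $(i-1,i), (i,i+1)$ is a descent, equivalently when $\sigma(\e{w})_{i-1} = -\sigma(\e{w})_i$. In this case $\e{w}$ lies in a unique KR$_i$ square, and clauses (ii)--(v) of Definition~\ref{d D0 graph} force $\e{w}$ to lie on exactly one $i$-edge; if instead the three letters are monotone, no KR$_i$ square contains $\e{w}$ and so $\e{w}$ admits no $i$-neighbor. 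This is axiom 1.

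For axioms 2 and 3, I would inspect the four edge types in Definition~\ref{d D0 graph}(ii)--(v). Any such edge $\{\e{w},\e{x}\}$ only alters positions $i-1,i,i+1$, so $\sigma_h$ is unchanged for $h < i-2$ and $h > i+1$. A short case check over the four patterns $\e{bac}, \e{bca}, \e{acb}, \e{cab}$ shows that both $\sigma_{i-1}$ and $\sigma_i$ are flipped by each of the four edge types, giving axiom 2. For axiom 3, one compares $\e{w_{i-2}}$ (respectively $\e{w_{i+2}}$) with the trio $\{a,b,c\}$ in each case and verifies that whenever $\sigma_{i-2}$ is flipped by the edge one must have $\sigma(\e{w})_{i-2} = -\sigma(\e{w})_{i-1}$, and similarly on the right.

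The second sentence is then immediate: by Definition in \textsection\ref{ss D graphs}, a D graph is a signed colored graph satisfying axioms 1, 2, 3, $4'a$, $4'b$, 5 together with $\LSP_4$ and $\LSP_5$. The first sentence of the theorem supplies all of these except possibly $4'b$ and 5 for any D$_0$ graph, so a D$_0$ graph is a D graph iff it additionally satisfies $4'b$ and 5. I do not foresee any real obstacle---axioms 1--3 are entirely local and reduce to inspection of the four patterns at positions $i-1,i,i+1$, while the second sentence is a purely formal consequence of the first.
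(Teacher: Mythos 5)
Your argument is correct, and most of it runs along the same lines as the paper's: axiom 1 via the observation that $\e{w}$ lies in a (unique) KR$_i$ square exactly when $\sigma_{i-1}(\e{w})\neq\sigma_i(\e{w})$, axiom 2 by a local inspection of the four patterns, $4'a$ from Proposition~\ref{p D0 axiom 4'a}, $\LSP_4$ and $\LSP_5$ from Corollary~\ref{c LSP45}, and the second sentence as a formal consequence. (For axiom 2, I'll note that the paper contents itself with the remark that Knuth/rotation transformations change only three consecutive letters, which directly handles $h<i-2$ and $h>i+1$; your explicit observation that $\sigma_{i-1}$ and $\sigma_i$ are both flipped in all four cases is actually slightly more careful.)

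The one place where you take a genuinely different route is axiom 3. You propose a direct case analysis: for each of the four edge types, compare $\e{w_{i-2}}$ (resp.\ $\e{w_{i+2}}$) against the trio $\{a,b,c\}$ and check that whenever $\sigma_{i-2}$ (resp.\ $\sigma_{i+1}$) flips, the required relation $\sigma_{i-2}=-\sigma_{i-1}$ (resp.\ $\sigma_{i+1}=-\sigma_i$) already holds. This does work: e.g.\ for the rotation edge $\e{bac}\leftrightarrow\e{acb}$, $\sigma_{i-2}$ flips only when $a<\e{w_{i-2}}<b$, and one checks the signs directly; the Knuth edges only move one of the two boundary letters, so there only one side needs checking, and the conclusion is automatic from $a<c$. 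By contrast, the paper avoids the casework entirely by observing that, granting axiom 1, axiom 3 is equivalent to every component of every size-$4$ restriction having more than one edge, and then citing $\LSP_4$ (which it has just proved) together with a remark of Assaf. The paper's route is shorter given that $\LSP_4$ is already in hand and fits the paper's theme of deriving local structure from noncommutative Schur positivity; your route is more elementary and self-contained, not relying on $\LSP_4$ at all. Either is a valid proof.
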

\begin{proof}
Axiom 1 is built into the definition of a D$_0$~graph because if  $\e{w}$ is a vertex of a D$_0$~graph, then
\[
\text{$\e{w}$ admits an  $i$-neighbor  $\iff$ $\e{w}$ belongs to a KR$_i$ square  $\iff$ $\sigma_{i-1}(\e{w}) \neq \sigma_i(\e{w})$.}
\]
Axiom 2 is also clear because Knuth and rotation transformations only involve three consecutive letters.

Axiom $4'a$ was proved in Proposition \ref{p D0 axiom 4'a} and $\LSP_4$ and  $\LSP_5$ were just proved in  Corollary \ref{c LSP45}.
Assuming axiom 1, axiom 3 is equivalent to the condition that any component of any $K$-restriction with $|K| = 4$ has more than 1 edge.
This is implied by $\LSP_4$ (for more about this, see \cite[Remark 5.2]{Sami}).
\end{proof}

\section{LLT polynomials}
\label{s LLT}

LLT polynomials are certain  $q$-analogs of products of skew Schur functions, first defined by Lascoux, Leclerc, and Thibon in \cite{LLT}.
There are two versions of LLT polynomials (which we distinguish following the notation of \cite{GH}): the combinatorial LLT polynomials of \cite{LLT} defined using spin, and the new variant combinatorial LLT polynomials of \cite{HHLRU} defined using inversion numbers. 
Although the theory of noncommutative Schur functions is well suited to studying the former (see \cite{LamRibbon}), we prefer to work with the latter to follow \cite{Sami,BLamLLT} and because inversion numbers are easier to calculate than spin.

In this section, we first recall an expression for LLT polynomials in terms of quasisymmetric functions from  \cite{Sami}.  We then introduce a simplified version  $\U_q/\Jlamst{k}$ of Lam's algebra of ribbon Schur operators from \cite{LamRibbon} and show how LLT polynomials and their Schur expansions can be interpreted in terms of this algebra.
We then connect $\U_q/\Jlamst{k}$ to the algebra $\U/\Irkst$ and the graphs  $\G^{(k)}_{c,D}$ defined by Assaf \cite{Sami}.

\subsection{LLT polynomials in terms of quasisymmetric functions}
Recall our conventions for partition diagrams from \textsection\ref{ss local schur positivity}.
A \emph{skew shape}  $\lambda/\mu$, where $\mu \subseteq \lambda$ are partitions, is the diagram obtained by removing the cells of $\mu$ from the diagram of $\lambda$.
The \emph{content} of a cell $(i,j)$ of a skew shape is $j-i$.
A \emph{skew shape with contents} is an equivalence class of skew shapes, where two skew shapes are equivalent if there is a content and
order preserving bijection between their diagrams.
A \emph{$k$-tuple of skew shapes with contents} is a $k$-tuple $\bm{\beta} =(\beta^{(0)},\dots,\beta^{(k-1)})$ such that each  $\beta^{(i)}$ is a skew shape with contents.
The \emph{shifted content} of a cell  $z$ of  $\bm{\beta}$ is
\[\tilde{c}(z) = k\cdot c(z)+i,\]
when $z \in \beta^{(i)}$ and where $c(z)$ is the usual content of $z$ regarded as a cell of $\beta^{(i)}$.

For a $k$-tuple $\bm{\beta} =(\beta^{(0)},\dots,\beta^{(k-1)})$ of skew shapes with contents such that
\begin{align}
\text{each $\beta^{(i)}$ contains no $2 \times 2$ square and its shifted contents lie in  $[N]$,}\label{e beta condition}
\end{align}
define  $\Wi{k}(\bm{\beta}) \subseteq \W$ to be the set of words $\e{v}$ such that
\begin{itemize}
\item $\e{v}$ is a rearrangement of the shifted contents of $\bm{\beta}$,
\item  for each  $i$ and each pair  $z, z'$ of cells of  $\beta^{(i)}$ such that $z'$
lies immediately east or south of  $z$, the letter $\tilde{c}(z)$ occurs before  $\tilde{c}(z')$ in  $\e{v}$.
\end{itemize}
Also define the following statistic on words  $\e{v}\in \W$:
\begin{align*}
\invi{k}(\e{v}) &= \big| \big\{ \, (i,j)\mid \text{$i<j$ and $0<\e{v_i}-\e{v_j}<k$} \, \big\} \big|.
\end{align*}

\begin{example}
\setlength{\cellsize}{1.9ex}
\label{ex LLT}
Let $\bm{\beta}$ be the 3-tuple $\left(\partition{&~\\&},\partition{&~&~\\&~&},\partition{&&~\\&~&~}\right) =
(2,32,33)/(1,11,21)$
of skew shapes with contents.
Its shifted contents are
\[{\tiny\left(\tableau{&3\\&},\tableau{&4&7\\&1&},\tableau{&&8\\&2&5}\right)}.\]
Some elements of  $\Wi{3}(\bm{\beta})$, together with their  $3$-inversion numbers, are
\[
\begin{array}{l|ccccc}
\e{v} &
\e{2413857} & \e{3417285}& \e{4871235} &\e{8341275}&  \e{2853417}\\[1.4mm]
\invi{3}(\e{v}) & 3 & 4 & 4&5 & 5
\end{array}
\]
\end{example}

The next proposition is an adaptation of \cite[Corollary 4.3]{Sami}, which expresses the new variant combinatorial LLT polynomials in terms of quasisymmetric functions.  We take this as the definition of these polynomials; they were originally defined in  \cite{HHLRU}  as a sum over  $k$-tuples of semistandard tableaux with an inversion statistic.
Be aware that the words used here are inverses of those used in \cite{Sami}; see \cite[\textsection2.5--2.6]{BLamLLT} for details.
\begin{proposition}[{\cite[Proposition 2.8]{BLamLLT}}]\label{p assafi}
Let  $\bm{\beta}$ be a  $k$-tuple of skew shapes with contents satisfying \eqref{e beta condition}.  The \emph{new variant combinatorial LLT polynomial} indexed by  $\bm{\beta}$ is
\[\mathcal{G}_{\bm{\beta}}(\mathbf{x};q) = \sum_{\e{v}\in\Wi{k}(\bm{\beta})}q^{\invi{k}(\e{v})}Q_{\Des(\e{v})}(\mathbf{x}). \]
\end{proposition}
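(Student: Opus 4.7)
The plan is to derive this from the original HHLRU definition \cite{HHLRU} of new variant LLT polynomials via standardization, closely mirroring Assaf's derivation of her \cite[Corollary 4.3]{Sami} but using the inverse word convention of \cite{BLamLLT}. The HHLRU formula is
\[
\mathcal{G}_{\bm{\beta}}(\mathbf{x};q) = \sum_{\bm{T}} q^{\inv(\bm{T})}\, x^{\bm{T}},
\]
where $\bm{T}$ ranges over $k$-tuples of semistandard Young tableaux of shape $\bm{\beta}$. A key fact I would use is that two cells of $\bm{\beta}$ form an \emph{attacking pair} $(u,u')$ precisely when $0 < \tilde{c}(u') - \tilde{c}(u) < k$, and $\inv(\bm{T})$ counts attacking pairs with $\bm{T}(u) > \bm{T}(u')$.

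First I would standardize: each $\bm{T}$ with $n$ cells decomposes uniquely as a pair $(\bm{S}, \mathbf{i})$, where $\bm{S}$ is a standard $k$-tuple of shape $\bm{\beta}$ (a bijection from cells of $\bm{\beta}$ to $[n]$ that is column-strict and row-weak within each component) and $\mathbf{i} = (i_1 \le \cdots \le i_n)$ is a weakly increasing sequence with $i_j < i_{j+1}$ whenever $j \in \Des(\bm{S})$, where $\Des(\bm{S})$ is the set of $j$ such that the cell containing $j+1$ has strictly smaller shifted content than the cell containing $j$. Standardization preserves $\inv$, and summing $x_{i_1}\cdots x_{i_n}$ over valid $\mathbf{i}$ yields $Q_{\Des(\bm{S})}(\mathbf{x})$, so
\[
\mathcal{G}_{\bm{\beta}}(\mathbf{x};q) = \sum_{\bm{S}} q^{\inv(\bm{S})}\, Q_{\Des(\bm{S})}(\mathbf{x}).
\]

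Next I would set up a bijection between standard $k$-tuples $\bm{S}$ of shape $\bm{\beta}$ and words $\e{v} \in \Wi{k}(\bm{\beta})$, sending $\bm{S}$ to the word $\e{v}$ whose $j$-th letter is $\tilde{c}(z)$, where $z$ is the cell labeled $j$ in $\bm{S}$. The column-strict and row-weak conditions on $\bm{S}$ within each $\beta^{(a)}$ say precisely that $\bm{S}(z) < \bm{S}(z')$ whenever $z'$ lies immediately east or north of $z$, which is exactly the east/north order condition defining $\Wi{k}(\bm{\beta})$. A direct comparison of definitions gives $\Des(\bm{S}) = \Des(\e{v})$.

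The remaining and most delicate step is matching the inversion statistics: I must verify $\inv(\bm{S}) = \invi{k}(\e{v})$. An attacking pair $(u,u')$ with $\tilde{c}(u) < \tilde{c}(u')$ is counted in $\inv(\bm{S})$ iff $\bm{S}(u) > \bm{S}(u')$. Writing $j = \bm{S}(u)$ and $i = \bm{S}(u')$, this becomes the condition $i < j$ with $0 < \e{v}_i - \e{v}_j = \tilde{c}(u') - \tilde{c}(u) < k$, which is precisely what $\invi{k}(\e{v})$ counts. Combining the three steps yields the formula. The main obstacle is convention-matching: since the words of \cite{BLamLLT} are inverse to Assaf's in \cite{Sami}, one must take care that positions in $\e{v}$ are indexed by $\bm{S}$-labels (rather than the reverse), so that the direction of the attacking-inversion inequality lines up with the $0 < \e{v}_i - \e{v}_j < k$ appearing in the definition of $\invi{k}$.
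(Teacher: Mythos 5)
The paper does not actually prove this proposition: it cites \cite[Proposition 2.8]{BLamLLT} and explicitly declares ``We take this as the definition of these polynomials.'' So you have supplied a proof where the paper supplies only a reference. Your route---starting from the HHLRU definition, standardizing to pass from a sum over SSYT-tuples to a sum of Gessel quasisymmetric functions indexed by standard tuples, then bijecting standard tuples with words in $\Wi{k}(\bm{\beta})$ and matching descent and inversion statistics---is exactly the kind of argument the cited reference carries out, and it is also the content of Assaf's \cite[Corollary 4.3]{Sami} once the convention change (inverse words) is accounted for. You correctly flag that convention change as the main hazard, which is the same caveat the paper itself makes in the sentence preceding the proposition.

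Two details you should pin down before this counts as rigorous. First, the claim that cells attack precisely when $0<\tilde c(u')-\tilde c(u)<k$ must be checked against the exact definition in \cite{HHLRU}; some presentations of the attacking condition use $\le k$ or phrase it in terms of same-content / content-off-by-one pairs, and a boundary-case mismatch would make $\inv(\bm S)\ne\invi{k}(\e{v})$. Second, the standardization step is doing nontrivial work: you need to say in which order equal entries of $\bm T$ are broken when producing $\bm S$, and verify both that this rule makes $\Des(\bm S)$ (``label $j+1$ sits at strictly smaller shifted content than label $j$'') the correct recession set so that summing $x^{\mathbf i}$ over compatible $\mathbf i$ gives $Q_{\Des(\bm S)}$, and that it preserves $\inv$, i.e.\ that an attacking pair with equal $\bm T$-values is never counted and that the tie-breaking order is consistent with this. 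With those two points nailed down, your proof is a valid self-contained substitute for the citation.
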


\begin{remark}
There is a statement similar to Proposition \ref{p assafi} without the restriction \eqref{e beta condition} on  $\bm{\beta}$, however in order to be compatible with
the convention in this paper of only focusing on words with no repeated letter, we chose to simplify the discussion in this section
to this special case.
For the same reason, we work with a quotient of Lam's algebra by the relation \eqref{e Irkst2} instead of the original algebra defined by Lam.
See \cite{BF} for the full treatment without this simplification.
\end{remark}

\subsection{Lam's algebra of ribbon Schur operators}
\label{ss Lams algebra}

Lam \cite{LamRibbon} defines  an algebra of ribbon Schur operators, which gives an elegant algebraic framework for LLT polynomials.  It  sets the stage for  applying the theory of noncommutative Schur functions from \cite{FG}  to the problem of computing Schur expansions of LLT polynomials.
We now introduce a simplified version of this algebra and review how the Schur expansions of certain LLT polynomials can be interpreted using this algebra and
Theorem~\ref{t basics}.

Let $\QQ(q)$ be the field of rational functions in the indeterminate $q$ with coefficients in $\QQ$ and let $\U_q = \QQ(q) \tsr_{\ZZ} \U$.
Let  $\U_q/\Jlamst{k}$ be the quotient of~\,$\U_q$ by the following relations (let $\Jlamst{k}$ denote the corresponding two-sided ideal of~\,$\U_q$):
\begin{alignat}{3}
&\e{ac} = \e{ca} \qquad &&\text{for $c-a > k$,} \label{e far commute} \\
&\e{ab} = q^{-1} \e{ba} \qquad &&\text{for $0<b-a<k$,} \label{e q commute}\\
&\e{w} = 0 \qquad &&\text{for words $\e{w}$ with a repeated letter.} \label{e no repeat Lam}
\end{alignat}

The \emph{new variant $q$-Littlewood-Richardson coefficients}, denoted $\mathfrak{c}_{\bm{\beta}}^\lambda(q)$, are the coefficients of the Schur expansion of the new variant
combinatorial LLT polynomials, i.e.
\[\mathcal{G}_{\bm{\beta}}(\mathbf{x};q) = \sum_{\lambda}\mathfrak{c}_{\bm{\beta}}^\lambda(q)s_\lambda(\mathbf{x}).\]
The new variant $q$-Littlewood-Richardson coefficients are known to be polynomials in $q$ with nonnegative integer coefficients \cite{LT00, GH}.
We now express these coefficients (or rather, the subset of them that fit with our simplified setup) using the theory of noncommutative Schur functions as was done in \cite{LamRibbon}.

\begin{proposition}
\label{p llt formula}
Let $\bm{\beta}$ be a  $k$-tuple of skew shapes with contents satisfying \eqref{e beta condition}.
Then
\[ f := \sum_{\e{v}\in\Wi{k}(\bm{\beta})} q^{\invi{k}(\e{v})}\e{v}\in (\Jlamst{k})^\perp, \]
and the new variant combinatorial LLT polynomial can be written as
\[ \mathcal{G}_{\bm{\beta}}(\mathbf{x};q) = \Delta(f). \]
Hence the new variant $q$-Littlewood-Richardson coefficient is given by
\[\mathfrak{c}_{\bm{\beta}}^\lambda(q) = \langle \mathfrak{J}_{\lambda}(\mathbf{u}), f \rangle.  \]
\end{proposition}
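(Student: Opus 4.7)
The plan is to apply Theorem \ref{t basics} to the ideal $\Jlamst{k}$, working $\QQ(q)$-linearly since the Fomin-Greene argument behaves formally under base change.  Once the two hypotheses (i) $e_i(\mathbf{u})e_j(\mathbf{u}) \equiv e_j(\mathbf{u})e_i(\mathbf{u}) \pmod{\Jlamst{k}}$ and (ii) $f \in (\Jlamst{k})^\perp$ are in place, the theorem will give $\Delta(f) = \sum_\lambda s_\lambda(\mathbf{x})\,\langle \mathfrak{J}_\lambda(\mathbf{u}), f\rangle$.  Since $\Delta(f) = \mathcal{G}_{\bm{\beta}}(\mathbf{x};q)$ is immediate from the definition of $\Delta$ and Proposition \ref{p assafi}, comparing coefficients of $s_\lambda(\mathbf{x})$ will deliver both remaining claims.

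For (i), my first step is to verify the inclusion $\Jlamst{k} \supseteq \QQ(q)\tsr_\ZZ \Irkst$; commutativity of the $e_d(\mathbf{u})$ in $\U_q/\Jlamst{k}$ then follows from Lemma \ref{l es commute}.  The relation \eqref{e Irkst2} is shared with \eqref{e no repeat Lam}, so the content is showing that each generator $\e{b(ac-ca)} - \e{(ac-ca)b}$ with $a<b<c$ lies in $\Jlamst{k}$.  When $c-a > k$, relation \eqref{e far commute} forces $\e{ac} \equiv \e{ca}$ and both summands vanish; when $c-a \le k$, the differences $b-a$ and $c-b$ both lie in $(0,k)$, and applying \eqref{e q commute} twice inside each of $\e{bac}$ and $\e{bca}$ should give $\e{bac} \equiv \e{acb}$ and $\e{bca} \equiv \e{cab}$ modulo $\Jlamst{k}$, making the generator collapse to zero.

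For (ii), I would verify $\langle \e{u}\,r\,\e{v}, f\rangle = 0$ for each generator $r$ of $\Jlamst{k}$ and all words $\e{u},\e{v}$.  The key combinatorial input is that within each $\beta^{(i)}$ the no-$2\times 2$-square condition forces all cells to lie on distinct diagonals, so the shifted contents of $\bm{\beta}$ are globally distinct (distinct across components via the residue modulo $k$), making every word in the support of $f$ repetition-free; this handles generators $\e{w}$ with a repeated letter.  The same combinatorics shows that two cells $z, z'$ of a single $\beta^{(i)}$ with $z'$ immediately east or north of $z$ have shifted contents differing by exactly $k$, so the defining conditions of $\Wi{k}(\bm{\beta})$ are insensitive to swapping two letters whose difference is not equal to $k$.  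For $r = \e{ac} - \e{ca}$ with $c-a>k$ this gives both $\e{uacv} \in \Wi{k}(\bm{\beta}) \iff \e{ucav} \in \Wi{k}(\bm{\beta})$ and, since $|c-a|>k$ means the $a$--$c$ pair never contributes to $\invi{k}$, that $\invi{k}$ is preserved; for $r = \e{ab} - q^{-1}\e{ba}$ with $0<b-a<k$, membership is again preserved and a direct count shows $\invi{k}(\e{ubav}) = \invi{k}(\e{uabv}) + 1$, producing the cancellation $q^{\invi{k}(\e{uabv})} - q^{-1} q^{\invi{k}(\e{uabv})+1} = 0$.

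The main obstacle is step (ii): cleanly reducing the orthogonality check to these three local families of generators and keeping the inversion bookkeeping precise, in particular isolating the observation that $2\times 2$-free skew shapes have cells on distinct diagonals and that east/north adjacency yields a shifted-content gap of exactly $k$.  Once those combinatorial facts are pinned down, the rest is a mechanical appeal to Fomin-Greene.
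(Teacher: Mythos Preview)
Your proposal is correct and follows essentially the same route as the paper: verify $\Jlamst{k}\supseteq \QQ(q)\tsr_\ZZ\Irkst$ (your case split on $c-a$ versus $k$ matches the paper's sketch), invoke Lemma~\ref{l es commute} for commutativity of the $e_d(\mathbf{u})$, check $f\in(\Jlamst{k})^\perp$ generator by generator via the same membership-and-$\invi{k}$ bookkeeping, and conclude with the $\QQ(q)$-linear version of Theorem~\ref{t basics}. The paper is terser on the combinatorics (it simply asserts that membership in $\Wi{k}(\bm{\beta})$ and $\invi{k}$ behave as needed under the two swap types), while you spell out why the shifted contents are distinct and why only differences equal to $k$ constrain membership; this extra justification is welcome but not a different argument.
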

\begin{proof}
For the first statement, we must show that  $\langle \e{v (ca - ac) w}, f \rangle = 0$, for any $c-a > k$ (as in \eqref{e far commute}) and words $\e{v}$, $\e{w}$.
This is clear since  $\invi{k}(\e{vcaw}) = \invi{k}(\e{vacw})$, and $\e{vcaw} \in \Wi{k}(\bm{\beta})$ if and only if $\e{vacw} \in \Wi{k}(\bm{\beta})$.
We must also show that $\langle \e{v}(q\,\e{ab}-\e{ba}) \e{w}, f \rangle = 0$, for any $0< b-a < k$ (as in \eqref{e q commute}) and words $\e{v}$, $\e{w}$.
This holds because
\[\text{$\e{vabw} \in \Wi{k}(\bm{\beta})$ \ \ if and only if \ \ $\e{vbaw} \in \Wi{k}(\bm{\beta})$, and} \]
\[q^{-\invi{k}(\e{vbaw})}\big\langle  \e{v}(q\,\e{ab}-\e{ba}) \e{w}, f \big\rangle = \big\langle q^{-\invi{k}(\e{vabw})}\e{vabw}- q^{-\invi{k}(\e{vbaw})}\e{vbaw}, f \big\rangle = 0,\]
where the first equality is by $\invi{k}(\e{vabw}) = \invi{k}(\e{vbaw})-1$.

The second statement is immediate from the definition of  $\mathcal{G}_{\bm{\beta}}(\mathbf{x};q)$ (Proposition \ref{p assafi}) and  $\Delta$.

For the last statement, it is not hard to show that the relations \eqref{e far commute}--\eqref{e no repeat Lam} imply the relations \eqref{e Irkst1}--\eqref{e Irkst2}, hence $\U_q/\Jlamst{k}$ is a quotient of $\QQ(q) \tsr_\ZZ (\U/\Irkst)$ (see the proof of Proposition \ref{p lam and assaf}).  Hence the last statement follows from Lemma \ref{l es commute} and an easy modification of Theorem~\ref{t basics} to the coefficient field  $\QQ(q)$.
\end{proof}

Though working with the algebra  $\U_q/\Jlamst{k}$ over $\QQ(q)$ is particularly convenient for expressing $q$-Littlewood Richardson coefficients as in Proposition \ref{p llt formula},
to relate to Assaf's work it is convenient to have a version of  this algebra without the $q$.
Accordingly, define $\Ilamst{k} := \U\cap \Jlamst{k} \subseteq \U_q$, where  $\U \subseteq \U_q$ is the $\ZZ$-subalgebra of~\,$\U_q$ generated by 1 and the  $u_i$.
\begin{proposition}\label{p Ilamst equiv classes}
The nonzero equivalence classes of~\,$\U/\Ilamst{k}$ are
\[\{\e{v} \in \Wi{k}(\bm{\beta}) \mid \invi{k}(\e{v}) = t\} \]
for $\bm{\beta}$ as in \eqref{e beta condition} and $t \in \NN$ such that this set is nonempty.
\end{proposition}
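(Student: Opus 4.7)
The plan is to reduce the question to $\U_q/\Jlamst{k}$ and then track $q$-factors via $\invi{k}$, identifying the swap-equivalence classes with the sets $\Wi{k}(\bm\beta)$. Since $\Ilamst{k} = \U \cap \Jlamst{k}$, two words $\e{v},\e{w} \in \U$ are equivalent modulo $\Ilamst{k}$ exactly when $\e{v} - \e{w} \in \Jlamst{k}$, and a word $\e{v} \in \W$ is nonzero modulo $\Ilamst{k}$: pairing $\e{v}$ against the element $f = \sum_{\e{u} \in \Wi{k}(\bm\beta(\e{v}))} q^{\invi{k}(\e{u})} \e{u} \in (\Jlamst{k})^\perp$ from Proposition~\ref{p llt formula} (for an appropriate $\bm\beta(\e{v})$ constructed below) yields $q^{\invi{k}(\e{v})} \neq 0$, so $\e{v} \notin \Jlamst{k}$. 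The remaining work is to describe when $\e{v} - \e{w} \in \Jlamst{k}$.

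First I would track how the defining relations affect $\invi{k}$ and the $q$-coefficient. The far-commutation \eqref{e far commute} swaps two adjacent letters differing by more than $k$: no $q$-factor and no change in $\invi{k}$. The $q$-commutation \eqref{e q commute} swaps two adjacent letters $a,b$ with $0<b-a<k$: the multiplier is $q^{\pm 1}$ and $\invi{k}$ changes by $\mp 1$, and these line up so the multiplier always equals $q^{\invi{k}(\text{result})-\invi{k}(\text{start})}$. Composing such swaps, any two words $\e{v},\e{w} \in \W$ related by a chain of swaps of types \eqref{e far commute}, \eqref{e q commute} satisfy $\e{v} \equiv q^{\invi{k}(\e{w}) - \invi{k}(\e{v})}\e{w}$ in $\U_q/\Jlamst{k}$. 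Hence $\e{v} - \e{w} \in \Jlamst{k}$ iff $\e{v},\e{w}$ are swap-connected and $\invi{k}(\e{v}) = \invi{k}(\e{w})$.

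The combinatorial heart is then to identify swap-equivalence with membership in a common $\Wi{k}(\bm\beta)$. Given $\e{v}\in\W$, I construct $\bm\beta(\e{v})$ as follows: for each residue $i\pmod k$, partition the letters of $\e{v}$ with residue $i$ into maximal runs of letters differing successively by $k$; each run becomes one ribbon component of $\beta^{(i)}$, whose east/north steps are dictated by the relative order in $\e{v}$ of consecutive letters in the run. One checks immediately that $\e{v} \in \Wi{k}(\bm\beta(\e{v}))$ and that any swap of type \eqref{e far commute} or \eqref{e q commute} preserves $\bm\beta(\e{v})$, since such swaps move only pairs whose difference is not exactly $k$. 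The converse---that any two words in $\Wi{k}(\bm\beta)$ are related by such swaps---I would deduce from the classical fact that any two linear extensions of a finite poset are connected by adjacent transpositions of incomparable elements, applied to the poset on the cells of $\bm\beta$ generated by the east/north relations within each $\beta^{(i)}$.

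The main obstacle will be verifying that every incomparable pair in this cell poset has shifted-content difference not equal to $k$, so that the adjacent transpositions genuinely correspond to swaps of type \eqref{e far commute} or \eqref{e q commute}. This rests on a small geometric lemma: in any skew shape $\lambda/\mu$, two cells whose contents differ by $1$ lie in the same connected component, proved by exhibiting an explicit staircase of intermediate cells and checking each is in $\lambda/\mu$ using only the partition structure of $\lambda$ and $\mu$. Granted this lemma, distinct ribbons in a single $\beta^{(i)}$ have content gap at least $2$ (shifted gap $\ge 2k$), while cells in different $\beta^{(i)}, \beta^{(j)}$ have shifted-content difference not divisible by $k$; in either case the difference is never exactly $k$. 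Combining everything, the equivalence class of $\e{v}$ in $\U/\Ilamst{k}$ is precisely $\{\e{u}\in\Wi{k}(\bm\beta(\e{v})) : \invi{k}(\e{u}) = \invi{k}(\e{v})\}$, which is the stated form.
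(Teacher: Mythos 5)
Your argument is correct but takes a genuinely different route from the paper's. The paper treats the harder direction (that $\{\e{v} \in \Wi{k}(\bm\beta) : \invi{k}(\e{v})=t\}$ is a single equivalence class, not a union of several) by exhibiting a specific canonical representative: the word $\e{w}(\bm\beta)$ obtained by concatenating the antidiagonal reading words of the $\beta^{(i)}$, and then asserting that every element of $\Wi{k}(\bm\beta)$ can be brought to a power of $q$ times $\e{w}(\bm\beta)$ by applications of \eqref{e far commute} and \eqref{e q commute}. You instead model $\Wi{k}(\bm\beta)$ as the set of linear extensions of a poset on the cells (the transitive closure of the east/north covering relations within each $\beta^{(i)}$) and invoke the classical theorem that linear extensions of a finite poset are connected by adjacent transpositions of incomparable elements; the remaining work is then to check that every such transposition is realized by a legal swap, i.e., that no incomparable pair of cells has shifted-content difference exactly $k$. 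This buys you a conceptually cleaner connectivity argument at the cost of having to prove your ``small geometric lemma'' (two cells of a skew shape whose contents differ by $1$ lie in the same connected component). That lemma is true, and the staircase argument you sketch does work --- one reduces to the case $r_1 > r_2 + 1$ by observing that the cell $(r_2+1,c_2)$ lies in $\lambda/\mu$ using monotonicity of $\lambda$ and $\mu$, and then connects two cells on the same diagonal by a NW--SE staircase --- but this deserves to be written out since it is the crux of your approach and is not entirely obvious. Two smaller points: first, your ``Hence $\e{v}-\e{w}\in\Jlamst{k}$ iff swap-connected with equal $\invi{k}$'' needs the forward implication, which rests on the fact that $\Jlamst{k}$ is spanned over $\QQ(q)$ by the relation differences \eqref{e far commute}--\eqref{e no repeat Lam}, so that the quotient has a basis of canonical representatives; the paper relies on the same fact implicitly, so this is a matter of exposition rather than a gap. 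Second, your construction of $\bm\beta(\e{v})$ over-specifies the east/north structure of the ribbons: the set $\Wi{k}(\bm\beta)$ only sees the covering relations (both an east step and a north step impose the same ``$\tilde c(z)$ before $\tilde c(z')$'' constraint), so the ribbon shapes are not in fact determined by $\e{v}$. This is harmless since you only need \emph{some} $\bm\beta$ with $\e{v}\in\Wi{k}(\bm\beta)$, but the phrase ``dictated by the relative order in $\e{v}$'' should be removed or clarified.
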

\begin{proof}
Since applying a relation \eqref{e far commute} or \eqref{e q commute} to a word does not change the relative position of
two letters that differ by $k$, it follows that if  $\e{v} \equiv \e{w} \mod \Ilamst{k}$, then  $\invi{k}(\e{v}) = \invi{k}(\e{w})$, and $\e{v}$ and $\e{w}$ belong to the same $\Wi{k}(\bm{\beta})$ for some $\bm{\beta}$.
Hence each set  $\{\e{v} \in \Wi{k}(\bm{\beta}) \mid \invi{k}(\e{v}) = t\}$ is a union of nonzero equivalence classes. To see that this is actually a nonzero equivalence class, observe that any word  in $\Wi{k}(\bm{\beta})$ can be transformed into a power of  $q$ times the word $\e{w(\bm{\beta})}$ by a sequence of applications of \eqref{e far commute} and \eqref{e q commute}.
Here,
\[\e{w(\bm{\beta}) := w^0w^1\cdots w^{k-1},}\]
where $\e{w^i}$ is the word obtained by reading the shifted contents of $\beta^{(i)}$ along antidiagonals, starting from the northwesternmost and reading each antidiagonal from southwest to northeast.
For the $\bm{\beta}$ in Example \ref{ex LLT}, $\e{w^0 = 3}, \e{w^1 = 417}, \e{w^2 = 285},$ and $\e{w(\bm{\beta}) = 3417285}$.
Therefore if $\e{x}, \e{y} \in \{\e{v} \in \Wi{k}(\bm{\beta}) \mid \invi{k}(\e{v}) = t\}$, then
\[\e{x} \equiv q^{\invi{k}(\e{x}) - \invi{k}(\e{w(\bm{\beta})})} \e{w}(\bm{\beta}) = q^{\invi{k}(\e{y}) - \invi{k}(\e{w(\bm{\beta})})} \e{w}(\bm{\beta}) \equiv \e{y} \, \bmod{\, \Jlamst{k}}.\]
Hence  $\e{x} \equiv \e{y} \, \bmod{\, \Ilamst{k}}.$
\end{proof}

\subsection{Assaf's LLT bijectivizations}
\label{ss Assaf's LLT bijectivizations}
We now relate Lam's \cite{LamRibbon}  algebraic approach to studying LLT polynomials to Assaf's \cite{Sami} approach using D~graphs.
To do this, we first define the following  bijectivizations of~\,$\U/\Irkst$ and related D$_0$~graphs.
Let  $\U/\Iassafst{k}$ denote the quotient of~\,$\U$ by the following relations (let $\Iassafst{k}$ denote the corresponding two-sided ideal of~\,$\U$):
\begin{alignat}{3}
&\e{bac} = \e{bca} \qquad &&\text{for $c-a > k$ and $a<b<c$,} \label{e Iassaf1} \\
&\e{acb} = \e{cab} \qquad &&\text{for $c-a > k$ and $a<b<c$,} \label{e Iassaf2}\\
&\e{bac} = \e{acb} \qquad &&\text{for $c-a \leq k$ and $a<b<c$,} \label{e Iassaf3}\\
&\e{bca} = \e{cab} \qquad &&\text{for $c-a \leq k$ and $a<b<c$,} \label{e Iassaf4}\\
&\e{w} = 0 \qquad &&\text{for words $\e{w}$ with a repeated letter.} \label{e Iassaf5}
\end{alignat}
It is easy to see that these relations imply the relations \eqref{e Irkst1}--\eqref{e Irkst2}, hence $\U/\Iassafst{k}$ is a bijectivization of~\,$\U/\Irkst$.
These algebras generalize the plactic algebra since  $\U/\Iassafst{1} = \U/\Iplacst$ (see Example \ref{ex plac}).

\begin{definition} \label{d Assaf D graph}
Define $\G_k^\stand$ to be the D$_0$~graph on the vertex set  $\W$ such that
\[\text{the type of the KR square $\e{v \stars w}$ of  $\G_k^\stand$ is }
\begin{cases}
\text{Knuth} & \text{if } c-a >k, \\
\text{rotation} & \text{if } c-a \le k.
\end{cases}
\]
We define an \emph{Assaf LLT$_k$ D~graph} to be a union of connected components of $\G_k^\stand$.
\end{definition}
By construction,  the nonzero equivalence classes of~\,$\U/\Iassafst{k}$ (see \textsection\ref{ss bijectivizations})
are the components of $\G_k^\stand$.

\begin{proposition} \label{p lam and assaf}
The algebra  $\U/\Ilamst{k}$ is a quotient of~\,$\U/\Iassafst{k}$.
\end{proposition}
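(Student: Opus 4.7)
The plan is to prove the proposition by showing the containment $\Iassafst{k} \subseteq \Ilamst{k}$ of ideals in $\U$, which immediately yields a surjection $\U/\Iassafst{k} \twoheadrightarrow \U/\Ilamst{k}$. Since $\Ilamst{k} = \U \cap \Jlamst{k}$ and $\Iassafst{k}$ is by definition an ideal of $\U$, it suffices to verify that every generator of $\Iassafst{k}$ listed in \eqref{e Iassaf1}--\eqref{e Iassaf5} lies in $\Jlamst{k}$.

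First I would dispose of the easy cases. Relation \eqref{e Iassaf5} is identical to \eqref{e no repeat Lam}. For \eqref{e Iassaf1} and \eqref{e Iassaf2}, the hypothesis $c - a > k$ means that \eqref{e far commute} applies to the letters $\e{a}$ and $\e{c}$, so $\e{ac} \equiv \e{ca} \pmod{\Jlamst{k}}$, from which $\e{bac} \equiv \e{bca}$ and $\e{acb} \equiv \e{cab}$ follow by multiplying on the appropriate side by $\e{b}$.

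The main content is the rotation relations \eqref{e Iassaf3}--\eqref{e Iassaf4}, where the hypothesis $c - a \le k$ has to be handled carefully since the case $c - a = k$ is not directly covered by either \eqref{e far commute} or \eqref{e q commute}. The key observation is that because $a < b < c$ and $c - a \le k$, one automatically has $0 < b - a < k$ \emph{and} $0 < c - b < k$, so \eqref{e q commute} applies to the pairs $\{\e{a},\e{b}\}$ and $\{\e{b},\e{c}\}$ regardless of whether $c - a$ equals $k$ or is strictly smaller. I would then perform the two explicit $q$-commutation chains:
\begin{align*}
\e{bac} \,\equiv\, q\,\e{abc} \,\equiv\, q \cdot q^{-1} \e{acb} \,=\, \e{acb}, \qquad
\e{bca} \,\equiv\, q^{-1}\,\e{cba} \,\equiv\, q^{-1}\cdot q\,\e{cab} \,=\, \e{cab},
\end{align*}
both taken modulo $\Jlamst{k}$, where each congruence uses \eqref{e q commute} applied to an adjacent pair of letters differing by strictly less than $k$. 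This establishes \eqref{e Iassaf3} and \eqref{e Iassaf4} in $\U_q/\Jlamst{k}$.

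Having verified that every generator of $\Iassafst{k}$ lies in $\Jlamst{k}$, I would conclude $\Iassafst{k} \subseteq \Jlamst{k}$, and then combining with $\Iassafst{k} \subseteq \U$ gives $\Iassafst{k} \subseteq \U \cap \Jlamst{k} = \Ilamst{k}$, which is what is needed. There is no real obstacle here; the only point requiring a moment's care is ensuring the $q$-commutation relation \eqref{e q commute} is available for the two adjacent transpositions used in the rotation case even when $c - a$ attains the boundary value $k$, which it is because strict inequalities $b - a < k$ and $c - b < k$ follow automatically from $a < b < c$.
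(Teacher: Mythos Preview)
Your proposal is correct and follows essentially the same approach as the paper: verify that each generator of $\Iassafst{k}$ lies in $\Jlamst{k}$, handling \eqref{e Iassaf1}--\eqref{e Iassaf2} via \eqref{e far commute} and \eqref{e Iassaf3}--\eqref{e Iassaf4} via two applications of \eqref{e q commute}. Your treatment is slightly more detailed than the paper's (you spell out the $\e{bca}\to\e{cab}$ chain and explicitly note that $b-a<k$ and $c-b<k$ follow from $a<b<c$ with $c-a\le k$, which is the reason the boundary case $c-a=k$ causes no trouble), but the argument is the same.
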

\begin{proof}
Clearly, the relations \eqref{e Iassaf1} and \eqref{e Iassaf2} are implied by \eqref{e far commute}.
Next, we compute using \eqref{e q commute} twice,
\[\e{bac} \equiv q\e{abc} \equiv \e{acb} \ \bmod{\, \Jlamst{k}} \ \qquad \text{for $c-a \leq k$ and $a<b<c$.}\]
It follows that \eqref{e Iassaf3} holds in $\U/\Ilamst{k}$.
The relation \eqref{e Iassaf4} is similar.
\end{proof}

By the previous proposition, each nonzero equivalence class
$\{\e{v} \in \Wi{k}(\bm{\beta}) \mid \invi{k}(\e{v}) = t\}$  of~\,$\U/\Ilamst{k}$
is a union of nonzero equivalence classes of~\,$\U/\Iassafst{k}$.
This means that each set $\{\e{v} \in \Wi{k}(\bm{\beta}) \mid \invi{k}(\e{v}) = t\}$ with  $\bm{\beta}$ and  $t$ as in Proposition \ref{p Ilamst equiv classes}
is the vertex set of a union of connected components of $\G_k^\stand$;
define $\G_k^\stand(\bm{\beta}, t)$ to be this union of components.

We can now rephrase the connection between the graphs $\G_k^\stand$ and LLT polynomials from \cite[\textsection4.2]{Sami} in our language:
\begin{corollary}\label{c Sami LLT graphs}
Let $\bm{\beta}$ be as in \eqref{e beta condition}. There holds $\sum_{\e{v} \in \Wi{k}(\bm{\beta}),\, \invi{k}(\e{v}) = t} \e{v} \in (\Iassafst{k})^\perp$.
The expression for LLT polynomials from Proposition \ref{p llt formula} can be rewritten as
\[\mathcal{G}_{\bm{\beta}}(\mathbf{x};q)
= \sum_t q^t \, \Delta\Big( \sum_{\e{v} \in \Wi{k}(\bm{\beta}),\, \invi{k}(\e{v}) = t} \e{v} \Big) = \sum_t q^t\big( \text{gen. function of }\G_k^\stand(\bm{\beta}, t) \big). \]
\end{corollary}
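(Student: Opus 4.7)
The plan is to assemble the corollary from Propositions \ref{p assafi}, \ref{p lam and assaf}, and \ref{p Ilamst equiv classes}. Set $f_t := \sum_{\e{v} \in \Wi{k}(\bm{\beta}),\, \invi{k}(\e{v}) = t} \e{v}$. To prove the first claim I would argue that $f_t \in (\Ilamst{k})^\perp$ and then apply the inclusion $\Iassafst{k} \subseteq \Ilamst{k}$ (the dual of the surjection in Proposition \ref{p lam and assaf}) to conclude $f_t \in (\Iassafst{k})^\perp$. Proposition \ref{p Ilamst equiv classes} identifies the support of $f_t$ with the entirety of one nonzero equivalence class of $\U/\Ilamst{k}$; the general bijectivization principle of \textsection\ref{ss bijectivizations} then gives $f_t \in (\Ilamst{k})^\perp$ for free, provided $\Ilamst{k}$ is spanned as a $\ZZ$-module by monomials and equal-length binomials. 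This last property is immediate from Proposition \ref{p Ilamst equiv classes}: the monomials (words with a repeated letter) and binomials $\e{v}-\e{w}$ (pairs of words in the same nonzero class) already cut $\U$ down to a free $\ZZ$-module on the equivalence classes, and since $\U/\Ilamst{k}$ has the same $\ZZ$-basis by Proposition \ref{p Ilamst equiv classes}, the span of these monomials and binomials must equal $\Ilamst{k}$.

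Given the orthogonality, the LLT identity follows by a direct regrouping. Proposition \ref{p assafi} gives $\mathcal{G}_{\bm{\beta}}(\mathbf{x};q) = \sum_{\e{v}\in \Wi{k}(\bm{\beta})} q^{\invi{k}(\e{v})} Q_{\Des(\e{v})}(\mathbf{x})$, and collecting terms by the value of $\invi{k}$, together with $\Delta(\e{v}) = Q_{\Des(\e{v})}(\mathbf{x})$, yields $\sum_t q^t \Delta(f_t)$. Finally, by the definition of $\G_k^\stand(\bm{\beta}, t)$ given just before the corollary, its vertex set is exactly the support of $f_t$, so Definition \ref{d gen func} identifies $\Delta(f_t)$ with the generating function of $\G_k^\stand(\bm{\beta}, t)$, completing the rightmost equality.

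There is no substantial obstacle here; the corollary is essentially a translation of Proposition \ref{p Ilamst equiv classes} from the bijectivization $\U/\Ilamst{k}$ back to the D$_0$-graph setting. The one subtle step is recognizing that the explicit description of equivalence classes in Proposition \ref{p Ilamst equiv classes} is strong enough to pin down $\Ilamst{k}$ as the span of the corresponding monomials and binomials (rather than merely containing this span), since this is what legitimizes applying the bijectivization principle to deduce $(\Ilamst{k})^\perp$ has the claimed shape.
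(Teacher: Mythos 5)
Your proof is correct, and for the second part (regrouping Proposition \ref{p assafi} by $\invi{k}$ and matching supports to vertex sets) it coincides with what the paper implicitly does. For the first assertion, however, you take a mildly different route than the paper. You go through the intermediate $(\Ilamst{k})^\perp$: show $f_t \in (\Ilamst{k})^\perp$ by treating $\U/\Ilamst{k}$ as a bijectivization, then descend via $\Iassafst{k} \subseteq \Ilamst{k}$. The paper instead reasons at the level of equivalence classes: $\Iassafst{k} \subseteq \Ilamst{k}$ means $\Iassafst{k}$-equivalence refines $\Ilamst{k}$-equivalence, so the set $\{\e{v} \in \Wi{k}(\bm{\beta}) \mid \invi{k}(\e{v}) = t\}$ is a union of nonzero $\U/\Iassafst{k}$-classes (which are exactly the components of $\G_k^\stand$), and the orthogonality $f_t \in (\Iassafst{k})^\perp$ then falls out directly from the $\ZZ$-basis of $(\Iassafst{k})^\perp$. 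The paper's route is cleaner in one respect: it only requires $\U/\Iassafst{k}$ to be a bijectivization, which is manifest from the definition via relations \eqref{e Iassaf1}--\eqref{e Iassaf5}, while your route needs $\U/\Ilamst{k}$ to be a bijectivization. You correctly flag and address this, but your claim that it is ``immediate from Proposition \ref{p Ilamst equiv classes}'' is slightly overstated: that proposition describes the equivalence classes, but you additionally need $\U/\Ilamst{k}$ to be $\ZZ$-free with basis indexed by those classes to run the surjection-of-free-modules argument, and this requires a small extra step (e.g. observing that $\U/\Ilamst{k}$ embeds in the $\QQ(q)$-vector space $\U_q/\Jlamst{k}$, hence is torsion-free). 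That said, the paper does treat $\U/\Ilamst{k}$ as a bijectivization in \textsection\ref{ss linear program lltlp}, so your reliance on this fact is consistent with the paper's conventions; it is just not the most economical path to the corollary.
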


For each  $k$-tuple $\bm{\beta}$ of skew shapes with contents satisfying \eqref{e beta condition},
the graph $\bigsqcup_t \G_k^\stand(\bm{\beta},t)$ is isomorphic to
the graph $\G^{(k)}_{c,D}$ defined in \cite[\textsection4.2]{Sami} as a signed, colored graph, where $c,D$ are the content vector and $k$-descent set corresponding to $\bm{\beta}$ as in \cite[Equation 4.2]{Sami}
(see also  \cite[Proposition 2.6]{BLamLLT}).

It is clear that the  $\G_k^\stand$ satisfy axiom 5, and it is shown in  \cite{Sami} that the graphs  $\G^{(k)}_{c,D}$ and hence the  $\G_k^\stand$ satisfy axiom $4'b$.
\begin{theorem}[{\cite[Theorem 5.38]{Sami}}]
The  D$_0$~graphs $\G_k^\stand$ are D~graphs.  Hence, as their name suggests,  the Assaf LLT$_k$ D~graphs are D~graphs.
\end{theorem}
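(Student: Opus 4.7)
The plan is to invoke Theorem \ref{t D0 satisfy 123etc}, which reduces the problem to verifying axioms~5 and~$4'b$ for the D$_0$ graph $\G_k^\stand$, since $\G_k^\stand$ is a D$_0$ graph by construction.

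For axiom~5, I would argue directly from Definition \ref{d Assaf D graph}. Suppose $\{\e{w},\e{x}\}$ is an $i$-edge and $\{\e{x},\e{y}\}$ is a $j$-edge of $\G_k^\stand$ with $|i-j|\ge 3$. The $i$-edge corresponds to a Knuth or rotation transformation at positions $\{i-1,i,i+1\}$, and the $j$-edge to one at positions $\{j-1,j,j+1\}$; since $|i-j|\ge 3$, these two windows are disjoint, so the two transformations act on non-overlapping triples of letters in $\e{x}$. Applying the $j$-transformation to $\e{w}$ therefore produces a word $\e{v}\in\W$, and the $i$-transformation carries $\e{v}$ to $\e{y}$. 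Moreover, because the type (Knuth vs.\ rotation) of the transformation depends only on whether the three involved letters $a<b<c$ satisfy $c-a>k$ or $c-a\le k$, the types are preserved under swapping the order in which the transformations are applied. Hence $\{\e{w},\e{v}\}$ is a $j$-edge and $\{\e{v},\e{y}\}$ is an $i$-edge of $\G_k^\stand$, verifying axiom~5.

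For axiom~$4'b$, I would appeal to the connection to Assaf's graphs $\G^{(k)}_{c,D}$ described just before the theorem statement. Every connected component of $\G_k^\stand$ lies inside some $\G_k^\stand(\bm{\beta},t)$, and by the discussion after Corollary \ref{c Sami LLT graphs}, $\bigsqcup_t \G_k^\stand(\bm{\beta},t)$ is isomorphic (as a signed, colored graph) to the graph $\G^{(k)}_{c,D}$ of \cite{Sami}, for the appropriate content vector $c$ and $k$-descent set $D$. Axiom~$4'b$ is a condition purely about the underlying signed, colored graph (flat $i$-chains, $(i+1)$-type W vertices, etc.), so it is preserved under such an isomorphism. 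Since \cite{Sami} establishes axiom~$4'b$ for every $\G^{(k)}_{c,D}$, we conclude that every component of $\G_k^\stand$ satisfies axiom~$4'b$, hence so does $\G_k^\stand$ itself.

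The main (in fact only) obstacle is axiom~$4'b$, whose proof we outsource to \cite{Sami}; the rest is a matter of carefully checking that the isomorphism with $\G^{(k)}_{c,D}$ identifies the notions (signature, color, edges) used in the statement of axiom~$4'b$. With axioms~5 and~$4'b$ in hand, Theorem \ref{t D0 satisfy 123etc} immediately gives that $\G_k^\stand$ is a D graph, and since any Assaf LLT$_k$ D graph is by definition a union of connected components of $\G_k^\stand$, the final assertion follows from Proposition \ref{p component KR set}(a) together with the fact that axioms~1--3, $4'a$, $4'b$, 5, $\LSP_4$ and $\LSP_5$ are all inherited by unions of components.
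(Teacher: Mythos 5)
Your proposal is correct and follows essentially the same route as the paper's brief justification: the paper dispatches axiom~5 with ``it is clear that the $\G_k^\stand$ satisfy axiom~5,'' cites \cite{Sami} for axiom~$4'b$ via the isomorphism with $\G^{(k)}_{c,D}$, and then appeals to Theorem~\ref{t D0 satisfy 123etc}. You have merely spelled out the disjoint-window argument for axiom~5 and made the inheritance-by-components step explicit, both of which the paper leaves implicit.
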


\begin{example}
Figure \ref{f permutation D graph 32+311} depicts the graph $\bigsqcup_t \G_3^\stand(\bm{\beta},t)$ for
$\bm{\beta} = \left(\partition{&~\\&},\partition{&~\\&~},\partition{~&~\\&}\right) = (2/1,22/11,2)$.
Hence by Corollary \ref{c Sami LLT graphs}, 
\[\mathcal{G}_{\bm{\beta}}(\mathbf{x};q)=
q^2 s_{41} + q^3 (s_{32}+s_{311}) +  q^4 (s_{311}+s_{221}) + q^5 s_{2111}.
\]
\end{example}

\begin{figure}
\centerfloat
\begin{tikzpicture}[xscale = 2,yscale = 1]
\tikzstyle{vertex}=[inner sep=0pt, outer sep=4pt]
\tikzstyle{framedvertex}=[inner sep=3pt, outer sep=4pt, draw=gray]
\tikzstyle{aedge} = [draw, thin, ->,black]
\tikzstyle{edge} = [draw, thick, -,black]
\tikzstyle{doubleedge} = [draw, thick, double distance=1pt, -,black]
\tikzstyle{hiddenedge} = [draw=none, thick, double distance=1pt, -,black]
\tikzstyle{dashededge} = [draw, very thick, dashed, black]
\tikzstyle{LabelStyleH} = [text=black, anchor=south]
\tikzstyle{LabelStyleHn} = [text=black, anchor=north]
\tikzstyle{LabelStyleV} = [text=black, anchor=east]

\begin{scope} [xshift = 2.6cm]
\node[vertex] (v1) at (1,1){\footnotesize$\e{23451}$};
\node[vertex] (v2) at (2,1){\footnotesize$\e{23415}$};
\node[vertex] (v3) at (3,1){\footnotesize$\e{24135}$};
\node[vertex] (v4) at (4,1){\footnotesize$\e{41235}$};
\node[vertex] (inv2) at (6,1){\small $\invi{3} = 2$};
\end{scope}

\draw[edge] (v1) to node[LabelStyleH]{\Tiny$4 $} (v2);
\draw[edge] (v2) to node[LabelStyleH]{\Tiny$\tilde{3} $} (v3);
\draw[edge] (v3) to node[LabelStyleH]{\Tiny$\tilde{2} $} (v4);

\begin{scope} [yshift = -3.5cm]
\node[vertex] (v1) at (9,2){\footnotesize$\e{23541}$};
\node[vertex] (v2) at (8,2){\footnotesize$\e{24351}$};
\node[vertex] (v3) at (7,3){\footnotesize$\e{32451}$};
\node[vertex] (v4) at (1,2){\footnotesize$\e{24513}$};
\node[vertex] (v5) at (2,2){\footnotesize$\e{24153}$};
\node[vertex] (v6) at (3,2){\footnotesize$\e{41253}$};
\node[vertex] (v7) at (7,1){\footnotesize$\e{24315}$};
\node[vertex] (v8) at (6,2){\footnotesize$\e{32415}$};
\node[vertex] (v9) at (5,2){\footnotesize$\e{34125}$};
\node[vertex] (v10) at (4,2){\footnotesize$\e{41325}$};
\node[vertex] (v11) at (4,3){\footnotesize$\e{42135}$};
\node[vertex] (inv3) at (8.6,3){\small $\invi{3} = 3$};
\end{scope}
\draw[edge] (v1) to node[LabelStyleH]{\Tiny$\tilde{3} $} (v2);
\draw[edge] (v2) to node[LabelStyleH]{\Tiny$\tilde{2} $} (v3);
\draw[edge] (v2) to node[LabelStyleH]{\Tiny$4 $} (v7);
\draw[edge] (v3) to node[LabelStyleH]{\Tiny$4 $} (v8);
\draw[doubleedge] (v4) to node[LabelStyleH]{\Tiny$3$} (v5);
\draw[hiddenedge] (v4) to node[LabelStyleHn]{\Tiny$4$} (v5);
\draw[edge] (v5) to node[LabelStyleH]{\Tiny$\tilde{2} $} (v6);
\draw[edge] (v6) to node[LabelStyleH]{\Tiny$\tilde{4} $} (v10);
\draw[edge] (v7) to node[LabelStyleH]{\Tiny$\tilde{2} $} (v8);
\draw[edge] (v8) to node[LabelStyleH]{\Tiny$\tilde{3} $} (v9);
\draw[edge] (v9) to node[LabelStyleH]{\Tiny$\tilde{2} $} (v10);
\draw[edge] (v10) to node[LabelStyleV]{\Tiny$\tilde{3} $} (v11);

\begin{scope}[yshift = -7cm]
\node[vertex] (v1) at (4,3){\footnotesize$\e{24531}$};
\node[vertex] (v2) at (4,2){\footnotesize$\e{25341}$};
\node[vertex] (v3) at (5,2){\footnotesize$\e{32541}$};
\node[vertex] (v4) at (6,2){\footnotesize$\e{34251}$};
\node[vertex] (v5) at (7,1){\footnotesize$\e{42351}$};
\node[vertex] (v6) at (3,2){\footnotesize$\e{25413}$};
\node[vertex] (v7) at (2,2){\footnotesize$\e{42513}$};
\node[vertex] (v8) at (1,2){\footnotesize$\e{42153}$};
\node[vertex] (v9) at (7,3){\footnotesize$\e{34215}$};
\node[vertex] (v10) at (8,2){\footnotesize$\e{42315}$};
\node[vertex] (v11) at (9,2){\footnotesize$\e{43125}$};
\node[vertex] (inv4) at (8.6,3){\small $\invi{3} = 4$};
\end{scope}
\draw[edge] (v1) to node[LabelStyleV]{\Tiny$\tilde{3} $} (v2);
\draw[edge] (v2) to node[LabelStyleH]{\Tiny$\tilde{2} $} (v3);
\draw[edge] (v2) to node[LabelStyleH]{\Tiny$\tilde{4} $} (v6);
\draw[edge] (v3) to node[LabelStyleH]{\Tiny$\tilde{3} $} (v4);
\draw[edge] (v4) to node[LabelStyleH]{\Tiny$4 $} (v9);
\draw[edge] (v4) to node[LabelStyleH]{\Tiny$\tilde{2} $} (v5);
\draw[edge] (v5) to node[LabelStyleH]{\Tiny$4 $} (v10);
\draw[edge] (v6) to node[LabelStyleH]{\Tiny$\tilde{2} $} (v7);
\draw[doubleedge] (v7) to node[LabelStyleH]{\Tiny$3$} (v8);
\draw[hiddenedge] (v7) to node[LabelStyleHn]{\Tiny$4$} (v8);
\draw[edge] (v9) to node[LabelStyleH]{\Tiny$\tilde{2} $} (v10);
\draw[edge] (v10) to node[LabelStyleH]{\Tiny$\tilde{3} $} (v11);

\begin{scope} [yshift = -8.5cm, xshift = 2.6cm]
\node[vertex] (v1) at (1,1){\footnotesize$\e{25431}$};
\node[vertex] (v2) at (2,1){\footnotesize$\e{42531}$};
\node[vertex] (v3) at (3,1){\footnotesize$\e{43251}$};
\node[vertex] (v4) at (4,1){\footnotesize$\e{43215}$};
\node[vertex] (inv4) at (6,1){\small $\invi{3} = 5$};
\end{scope}
\draw[edge] (v1) to node[LabelStyleH]{\Tiny$\tilde{2} $} (v2);
\draw[edge] (v2) to node[LabelStyleH]{\Tiny$\tilde{3} $} (v3);
\draw[edge] (v3) to node[LabelStyleH]{\Tiny$4 $} (v4);

\end{tikzpicture}
\caption{\label{f permutation D graph 32+311} The Assaf LLT$_3$ D~graph $\bigsqcup_t \G_3^\stand(\bm{\beta},t)$ for
$\bm{\beta} = (2/1,22/11,2)$.
The components have generating functions (from top to bottom)  $s_{41}$, $s_{32}+s_{311}$, $s_{311}+s_{221}$, $s_{2111}$.
}
\end{figure}

\section{Linear programming and Schur positivity}
\label{s Linear programming and Schur positivity}
Here we state and prove the full version of Theorem~\ref{t intro linear program 2} from the introduction, which
relates the monomial positivity of $\mathfrak{J}_\lambda(\mathbf{u})$ in a quotient of~\,$\U/\Irkst$
to the Schur positivity of generating functions of certain D$_0$~graphs.
Its proof uses linear programming  duality.

\subsection{Background on linear programming}
\label{ss Background on linear programming}
We review some terminology related to linear programs.
A good reference for this background is \cite[Chapter 7]{Schrijver}.

Let  $A$ be  a $t \times m$ real matrix and $\mathbf{b} \in \RR^t$, $\mathbf{c} \in \RR^m$ be column vectors.
The linear program $\lp$ associated to this data is the optimization problem
minimize $\mathbf{c}^T \mathbf{x}$ over all $\mathbf{x}\in \RR^m$ satisfying the linear constraints
\begin{align*}
A\mathbf{x} &\ge \mathbf{b}, \\
\mathbf{x} &\ge \mathbf{0}.
\end{align*}
The linear program dual to  $\lp$, denoted  $\lp^\vee$, is to maximize $\mathbf{b}^T \mathbf{y}$ over all $\mathbf{y}\in \RR^t$ satisfying the linear constraints
\begin{align*}
A^T \mathbf{y} & \le \mathbf{c},\\
\mathbf{y} & \ge \mathbf{0}.
\end{align*}

A \emph{solution} of a linear program is a vector satisfying the stated constraints
(e.g.  $\mathbf{x}\in \RR^m$ is a solution of  $\lp$ if and only if $A\mathbf{x} \ge \mathbf{b}, \ \mathbf{x}\ge \mathbf{0}$).
The \emph{feasible region} is the set of all solutions.
A linear program is \emph{feasible} if  the feasible region is nonempty and $\emph{infeasible}$ otherwise.
A linear program whose objective is to minimize $\mathbf{c}^T\mathbf{x}$ is \emph{bounded} if this quantity is bounded below as $\mathbf{x}$ ranges over the feasible region. Similarly, a linear program whose objective is to maximize $\mathbf{b}^T \mathbf{y}$ is \emph{bounded} if this quantity is bounded above as $\mathbf{y}$ ranges over the feasible region.
\begin{theorem}[Duality Theorem]
Exactly one of the following is true for the linear program  $\lp$ and its dual  $\lp^\vee$:
\begin{itemize}
\item  $\lp$ and  $\lp^\vee$ are feasible and bounded and
\[\min\{\mathbf{c}^T \mathbf{x} \mid A\mathbf{x} \ge \mathbf{b}, \ \mathbf{x}\ge \mathbf{0}\} = \max \{\mathbf{b}^T \mathbf{y} \mid A^T \mathbf{y} \le \mathbf{c}, \ \mathbf{y} \ge \mathbf{0}\}.\]
\item $\lp$ is  unbounded and $\lp^\vee$ is  infeasible.
\item $\lp^\vee$ is  unbounded and $\lp$ is  infeasible.
\item  $\lp$ and $\lp^\vee$ are infeasible.
\end{itemize}
\end{theorem}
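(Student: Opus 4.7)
The statement to be established is the classical LP Duality Theorem, so the plan is to follow the standard textbook approach (as in \cite[Chapter 7]{Schrijver}), proving weak duality directly and then deducing strong duality from Farkas' lemma.

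\textbf{Step 1 (Weak duality).} I would first verify that if $\mathbf{x}$ is feasible for $\lp$ and $\mathbf{y}$ is feasible for $\lp^\vee$, then
\[
\mathbf{c}^T\mathbf{x} \;\geq\; (A^T\mathbf{y})^T\mathbf{x} \;=\; \mathbf{y}^T(A\mathbf{x}) \;\geq\; \mathbf{y}^T \mathbf{b} \;=\; \mathbf{b}^T\mathbf{y},
\]
where the two inequalities use $\mathbf{x}\geq \mathbf{0}$, $\mathbf{y}\geq\mathbf{0}$, $A\mathbf{x}\geq\mathbf{b}$ and $A^T\mathbf{y}\leq\mathbf{c}$. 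This inequality has three immediate consequences that rule out all combinations of outcomes except the four listed: if both programs are feasible then each optimum bounds the other (so both are bounded); if $\lp$ is unbounded below, no feasible $\mathbf{y}$ can exist, so $\lp^\vee$ is infeasible; and symmetrically if $\lp^\vee$ is unbounded above. Together with the obvious observation that the cases are pairwise mutually exclusive (the first requires feasibility of both, while the remaining three require infeasibility of at least one), this leaves only the question of showing that whenever both are feasible, the two optima actually coincide.

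\textbf{Step 2 (Strong duality via Farkas).} The plan for strong duality is to use Farkas' lemma in the form: for a matrix $M$ and vector $\mathbf{d}$, exactly one of (a) $\exists\, \mathbf{z}\geq \mathbf{0}$ with $M\mathbf{z}=\mathbf{d}$, or (b) $\exists\, \mathbf{w}$ with $M^T\mathbf{w}\leq\mathbf{0}$ and $\mathbf{d}^T\mathbf{w}>0$ holds. Assume $\lp$ is feasible and bounded with optimum $\gamma^*$; by weak duality $\gamma^*\geq \sup\{\mathbf{b}^T\mathbf{y}\mid A^T\mathbf{y}\leq\mathbf{c},\ \mathbf{y}\geq\mathbf{0}\}=:\delta^*$. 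To prove $\gamma^*=\delta^*$, I would show that for every $\epsilon>0$ there is a dual feasible $\mathbf{y}$ with $\mathbf{b}^T\mathbf{y}\geq \gamma^*-\epsilon$. This is done by applying Farkas' lemma to the system expressing ``$A\mathbf{x}\geq\mathbf{b},\ \mathbf{x}\geq\mathbf{0},\ \mathbf{c}^T\mathbf{x}\leq \gamma^*-\epsilon$''. Infeasibility of this system (which holds by definition of $\gamma^*$) produces exactly the dual certificate needed.

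\textbf{Step 3 (Farkas itself).} The underlying ingredient is Farkas' lemma, which I would prove by the separating hyperplane theorem: the set $\{M\mathbf{z}\mid \mathbf{z}\geq \mathbf{0}\}$ is a closed convex cone in $\RR^m$ (closedness follows by expressing it as a finite union of finitely-generated cones, or by a direct Carath\'eodory-style argument), so if $\mathbf{d}$ does not lie in it, there is a hyperplane strictly separating $\mathbf{d}$ from the cone, and its normal supplies the vector $\mathbf{w}$ in alternative (b). The main technical obstacle is the closedness of this finitely generated cone; once that is in hand, everything else is routine. Finally, to confirm the disjunction is exhaustive, I would note with an explicit example (e.g.\ $A=\begin{pmatrix}1\\-1\end{pmatrix}$, $\mathbf{b}=\begin{pmatrix}1\\1\end{pmatrix}$, $\mathbf{c}=(1)$) that the fourth case (both infeasible) actually occurs, so none of the four possibilities is vacuous.
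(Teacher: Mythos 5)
The paper does not prove the Duality Theorem at all; it is stated as classical background with a pointer to Schrijver, Chapter 7, so any comparison is really against the standard textbook treatment rather than an argument in the paper. Your outline follows that standard route (weak duality plus Farkas, with Farkas reduced to closedness of a finitely generated cone and a separating hyperplane), and the overall architecture is sound. Two things deserve comment. First, the $\epsilon$-approximation in Step 2 only yields $\sup\{\mathbf{b}^T\mathbf{y}\}\geq\gamma^*$; since the theorem asserts a genuine $\max$, you still need to argue that the dual optimum is attained. The cleaner Farkas argument applies the lemma once to the system $A\mathbf{x}\geq\mathbf{b},\ \mathbf{x}\geq\mathbf{0},\ -\mathbf{c}^T\mathbf{x}\geq-\gamma^*+\epsilon$ and then observes that, because LP optima are achieved at a vertex of a rational polyhedron (or by a closedness argument analogous to the one you already invoke for cones), the limiting dual certificate can be taken to be an actual feasible point; either way this deserves a sentence. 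Second, your example at the end is wrong: with $A=\begin{pmatrix}1\\-1\end{pmatrix}$, $\mathbf{b}=(1,1)^T$, $\mathbf{c}=(1)$, the primal is infeasible as you say, but the dual asks to maximize $y_1+y_2$ subject to $y_1-y_2\leq 1$, $y\geq 0$, which is feasible (take $\mathbf{y}=\mathbf{0}$) and unbounded (take $\mathbf{y}=(0,t)$ with $t\to\infty$). That exhibits the third case, not the fourth. A correct instance of ``both infeasible'' is $A=\begin{pmatrix}1&-1\\-1&1\end{pmatrix}$, $\mathbf{b}=(1,1)^T$, $\mathbf{c}=(-1,-1)^T$, where the primal constraints force $x_1-x_2\geq 1$ and $x_1-x_2\leq -1$, and the dual constraints force $y_1-y_2\leq -1$ and $y_1-y_2\geq 1$.
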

If  the first bullet point holds above, then  $\min\{\mathbf{c}^T \mathbf{x} \mid A\mathbf{x} \ge \mathbf{b}, \ \mathbf{x}\ge \mathbf{0}\}$ is the \emph{optimal value}
of  $\lp$; any solution $\mathbf{x}^*$ of  $\lp$ such that  $\mathbf{c}^T \mathbf{x}^*$ equals the optimal value
is an \emph{optimal solution}.
Similarly, \emph{an optimal integer solution} of  $\lp$ is an integer vector $\mathbf{x}^*$ such that $\mathbf{c}^T\mathbf{x}^*$ is the minimum of $\mathbf{c}^T \mathbf{x}$ over all integer vectors $\mathbf{x}$ in the feasible region; this minimum value is the \emph{optimal integer value} of  $\lp$. We use similar terminology for the dual.

\subsection{Linear programs for monomial positivity of  $\mathfrak{J}_\lambda(\mathbf{u})$}
Below we define three linear programs $\lp_{\cap}$,  $\lp_{\cap,=}$, and  $\lp_\cap^\vee$,
and then in Theorem~\ref{t linear program 2} we relate them to the problem of expressing  $\mathfrak{J}_\lambda(\mathbf{u})$
as a positive sum of monomials in a quotient of~\,$\U/\Irkst$.  We must first fix some notation.

Throughout this subsection, fix a positive integer  $d$ and a partition $\lambda$ of  $d$.
Let  $\U_d$ denote the degree  $d$ homogeneous component of~\,$\U$, and for any  homogeneous ideal $J$ of~\,$\U$, let  $J_d$
denote the degree  $d$ homogeneous component of  $J$.
Let $\e{w^1,\ldots,w^m}$ be the words of~\,$\U$ of length  $d$.
We work in the vector space $\RR\U_d := \RR \tsr_\ZZ \U_d \cong \RR^m$ and identify any vector
$\mathbf{x} \in \RR^m$ with the element $\sum_{i} x_i\e{w^i} \in \RR\U_d$.
In  particular, the element $\mathfrak{J}_\lambda(\mathbf{u}) \in \U_d$
is  identified with a vector in  $\RR^m$.

Let $J^{(1)}, \ldots, J^{(l)}$ be ideals of~\,$\U$ such that for each  $j \in [l]$
\begin{itemize}
\item the quotient $\U/J^{(j)}$ is a bijectivization of~\,$\U/\Irkst$,
\item the union of the nonzero equivalence classes of~\,$\U/J^{(j)}$ of degree  $d$ is  $\W_d$.
\end{itemize}
For each $j \in [l]$, let $h_1^{(j)},\ldots, h_{t_j}^{(j)}$ be the  $\ZZ$-basis of $(J^{(j)}_d)^\perp$ described in \textsection\ref{ss bijectivizations};
each $h_i^{(j)}$ is a zero-one vector (in the basis  $\e{w^1,\dots,w^m}$) whose support is a nonzero  equivalence class of~\,$\U/J^{(j)}$.

We next define matrices and vectors for the linear programs.
Let  $A$ be the $(\sum t_j) \times m$ matrix given by
\[A = \mat{A_1 \\ A_2 \\ \vdots \\ A_l}, \quad \text{where} \
A_j = \mat{\rule[2.2pt]{1em}{0.4pt} \! \!  & h_1^{(j)} & \! \! \rule[2.2pt]{1em}{0.4pt}\\
\, & \vdots &  \\
\rule[2.2pt]{1em}{0.4pt} \! \!  &  h_{t_j}^{(j)} &  \! \! \rule[2.2pt]{1em}{0.4pt} }.
\]
Define the column vector  $\mathbf{b} \in \RR^{\sum t_j}$ by
\begin{align}\label{e b def}
\mathbf{b} = \mat{\mathbf{b}^1 \\ \mathbf{b}^2 \\ \vdots \\ \mathbf{b}^l}, \quad \text{where} \ \ \mathbf{b}^j = A_j \mathfrak{J}_\lambda(\mathbf{u}) = \mat{\langle \mathfrak{J}_\lambda(\mathbf{u}), h_1^{(j)}\rangle \\ \vdots \\ \langle \mathfrak{J}_\lambda(\mathbf{u}), h_{t_j}^{(j)} \rangle} \in \RR^{t_j}.
\end{align}

Let  $\mathbf{c} \in \RR^m$ be the zero-one vector whose support is $\W_d$, i.e.
\[c_i =
\begin{cases}
1 & \text{ if } \e{w^i} \in \W_d, \\
0 & \text{ if } \e{w^i} \notin \W_d.
\end{cases}\]
The linear program $\lp_\cap$ is to minimize $\mathbf{c}^T \mathbf{x}$ over all $\mathbf{x} \in \RR^m$ satisfying
\begin{align*}
A\mathbf{x} &\ge \mathbf{b}, \\
\mathbf{x} &\ge \mathbf{0}.
\end{align*}

Define the linear program  $\lp_{\cap, =}$ to be the same as  $\lp_\cap$ except with the constraint  $A\mathbf{x} \ge \mathbf{b}$ replaced by $ A\mathbf{x} = \mathbf{b}$.

The linear program dual to $\lp_{\cap}$, denoted $\lp_{\cap}^\vee$, is to maximize $\mathbf{b}^T \mathbf{y}$ over all $\mathbf{y} \in \RR^{\sum t_j}$ satisfying
\begin{align*}
A^T \mathbf{y} & \le \mathbf{c},\\
\mathbf{y} & \ge \mathbf{0}.
\end{align*}


\begin{theorem} \label{t linear program 2}
Maintain the notation above.  In particular, we are assuming that  $d$ and  $\lambda$ have been fixed and
for each  $j \in [l]$,
$\U/J^{(j)}$ is a bijectivization of~\,$\U/\Irkst$.
Let $J = \bigcap_j J^{(j)}$.
Recall that  $\e{w^1,\dots, w^m}$ are the words of~\,$\U_d$.
Let  $\RR_{\ge 0}^m$ denote the polyhedron $\{\sum_{i} x_i \e{w^i} \mid x_i \ge 0\} \subseteq \RR\U_d$.
Set $M = \mathbf{c}^T \mathfrak{J}_\lambda(\mathbf{u})$.
The following are equivalent:
\begin{list}{\emph{(\roman{ctr})}}{\usecounter{ctr} \setlength{\itemsep}{2pt} \setlength{\topsep}{3pt}}
\item $\mathfrak{J}_\lambda(\mathbf{u})$ lies in the image of  $\RR_{\ge 0}^m$ in  $\RR \U_d/\RR J_d$.
\item The linear program $\lp_{\cap, =}$ is feasible.
\item The linear program $\lp_\cap$ has optimal value $M$.
\item The linear program $\lp_\cap^\vee$ has optimal value $M$.
\item For every $\mathbf{g} \in \sum_j \big((\RR J^{(j)}_d)^\perp \cap \RR_{\ge 0}^m \big)$ such that  $\mathbf{g} \le \mathbf{c}$,
$\langle \mathfrak{J}_\lambda(\mathbf{u}), \mathbf{c}-\mathbf{g} \rangle \ge 0$.
\end{list}
\end{theorem}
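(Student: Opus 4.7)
The plan is to establish the cycle $(\text{i})\Leftrightarrow(\text{ii})\Leftrightarrow(\text{iii})\Leftrightarrow(\text{iv})\Leftrightarrow(\text{v})$. Two preliminary facts carry most of the load. First, $\ker A = \RR J_d$: the rows of $A_j$ form the basis $h_1^{(j)},\dots,h_{t_j}^{(j)}$ of $(J_d^{(j)})^\perp$, so the row span of $A$ equals $\sum_j (\RR J_d^{(j)})^\perp = \bigl(\bigcap_j \RR J_d^{(j)}\bigr)^\perp = (\RR J_d)^\perp$. Second, because the nonzero equivalence classes of $\U/J^{(j)}$ in degree $d$ partition $\W_d$ by hypothesis, the characteristic vector $\mathbf{c}$ of $\W_d$ satisfies $\mathbf{c} = A_j^T \mathbf{1}_{t_j}$ for every $j$. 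Moreover, since the $h_i^{(j)}$ (for fixed $j$) have pairwise disjoint supports and are $0/1$-vectors, any nonnegative vector in $(\RR J_d^{(j)})^\perp$ is forced to be a nonnegative combination of them, giving $\sum_j \bigl((\RR J_d^{(j)})^\perp \cap \RR_{\ge 0}^m\bigr) = \{A^T \mathbf{y} : \mathbf{y} \ge \mathbf{0}\}$.

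Given these, $(\text{i})\Leftrightarrow(\text{ii})$ is immediate: both assert the existence of $\mathbf{x}\ge \mathbf{0}$ with $\mathfrak{J}_\lambda(\mathbf{u})-\mathbf{x} \in \ker A = \RR J_d$, equivalently $A\mathbf{x} = A\mathfrak{J}_\lambda(\mathbf{u}) = \mathbf{b}$. Next I check that $\lp_\cap$ is feasible (any sufficiently large scalar multiple of $\mathbf{c}$ works, since each $A_j \mathbf{c}$ is the strictly positive vector of equivalence class sizes) and bounded below by $0$, so LP duality yields $\text{opt}(\lp_\cap) = \text{opt}(\lp_\cap^\vee)$. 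The vector $\mathbf{y}^* := (\mathbf{1}_{t_1},\mathbf{0},\dots,\mathbf{0})^T$ is dual-feasible with $A^T \mathbf{y}^* = A_1^T \mathbf{1} = \mathbf{c} \le \mathbf{c}$ and $\mathbf{b}^T \mathbf{y}^* = \mathfrak{J}_\lambda(\mathbf{u})^T \mathbf{c} = M$, so $\text{opt}(\lp_\cap^\vee) \ge M$ and therefore $\text{opt}(\lp_\cap) \ge M$ as well.

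The equivalence $(\text{iv})\Leftrightarrow(\text{v})$ is then a rewriting: using the description of $\sum_j\bigl((\RR J_d^{(j)})^\perp \cap \RR_{\ge 0}^m\bigr)$ from the first paragraph and the identity $\langle \mathfrak{J}_\lambda(\mathbf{u}), A^T \mathbf{y}\rangle = \mathbf{b}^T \mathbf{y}$, condition (v) becomes $\mathbf{b}^T \mathbf{y} \le M$ for all $\mathbf{y}$ feasible for $\lp_\cap^\vee$, i.e.\ $\text{opt}(\lp_\cap^\vee)\le M$; combined with $\ge M$, this is (iv). The equivalence $(\text{iii})\Leftrightarrow(\text{iv})$ is LP duality. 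For $(\text{ii})\Rightarrow(\text{iii})$, any $\mathbf{x}$ feasible for $\lp_{\cap,=}$ satisfies $\mathbf{c}^T\mathbf{x} = \mathbf{1}^T A_1 \mathbf{x} = \mathbf{1}^T \mathbf{b}^1 = M$, matching the lower bound and hence giving optimality.

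The final step, $(\text{iii})\Rightarrow(\text{ii})$, is where the only real subtlety appears: from an optimal $\mathbf{x}^*$ with $\mathbf{c}^T \mathbf{x}^* = M$ I must extract componentwise equality $A\mathbf{x}^* = \mathbf{b}$, not merely $A\mathbf{x}^* \ge \mathbf{b}$. The trick is to apply $\mathbf{c} = A_j^T \mathbf{1}$ separately for each $j$: for every $j$, $M = \mathbf{c}^T \mathbf{x}^* = \mathbf{1}^T A_j \mathbf{x}^* \ge \mathbf{1}^T \mathbf{b}^j = M$, so $A_j \mathbf{x}^* - \mathbf{b}^j$ is a nonnegative vector whose coordinates sum to $0$; this forces $A_j \mathbf{x}^* = \mathbf{b}^j$ for every $j$, and hence $A\mathbf{x}^* = \mathbf{b}$. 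Once the two bulleted facts are in place, this $\mathbf{1}^T$-trick is the main (and essentially the only) nontrivial move in the proof.
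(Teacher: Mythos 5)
Your proposal is correct and follows essentially the same route as the paper: you establish $\ker A = \RR J_d$, the identities $\mathbf{c}^T = \mathbf{1}^T A_j$ and $A\,\mathfrak{J}_\lambda(\mathbf{u})=\mathbf{b}$, the monoid-generator description of $\sum_j\bigl((\RR J^{(j)}_d)^\perp \cap \RR_{\ge0}^m\bigr)$ as $\{A^T\mathbf{y}:\mathbf{y}\ge\mathbf{0}\}$, and then close the cycle with the same $\mathbf{1}^T$-trick for (iii)$\Rightarrow$(ii), LP duality for (iii)$\Leftrightarrow$(iv), and the rewriting $\langle\mathfrak{J}_\lambda(\mathbf{u}),\mathbf{c}-A^T\mathbf{y}\rangle = M-\mathbf{b}^T\mathbf{y}$ for (iv)$\Leftrightarrow$(v). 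The only organizational difference is that you front-load the observation $\mathrm{opt}(\lp_\cap)=\mathrm{opt}(\lp_\cap^\vee)\ge M$, while the paper establishes the $\ge M$ bounds inline, but the substance is the same.
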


Statement (i) is another way of saying that $\mathfrak{J}_{\lambda}(\mathbf{u})$ is a positive real sum of monomials in  $\RR \tsr_\ZZ \U/J$.
Note that $(\RR J_d)^\perp = \sum_j (\RR J^{(j)}_d)^\perp$ and  $\mathbf{c} \in J_d^\perp$, hence
(v) says that $\langle \mathfrak{J}_\lambda(\mathbf{u}), f \rangle \ge 0$
for certain  $f$ that are a positive real sum of monomials and lie in  $(\RR J_d)^\perp$.
Thus (i) certainly implies (v), but the converse is not obvious.
This theorem therefore
provides us with a deeper understanding of the key ingredient
required for the Fomin-Greene approach (see the discussion at the end of \textsection \ref{ss FG setup}).

\begin{proof}
First note that $J^{(j)}_d = \ker(A_j)$ i.e.  $J^{(j)}_d = \{\mathbf{x} \in \U_d \mid A_j \mathbf{x} = \mathbf{0}\}$.
Hence $J_d = \ker(A)$.  There also holds
$A \mathfrak{J}_{\lambda}(\mathbf{u}) = \mathbf{b}$ by \eqref{e b def}.

We therefore have, for  $\mathbf{x} \in \RR^m$,
\begin{align*}
&\mathbf{x} \text{ is a solution of } \lp_{\cap, =} \\
\iff& A(\mathfrak{J}_\lambda(\mathbf{u})-\mathbf{x}) = \mathbf{0} \text{ and } \mathbf{x} \ge \mathbf{0} \\
\iff& \mathfrak{J}_\lambda(\mathbf{u}) - \mathbf{x} \in \RR J_d \text{ and }\mathbf{x} \ge \mathbf{0}.
\end{align*}
This proves the equivalence of (i) and (ii).
%

To prove that (ii) and (iii) are equivalent, we first  establish some basic facts.
The rows of  $A_j$ are the nonzero equivalence classes of~\,$\U/J^{(j)}$ of degree  $d$ and a  $\ZZ$-basis of  $(J^{(j)}_d)^\perp$.
Hence
\begin{align}
&\mathbf{1}^T A_j =  \mathbf{c}^T,  \label{e lp fact1}\\
&\mathbf{1}^T \mathbf{b}^j = \mathbf{1}^T A_j \mathfrak{J}_{\lambda}(\mathbf{u}) = \mathbf{c}^T \mathfrak{J}_{\lambda}(\mathbf{u}) = M, \label{e lp fact1b} \\
&\mathbf{c} \in (J^{(j)}_d)^\perp \subseteq J_d^\perp, \label{e lp fact2}
\end{align}
where  $\mathbf{1} \in \RR^{t_j}$ denotes the all-ones vector.

We now prove (iii) implies (ii).
Suppose $\lp_\cap$ has an optimal solution $\mathbf{x}^*$ with optimal value $M = \mathbf{c}^T \mathbf{x}^*$.
We must show that $A_j \mathbf{x}^* = \mathbf{b}^j$.
By \eqref{e lp fact1b}, the inequality  $\mathbf{b}^j \le A_j\mathbf{x}^*$, and \eqref{e lp fact1} we obtain
\begin{align*}
M = \mathbf{1}^T\mathbf{b}^j \le \mathbf{1}^TA_j\mathbf{x}^* = \mathbf{c}^T\mathbf{x}^* = M,
\end{align*}
so the inequality is an equality.
It follows that $A_j \mathbf{x}^* = \mathbf{b}^{j}$, as desired.

To prove (ii) implies (iii),
suppose $\mathbf{x}$ is a solution of $\lp_{\cap,=}$.
By the proof of the equivalence of (i) and (ii),
$\mathfrak{J}_{\lambda}(\mathbf{u})-\mathbf{x} \in \RR J_d$.
Hence by \eqref{e lp fact2},
$\mathbf{c}^T (\mathfrak{J}_{\lambda}(\mathbf{u})-\mathbf{x}) = 0$, which gives
$\mathbf{c}^T \mathbf{x} = \mathbf{c}^T \mathfrak{J}_{\lambda}(\mathbf{u}) = M$.
Hence $\lp_\cap$ has optimal value $\le M$.
To see that  $\lp_\cap$ has optimal value $\ge M$, we compute using \eqref{e lp fact1b} and \eqref{e lp fact1} that for any solution  $\mathbf{x}$ of  $\lp_\cap$,
\begin{align*}
M = \mathbf{1}^T\mathbf{b}^j \le \mathbf{1}^TA_j\mathbf{x} = \mathbf{c}^T\mathbf{x}.
\end{align*}

Statements (iii) and (iv) are equivalent by the Duality Theorem since  $\lp_\cap$ and  $\lp_{\cap}^\vee$ are always feasible:
$\lp_\cap$ is always feasible because $A \mathbf{x} \ge \mathbf{b}$ whenever the $x_i$ are sufficiently large,
and  $\lp_\cap^\vee$ is always feasible because $\mathbf{y} = \mathbf{0}$ is always a solution.

To prove the equivalence of (iv) and (v), we first claim that
as $\mathbf{y}$ ranges over the feasible region of $\lp_\cap^\vee$, the vector $\mathbf{g} = A^T\mathbf{y} \in \RR^m$ ranges over
\[ \bigg( \sum_j \big((\RR J^{(j)}_d)^\perp \cap \RR_{\ge 0}^m \big) \bigg) \, \bigcap \, \big\{\mathbf{z} \in \RR^m \mid \mathbf{z} \le \mathbf{c} \big\}. \]
This follows from the fact that the rows of  $A_j$ are the nonzero equivalence classes of~\,$\U/J^{(j)}$ of degree  $d$
and hence are a monoid basis of $(\RR J^{(j)}_d)^\perp \cap \RR_{\ge 0}^m$.

Let $\mathbf{g} = A^T\mathbf{y}$. We compute using $A \mathfrak{J}_{\lambda}(\mathbf{u}) = \mathbf{b}$,
\begin{align*}
\langle \mathfrak{J}_{\lambda}(\mathbf{u}), \mathbf{g} \rangle = \mathbf{g}^T \mathfrak{J}_{\lambda}(\mathbf{u}) = \mathbf{y}^T A \mathfrak{J}_{\lambda}(\mathbf{u}) = \mathbf{y}^T \mathbf{b}.
\end{align*}
We conclude that
\begin{align}
\langle \mathfrak{J}_{\lambda}(\mathbf{u}), \mathbf{c}-\mathbf{g} \rangle = M-\mathbf{b}^T \mathbf{y} \ge 0 \iff \mathbf{b}^T \mathbf{y} \le M.  \label{e lp iv v}
\end{align}
Hence by \eqref{e lp iv v} and the claim above, every solution $\mathbf{y}$ of  $\lp_\cap^\vee$ has $\mathbf{b}^T \mathbf{y}  \le M$ if and only if (v) holds.
Moreover, if we let $\mathbf{y}$ be the vector with a 1 in its first  $t_1$ coordinates and 0's elsewhere, then  $\mathbf{g} = A^T \mathbf{y} = \mathbf{c}$ and $\mathbf{b}^T \mathbf{y} = M$, so  $\lp_\cap^\vee$ has optimal value  $\ge M$.
Hence (iv) is equivalent to the statement that $\lp_\cap^\vee$ has optimal value  $\le M$, and we just showed this to be equivalent to (v).
\end{proof}

\section{A D~graph whose generating function is not Schur positive}
\label{s A D graph whose generating function is not Schur positive}
For this section, we set   $N=8$ so that $\U = \ZZ\langle u_1,\ldots,u_8\rangle$.
Let  $\lltlp$ be the linear program $\lp_{\cap}^\vee$ from Theorem~\ref{t linear program 2} with  $d=8$, $\lambda = (2,2,2,2)$,
and the following bijectivizations
of~\,$\U/\Irkst$\,:
\[ \U/J^{(k)} =  \U/\Ilamst{k} \ \ \text{for $k = 1,2, \dots 8$,} \quad \text{and }\, \U/J^{(9)} = \U/\Iplacst. \]
The bijectivizations $\U/\Ilamst{k}$ and $\U/\Iplacst$ are defined in \textsection\ref{ss Lams algebra} and Example \ref{ex plac}, respectively.

We will see that an optimal integer solution of  $\lltlp$ yields a  D$_0$~graph whose generating function is not Schur positive.
We go on to construct a D~graph on the same vertex set as this D$_0$~graph.

\subsection{The linear program  $\lltlp$}
\label{ss linear program lltlp}
Here is some specific data about the linear program  $\lltlp$.
Since we are optimizing  $\mathbf{b}^T \mathbf{y}$, we may delete all entries of  $\mathbf{b}$ that are 0 and delete the corresponding rows of $A$.
Also, since all of the columns of  $A$ and entries of  $\mathbf{c}$ corresponding to $\e{w} \notin \W_d$ are zero, we may as well delete these columns and entries.
Let  $A'$, $A'_i$, $\mathbf{b}'$,  $\mathbf{c}'$,  etc. denote the matrices and vectors associated to this smaller linear program obtained by removing zeros, and let this linear program be  $\lltlp'$.
Note that  $\mathbf{c}'$ is the all-ones vector.
Also note that  $A'$ has columns indexed by  $\W_d = \S_8$;
assume we have fixed some ordering $\e{v^1}, \ldots, \e{v^{8!}}$ of $\S_8$ which labels these columns.

To give an idea of the size of this linear program, here are the  $\mathbf{b}^{\prime i}$\,:
{\small \[\begin{array}{l}
(\mathbf{b}^{\prime1})^T = \mat{ 1& 1& 1& 1& 1& 1& 1& 1& 1& 1& 1& 1& 1& 1 }\\[1mm]
(\mathbf{b}^{\prime2})^T = \mat{ 1& 1& 1& 1& 1& 1& 1& 1& 1& 1& 1& 1& 1& 1 }\\[1mm]
(\mathbf{b}^{\prime3})^T = \mat{ 1& 1& 1& 1& 1& 1& 1& 1& 1& 1& 1& 1& 1& 1 }\\[1mm]
(\mathbf{b}^{\prime4})^T = \mat{ 1& 1& 1& 1& 1& 1& 1& 1& 1& 1& 1& 1& 1& 1 }\\[1mm]
(\mathbf{b}^{\prime5})^T = \mat{ 1& 1& 1& 1& 1& 1& 1& 1& 1& 1& 1& 1& 1& 1 }\\[1mm]
(\mathbf{b}^{\prime6})^T = \mat{ 1& 1& 2& 1& 1& 1& 1& 1& 1& 1& 1& 1& 1 }\\[1mm]
(\mathbf{b}^{\prime7})^T = \mat{ 1& 1& 1& 1& 1& 1& 1& 1& 2& 1& 1& 1& 1 }\\[1mm]
(\mathbf{b}^{\prime8})^T = \mat{ 1& 1& 1& 2& 1& 2& 1& 1& 2& 1& 1 }\\[1mm]
(\mathbf{b}^{\prime9})^T  = \mat{ 1& 1& 1& 1& 1& 1& 1& 1& 1& 1& 1& 1& 1& 1 }
\end{array}\] }

The rows of  $A'_9$  correspond to the Knuth equivalence classes of shape  $(2,2,2,2)$ contained in $\S_8$.
More precisely, the rows of  $A'_9$ are in bijection with $\SYT(2,2,2,2)$ so that the row labeled by $T \in \SYT(2,2,2,2)$ is the zero-one vector of length $8!$ whose support
is $\{i \in \{1,2,\dots,8!\} \mid P(\e{v^i}) = T\}$.
Here, $\SYT(2,2,2,2)$ denotes the set of standard Young tableaux of shape $(2,2,2,2)$.
For  $k \le 8$, the rows of  $A'_k$ correspond to certain equivalence classes from Proposition \ref{p Ilamst equiv classes}.

\subsection{Solving  $\lltlp$}

A \emph{weighted packing linear program} is a linear program of the form:
maximize $\mathbf{b}^T \mathbf{y}$ over the vectors $\mathbf{y} \in \RR^t$ satisfying
\begin{align*}
A^T \mathbf{y} & \le \mathbf{1},\\
\mathbf{y} & \ge \mathbf{0},
\end{align*}
where $A$ has entries in  $\{0,1\}$ and has nonzero rows, and $\mathbf{b}$ has entries in  $\NN$.
Let  $S_i$ denote the support of the  $i$-th row of $A$. The  integer solutions of this linear program correspond to packings of the $S_i$, i.e., collections of $S_i$ such that no two intersect. The
optimization problem is to find a packing of maximum weight, where the weight of a packing  $\{S_i\}_{i \in I}$ ($I \subseteq [t]$) is  $\sum_{i \in I} b_i$.

Any weighted packing linear program  $\lp_\text{pack}$ can be converted into a
maximum independent set problem as follows:
let  $A, \mathbf{b}, S_i$ be the data associated to $\lp_\text{pack}$ as above.
Define $G(A, \mathbf{b})$ to be the graph with
\begin{itemize}
\item vertex set  $V := \bigsqcup_{i=1}^t V_i$, where  $V_i$ is a set of size $b_i$,
\item edges between all vertices in  $V_i$ and all vertices in  $V_j$ if $S_i \cap S_j \ne \varnothing$ and  $i \ne j$,
\item no edge between any vertex in  $V_i$ and any vertex in  $V_j$ if $S_i \cap S_j = \varnothing$,
\item no edge between any two vertices of  $V_i$.
\end{itemize}
Define a map $\Theta$ by
\[\displaystyle \Theta: \{0,1\}^t \to \text{ subsets of  $V$,} \quad \mathbf{y} \mapsto \bigcup_{\{i \in [t] \,\mid\, y_i = 1\}} V_i. \]

\begin{proposition}\label{p packing}
Maintain  the notation above.
The map $\Theta$ yields a bijection between integer solutions of  $\lp_\text{pack}$  and independent sets of  $G(A, \mathbf{b})$ that intersect each  $V_i$ in all of  $V_i$ or not at all.
Moreover, $\mathbf{y}^*$ is an optimal integer solution of  $\lp_\text{pack}$ if and only if $\Theta(\mathbf{y}^*)$ is a maximum independent set of  $G(A, \mathbf{b})$.
\end{proposition}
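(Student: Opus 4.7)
The plan is to proceed in three stages: characterize integer solutions of $\lp_\text{pack}$ as indicator vectors of packings, verify that $\Theta$ restricts to a bijection onto a specific family of independent sets, and finally argue that the maximum independent set of $G(A,\mathbf{b})$ can always be taken from that family.

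First I would argue that any integer solution $\mathbf{y}$ of $\lp_\text{pack}$ must have entries in $\{0,1\}$. Since every row of $A$ is assumed nonzero, for each $i$ there is some $j$ with $A_{ij}=1$, and the constraint $(A^T\mathbf{y})_j = \sum_{i'} A_{i'j}\, y_{i'} \leq 1$ together with $\mathbf{y}\geq \mathbf{0}$ forces $y_i \in \{0,1\}$. Setting $I(\mathbf{y}) = \{i : y_i = 1\}$, the inequality $A^T\mathbf{y}\leq \mathbf{1}$ then says exactly that the sets $\{S_i : i \in I(\mathbf{y})\}$ are pairwise disjoint, i.e.\ that they form a packing.

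Next I would verify the claimed bijection. By construction $\Theta(\mathbf{y}) = \bigsqcup_{i \in I(\mathbf{y})} V_i$ is a union of entire $V_i$'s, and since the $V_i$ are disjoint in $V$ the set $\Theta(\mathbf{y})$ determines $I(\mathbf{y})$, hence $\mathbf{y}$. That $\Theta(\mathbf{y})$ is independent follows at once from the definition of $G(A,\mathbf{b})$: there are no edges inside a single $V_i$, while an edge between $V_i$ and $V_j$ with $i\neq j$ occurs precisely when $S_i\cap S_j\neq\varnothing$, which is ruled out by the packing condition. Conversely, any independent set which is a union of entire $V_i$'s comes from the characteristic vector of the corresponding index set, and reversing the above reasoning shows this vector is a solution.

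Finally, for the optimization claim I would compute
\[
\mathbf{b}^T\mathbf{y} \;=\; \sum_{i \in I(\mathbf{y})} b_i \;=\; \sum_{i \in I(\mathbf{y})} |V_i| \;=\; |\Theta(\mathbf{y})|,
\]
so comparing objective values reduces to comparing sizes of the corresponding independent sets. The step I expect to be the main obstacle is verifying that a \emph{maximum} independent set of $G(A,\mathbf{b})$ can always be realized in the image of $\Theta$, since a priori some maximum independent set could intersect a $V_i$ only partially. To handle this I would use a completion argument: if $S$ is any independent set and $S\cap V_i\neq\varnothing$, then $S$ is disjoint from every $V_j$ with $j\neq i$ and $S_i\cap S_j\neq\varnothing$ (else $S$ would contain an edge); since $V_i$ has no internal edges, replacing $S\cap V_i$ by all of $V_i$ preserves independence and weakly enlarges $S$. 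Iterating over all $i$ with $S\cap V_i\neq\varnothing$ yields an independent set $S'\supseteq S$ of the form $\Theta(\mathbf{y}')$ with $|S'|\geq |S|$. Applied to a maximum independent set this gives the required realization, and combining it with the identity $\mathbf{b}^T\mathbf{y} = |\Theta(\mathbf{y})|$ yields the biconditional in the second statement.
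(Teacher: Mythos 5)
Your proof is correct and follows essentially the same route as the paper's: establish that integer solutions are zero-one vectors, unpack the constraint $A^T\mathbf{y}\le \mathbf{1}$ into the pairwise-disjointness of the $S_i$ in the support, translate via $\Theta$, and use $\mathbf{b}^T\mathbf{y}=|\Theta(\mathbf{y})|$. You do add one worthwhile step the paper leaves implicit: the completion argument showing that any independent set of $G(A,\mathbf{b})$ can be enlarged to one of the form $\bigcup_{i\in I}V_i$ without losing independence, which is what actually justifies passing from ``maximum over integer solutions'' to ``maximum independent set of $G(A,\mathbf{b})$'' in the second claim.
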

\begin{proof}
First note that any integer solution of $\lp_\text{pack}$ is a zero-one vector.
The first statement then follows from the following chain of equivalences. For any $\mathbf{y} \in \{0,1\}^t$,
\begin{align*}
     & A^T \mathbf{y} \le \mathbf{1}  \\
\iff & \text{$S_i \cap S_j = \varnothing$ for each pair $i, j$ of distinct indices in the support of $\mathbf{y}$}\\
\iff & \text{there is no edge between $\Theta(\mathbf{y}) \cap V_i$ and $\Theta(\mathbf{y}) \cap V_j$ for all $i,j$.
}
\end{align*}
The second statement then follows from  $\mathbf{b}^T\mathbf{y} = |\Theta(\mathbf{y})|$.
\end{proof}

For the linear program $\lltlp'$,  $\mathbf{b}' \in \NN^{121}$ and  $G(A', \mathbf{b}')$ has  $126$ vertices.  The maximum size of an independent set is 15.
This computation takes 1-2 seconds on a machine with a 32-core,
2.7 GHz processor and 25 GB of RAM
using the branch and bound algorithm \cite{BronKerbosch} built into Magma \cite{Magma}.
Hence by Proposition \ref{p packing}, $\lltlp'$ has optimal integer value 15 and its optimal integer solutions are in bijection with
independent sets of  $G(A',\mathbf{b}')$ of size 15.

\subsection{An optimal integral solution of  $\lltlp$}
\label{ss solution to lltlp}
There are many optimal integer solutions of $\lltlp'$.
We now describe one such solution  $\mathbf{y}^*$ in detail and its consequences for Schur positivity of  D$_0$~graphs.

Recall that the rows of  $A'$ and rows of  $\mathbf{y}^*$ are naturally labeled by certain nonzero equivalence classes of  the $\U/J^{(j)}$.
The solution $\mathbf{y}^*$ is  a zero-one vector whose support corresponds to the following equivalence classes:
13 of the 14 Knuth equivalence classes of shape $(2,2,2,2)$,
an equivalence class of~\,$\U/\Ilamst{3}$, and an equivalence class of~\,$\U/\Ilamst{7}$, with precise descriptions given by
\eqref{e 13equivclasses}, \eqref{e equivclassk3}, and \eqref{e equivclassk7}, respectively.
\begin{align}
 &\left\{\e{w} \in \S_8 \mid P(\e{w}) \text{ has shape } (2,2,2,2), \, P(\e{w}) \ne {\tiny \tableau{1&2\\3&4\\5&6\\7&8} }\right\}  &&\text{ (size $13 \cdot 14$)}, \label{e 13equivclasses}\\
 &\ \{\e{w} \in \S_8 \mid \e{w} \in \Wi{3}(\bm{\beta}_3), \, \invi{3}(\e{w}) = 9\}  &&\text{ (size 126)}, \label{e equivclassk3}\\
 &\ \{\e{w} \in \S_8\mid \e{w} \in \Wi{7}(\bm{\beta}_7), \, \invi{7}(\e{w}) = 18\}  &&\text{ (size 316)}, \label{e equivclassk7}
\end{align}
where $\bm{\beta}_3$ (resp. $\bm{\beta_7}$) is the 3-tuple (resp. 7-tuple) of skew shapes with contents given by
\begin{align*}
\bm{\beta}_3 &= (33,333,333) / (22, 222, 222). && \text{Its shifted contents are }
{\tiny\left(\tableau{& & 6\\& & 3 \\ & & },\tableau{&&7 \\& & 4 \\ & &1},\tableau{ &&8\\ & &5 \\ & &2 }\right)}.\\
{\tiny \bm{\beta}_7 }&{\tiny = (2, 2, 1, 1, 1, 1, 1) / (1, \varnothing, \varnothing, \varnothing, \varnothing, \varnothing, \varnothing). }&& \text{Its shifted contents are }
{\tiny\left(\tableau{& 7},\tableau{1 & 8},\tableau{2}, \tableau{3}, \tableau{4},\tableau{5}, \tableau{6}\right)}.
\end{align*}
For example, the equivalence class \eqref{e equivclassk3} contains $\e{78634521}$ and the equivalence class \eqref{e equivclassk7} contains $\e{75183642}$.

Let $W^* \subseteq \S_8$ be the union of the equivalence classes \eqref{e 13equivclasses}, \eqref{e equivclassk3}, and \eqref{e equivclassk7}.
Set $\bar{W}^* = \S_8 \setminus W^*$.
Let $\mathbf{g}^* = {A'}^T \mathbf{y}^*$, the zero-one vector (in the basis $\e{v^1}, \ldots, \e{v^{8!}}$) whose support is $W^*$,
which may be identified with the element $\sum_{\e{w} \in  W^*} \e{w} \in \U_8$.
Let  $f^* = \mathbf{c}' - \mathbf{g}^* = \sum_{\e{w} \in \bar{W}^*} \e{w} \in \U_8$.
Note that $f^*, \mathbf{g}^*\in J^\perp$ and since the $\U/J^{(j)}$ are bijectivizations of~\,$\U/\Irkst,$ $J^\perp \subseteq (\Irkst)^\perp$.
Hence $W^*$ and $\bar{W}^*$ are KR sets.

For the linear program  $\lltlp$, the  $M$ from Theorem~\ref{t linear program 2} is given by
$M = \mathbf{c}^T \mathfrak{J}_\lambda(\mathbf{u}) = |\SYT(2,2,2,2)| = 14$.
Since  $\lltlp$ has optimal value  $\ge 15$, (iv) from Theorem~\ref{t linear program 2} is false,
hence (i)--(v) are all false.
This has two important consequences, which we summarize in the corollaries below.
A further consequence will be discussed in Section \ref{s Concluding remarks}.

\begin{corollary}\label{c D0 not schur pos}
The generating function $\Delta(f^*)$ of $f^*$ is not Schur positive.
Moreover, by Proposition-Definition \ref{pd KR set},
there is a D$_0$~graph on the vertex set $\bar{W}^*$ whose generating function is not Schur positive.
\end{corollary}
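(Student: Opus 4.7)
The plan is very short: the statement follows directly from the strict inequality $15 > M = 14$ just established for $\lltlp'$, combined with the equivalences of Theorem~\ref{t linear program 2}. The main ingredient is a single pairing computation, and no serious obstacle arises---all the real work has been done in solving $\lltlp'$; this corollary merely reads off its consequence.

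First I would invoke Proposition-Definition~\ref{pd KR set} to conclude that, since $\bar{W}^*$ is a KR set, $f^* = \sum_{\e{w} \in \bar{W}^*}\e{w}$ lies in $(\Irkst)^\perp$. Lemma~\ref{l es commute} verifies the hypothesis of Theorem~\ref{t basics} for the ideal $\Irkst$, so the coefficient of $s_{(2,2,2,2)}(\mathbf{x})$ in $\Delta(f^*)$ equals $\langle \mathfrak{J}_{(2,2,2,2)}(\mathbf{u}), f^*\rangle$. Next I would compute this pairing by writing $f^* = \mathbf{c}' - \mathbf{g}^*$ with $\mathbf{g}^* = (A')^T\mathbf{y}^*$ as in \textsection\ref{ss solution to lltlp}: by bilinearity,
\[\langle \mathfrak{J}_{(2,2,2,2)}(\mathbf{u}),\, f^* \rangle \;=\; \langle \mathfrak{J}_{(2,2,2,2)}(\mathbf{u}),\, \mathbf{c}'\rangle \;-\; (\mathbf{y}^*)^T A' \mathfrak{J}_{(2,2,2,2)}(\mathbf{u}) \;=\; M - (\mathbf{y}^*)^T \mathbf{b}',\]
where the first term equals $M=14$ as recorded in \textsection\ref{ss solution to lltlp}, and the second equals the objective value $15$ of the optimal integer solution $\mathbf{y}^*$, using the identity $A'\mathfrak{J}_{(2,2,2,2)}(\mathbf{u}) = \mathbf{b}'$ coming from \eqref{e b def}. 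Thus the coefficient is $14 - 15 = -1$, and $\Delta(f^*)$ is not Schur positive.

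To finish, I would invoke Proposition-Definition~\ref{pd KR set} once more: because $\bar{W}^*$ is a KR set, it is the vertex set of some D$_0$ graph $H^*$. The generating function of $H^*$ equals $\Delta(f^*)$ by the definition in \textsection\ref{ss KR sets}, and this function has just been shown to fail Schur positivity, as desired.
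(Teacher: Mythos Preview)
Your proposal is correct and follows essentially the same approach as the paper: you compute $\langle \mathfrak{J}_{(2,2,2,2)}(\mathbf{u}), f^* \rangle = M - (\mathbf{y}^*)^T\mathbf{b}' = 14 - 15 = -1$ via $f^* = \mathbf{c}' - \mathbf{g}^*$ and $A'\mathfrak{J}_\lambda(\mathbf{u}) = \mathbf{b}'$, exactly as the paper does, and then invoke Proposition-Definition~\ref{pd KR set} for the D$_0$ graph on $\bar{W}^*$. The only cosmetic difference is that the paper appeals to Corollary~\ref{c symmetric bijectivizations} (the D$_0$-graph specialization) rather than directly to Theorem~\ref{t basics}, but the content is identical.
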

To see in more detail why $\Delta(f^*)$ is not Schur positive, we compute as in the proof of Theorem~\ref{t linear program 2}:
\begin{align*}
\langle \mathfrak{J}_{(2,2,2,2)}(\mathbf{u}), f^* \rangle = M - \langle \mathfrak{J}_{(2,2,2,2)}(\mathbf{u}), \mathbf{g}^* \rangle =M-\mathbf{y}^{* T} \mathbf{b}' = M-15 = -1.
\end{align*}
Hence the coefficient of  $s_{(2,2,2,2)}(\mathbf{x})$ in  $\Delta(f^*)$ is  $-1$.
It may also be instructive to see the Schur expansions of $\Delta(\mathbf{g}^*)$ and $\Delta(f^*)$ in full:
\[\Delta(\mathbf{g}^*) =
s_{41111} + s_{3221} + 3s_{32111} + s_{311111} + 15s_{2222} + 2s_{22211} + 2s_{221111}.
\]
\begin{align*}
&\Delta(f^*) = \Delta\bigg(\sum_{\e{w} \in \S_8} \e{w} - \mathbf{g}^*\bigg)  =
\sum_{|\mu| = 8} |\SYT(\mu)| s_\mu(\mathbf{x}) - \Delta(\mathbf{g}^*)  =\\
&s_{8} + 7s_{71} + 20s_{62} + 21s_{611} + 28s_{53} + 64s_{521} + 35s_{5111} + 14s_{44} + 70s_{431}  + \\
&56s_{422} + 90s_{4211} +
34s_{41111} +
42s_{332} + 56s_{3311} + 69s_{3221} + 61s_{32111} + \\
&20s_{311111} - s_{2222} + 26s_{22211} + 18s_{221111} + 7s_{2111111} + s_{11111111}.
\end{align*}
%

\begin{corollary}
There is no way to write $\mathfrak{J}_{(2,2,2,2)}(\mathbf{u})$ as a positive sum of monomials (even over  $\RR$) that holds simultaneously in  $\U/\Ilamst{k}$,  $k \in [8]$, and  $\U/\Iplacst$.
Hence (by Proposition \ref{p lam and assaf}) there is also no way write $\mathfrak{J}_{(2,2,2,2)}(\mathbf{u})$ as a positive sum of monomials over  $\RR$ that holds simultaneously in
all the $\U/\Iassafst{k}$ for $k \in [8]$.
\end{corollary}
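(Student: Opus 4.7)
The plan is to invoke Theorem~\ref{t linear program 2} with the data fixed at the start of Section~\ref{s A D graph whose generating function is not Schur positive}: $J^{(k)} = \Ilamst{k}$ for $k \in [8]$, $J^{(9)} = \Iplacst$, and $J = \bigcap_{j=1}^{9} J^{(j)}$. A real positive monomial expression $\mathfrak{J}_{(2,2,2,2)}(\mathbf{u}) = \sum_i c_i\, \e{w^i}$ (with $c_i \ge 0$) holds simultaneously in $\RR \tsr_\ZZ \U/\Ilamst{k}$ for all $k \in [8]$ and in $\RR \tsr_\ZZ \U/\Iplacst$ if and only if it holds in $\RR \tsr_\ZZ \U/J$, which is precisely condition~(i) of Theorem~\ref{t linear program 2}. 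So the first sentence of the corollary is the assertion that~(i) fails, and the second sentence will follow from the first by Proposition~\ref{p lam and assaf}.

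To see that~(i) fails, I would exhibit the explicit certificate already produced in \textsection\ref{ss solution to lltlp}. The vectors $\mathbf{g}^* = \sum_{\e{w} \in W^*}\e{w}$ and $f^* = \mathbf{c} - \mathbf{g}^* = \sum_{\e{w} \in \bar{W}^*}\e{w}$ satisfy: (a) $\mathbf{g}^* \in \sum_j \bigl((\RR J^{(j)}_d)^\perp \cap \RR_{\ge 0}^m\bigr)$ by construction, since it equals ${A'}^T \mathbf{y}^*$ on the trimmed coordinates and is zero elsewhere; (b) $\mathbf{g}^* \le \mathbf{c}$, since it is a zero-one vector supported on $\W_d$; and (c) the pairing
\[\langle \mathfrak{J}_{(2,2,2,2)}(\mathbf{u}),\, \mathbf{c} - \mathbf{g}^* \rangle = M - \mathbf{b}^T \mathbf{y}^* = 14 - 15 = -1 < 0\]
shows that condition~(v) of Theorem~\ref{t linear program 2} fails. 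By the equivalence of~(i) and~(v), condition~(i) is false, proving the first sentence. (Equivalently, one could observe that the dual program $\lltlp = \lp_\cap^\vee$ has optimal value at least $15 > 14 = M$, directly falsifying condition~(iv).)

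For the second sentence I would argue by contraposition. Suppose $\mathfrak{J}_{(2,2,2,2)}(\mathbf{u})$ admitted a positive real expression in monomials holding simultaneously in every $\U/\Iassafst{k}$ for $k \in [8]$. Proposition~\ref{p lam and assaf} supplies a surjection $\U/\Iassafst{k} \twoheadrightarrow \U/\Ilamst{k}$ for each such $k$, through which the expression descends to a simultaneous expression in every $\U/\Ilamst{k}$. Moreover the case $k = 1$ of the hypothesis directly gives the expression in $\U/\Iassafst{1} = \U/\Iplacst$. Combining these contradicts the first sentence of the corollary.

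The heavy lifting---constructing the optimal integer solution $\mathbf{y}^*$ via the maximum-independent-set computation of \textsection\ref{ss linear program lltlp}--\ref{ss solution to lltlp}, and establishing the linear-programming duality of Theorem~\ref{t linear program 2}---has already been done, so no genuine obstacle remains. The present corollary is essentially bookkeeping that packages the numerical failure $\langle \mathfrak{J}_{(2,2,2,2)}(\mathbf{u}), f^*\rangle = -1$ into an impossibility statement for positive monomial expressions.
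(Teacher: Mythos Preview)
Your proposal is correct and follows essentially the same approach as the paper: the paper observes that $\lltlp$ has optimal value $\ge 15 > 14 = M$, so condition~(iv) (hence~(i)) of Theorem~\ref{t linear program 2} fails, which is the first statement; the second statement then follows from Proposition~\ref{p lam and assaf} together with $\U/\Iassafst{1} = \U/\Iplacst$, exactly as you argue. Your write-up is slightly more detailed (spelling out the certificate via~(v) and the contrapositive for the second sentence), but the substance is identical.
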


\subsection{Forcing axiom 5}
\label{ss forcing axiom 5}
In this subsection we construct a D$_5$ graph (a  D$_0$~graph satisfying axiom 5) whose generating function is
not Schur positive.
The basic idea is to
start with a partial  D$_0$~graph on the vertex set $\bar{W}^*$ from \textsection\ref{ss solution to lltlp} and repeatedly
add edges that are forced by axiom 5.
To fully understand this construction, we first need the following functions which add edges to force axiom 5.

Recall from \textsection\ref{ss D0 graphs} that the type of a KR square of a  partial  D$_0$~graph
is $\varnothing$, 0, K, or R.
Note that a partial D$_0$~graph $H$ is a D$_0$~graph if and only if the type of any KR square of $H$ is not 0.

\begin{myalgorithm}
\label{a forceaxiom5}
The functions ForceAxiom5 and ForceAxiom5Step below take as input a partial D$_0$~graph $H$ and output a partial D$_0$~graph containing $H$ or FAILURE. \;

\begin{algorithm}[H]
\DontPrintSemicolon
\SetKwProg{Fn}{function}{}{end function}
\SetAlgoNoLine
\Fn{ForceAxiom5Step$(H)$}{
    For every  $|j-i| \ge 3$ and  $i$-edge  $\{\e{v},\e{w}\}$ of $H$ such that  $\e{v}$ admits a  $j$-neighbor, ensure that the type  $t_\e{v}^j$ of the  KR$_j$ square containing $\e{v}$ is the same as the type  $t_\e{w}^j$ of the  KR$_j$ square containing $\e{w}$.  This means \;
\begin{itemize}
\item if  $t_\e{v}^j = t_\e{w}^j$, do nothing
\item if  $t_\e{v}^j \neq t_\e{w}^j$ and both types $t_\e{v}^j, t_\e{w}^j$ are not 0, then return FAILURE
\item if  $t_\e{v}^j \neq 0$ and  $t_\e{w}^j = 0$, set  $t_\e{w}^j := t_\e{v}^j$ and add the two corresponding edges to $H$
\item if  $t_\e{w}^j \neq 0$ and  $t_\e{v}^j = 0$, set  $t_\e{v}^j := t_\e{w}^j$ and add the two corresponding edges to $H.$
\end{itemize}
Return the resulting graph. \;
}
\end{algorithm}
\begin{algorithm}[H]
\DontPrintSemicolon
\SetKwProg{Fn}{function}{}{end function}
\SetAlgoNoLine
\Fn{ForceAxiom5$(H)$}{
    \Repeat{$H' = H$ or  $H=$ FAILURE}{
        $H' := H$\;
        $H$ := ForceAxiom5Step$(H)$\;
    }
    \Return $H$ \;
}
\end{algorithm}
\end{myalgorithm}

Note that ForceAxiom5Step uses description (iii) of axiom 5 from Proposition \ref{p axiom5 patterns} below.  Since
ForceAxiom5Step checks that there are no violations to axiom 5 and only adds edges that are forced by axiom 5, the following proposition is immediate.
\begin{proposition}
If ForceAxiom5 outputs a  D$_0$~graph, then this graph satisfies axiom 5.
If ForceAxiom5$(H)$ outputs FAILURE, then there is no  D$_5$ graph containing $H$.
\end{proposition}

\begin{myalgorithm} \label{a grow D5}
The following recursive function takes as input a partial  D$_0$~graph  $H$
and outputs either a D$_5$ graph containing  $H$ or FAILURE.

\begin{algorithm}[H]
\DontPrintSemicolon
\SetKwProg{Fn}{function}{}{end function}
\SetAlgoNoLine
\Fn{GrowD5Graph$(H)$}{
    \If{ there are no undetermined KR squares of $H$}{
        \Return  $H$ \;
    }
    Choose an undetermined KR square $X$ of $H$. Declare  the type of  $X$ to be Knuth, add the two corresponding edges to $H$, and let  $H'$ denote the resulting graph. \;
    $H''$ := ForceAxiom5$(H')$ \;
    \If{$H''$ = FAILURE}{
        \Return FAILURE \;
    }
    \Return GrowD5Graph$(H'')$ \;
}
\end{algorithm}
\end{myalgorithm}

\begin{definition}\label{d KR set to D0}
Given a KR set $W$, the \emph{minimal partial  D$_0$~graph on  $W$} is the partial  D$_0$~graph  $H$ on the vertex set  $W$ having
as many undetermined KR squares as possible.
Equivalently,  $H$ is the graph on the vertex set  $W$ such that  $\{\e{v},\e{w}\}$ is an $i$-edge if and only if
 $\{\e{v},\e{w}\} = X \cap W$ for some KR$_i$ square $X$.
\end{definition}

Although Algorithm \ref{a grow D5} comes nowhere close to searching the entire space of  D$_0$~graphs on a vertex set for one that satisfies axiom 5,
it is good enough to find a D$_5$ graph on the vertex set  $\bar{W}^*$ from \textsection\ref{ss solution to lltlp}.
Specifically, let  $H^*$ denote the minimal partial  D$_0$~graph on  $\bar{W}^*$.
Order the KR squares of $\S_8$ as follows:
\[\parbox{13cm}{associate to each KR$_i$ square $\e{v} \stars \e{w}$ the string $ivbacw$
and then order these strings lexicographically.}\]
If the smallest undetermined KR square of $H$ is always chosen in GrowD5Graph,
then GrowD5Graph$(H^*)$ returns a  D$_5$ graph on $\bar{W}^*$.
By Corollary \ref{c D0 not schur pos}, its generating function is not Schur positive.
The largest component of this graph, which has 5322 vertices, is also a  D$_5$ graph whose generating function is not Schur positive.

\subsection{Reformulations of axiom 5}
Here we reformulate axiom 5 in terms of KR squares and give a slightly smarter version of the function ForceAxiom5.

\begin{definition}
For a partial D$_0$~graph $H$ of degree  $n$, and indices $|j-i| \geq 3$,
a \emph{KR$_{i,j}$ hypercube}  $Z$ of $H$
consists of a matrix of KR squares of $H$ of the form
\begin{equation}
\label{e hypercube}
 \begin{array}{cc|cc}
\e{x\stars yedfz} & \e{x\stars yefdz} & \e{xbacy\starsdef z} & \e{xbcay\starsdef z }   \\
\e{x\stars ydfez }& \e{x\stars yfdez} & \e{xacby\starsdef z} & \e{xcaby\starsdef z}
\end{array}
\end{equation}
together with their types, called the \emph{pattern} of  $Z$, notated as
\begin{equation}
\label{e pattern}
t =
\matdnobrac{
t_1^i & t_2^i & t_1^j & t_2^j    \\
t_3^i & t_4^i & t_3^j & t_4^j
},
\end{equation}
where $a < b < c$ and $d<e<f$ are letters and $\e{x},\e{y},\e{z}$ are words. Also, $i = |\e{x}|+2$,  $j = |\e{x}|+|\e{y}|+5$ so that the four KR squares on the left (resp. right) of \eqref{e hypercube} are KR$_i$ (resp. KR$_j$) squares.
\end{definition}
Note that the pattern of any KR$_{i,j}$ hypercube of a D$_0$~graph only contains the types $\varnothing$, K, and R.

\begin{example}
\begin{figure}
 \begin{tikzpicture}[xscale = 2.6,yscale = 1.4]
\tikzstyle{vertex}=[inner sep=0pt, outer sep=4pt]
\tikzstyle{aedge} = [draw, thin, ->,black]
\tikzstyle{edge} = [draw, -,black]
\tikzstyle{thickedge} = [draw, very thick, black]
\tikzstyle{LabelStyleH} = [text=black, anchor=south]
\tikzstyle{LabelStyleV} = [text=black, anchor=east]
\foreach \z in {2.8} {
\foreach \y in {2.2} {
\node[vertex] (v1) at (1,2+\z){\footnotesize$\e{61784253 }$};
\node[vertex] (v2) at (2,2+\z){\footnotesize$\e{67184253 }$};
\node[vertex] (v3) at (1,1+\z){\footnotesize$\e{17684253 }$};
\node[vertex] (v4) at (2,1+\z){\footnotesize$\e{71684253 }$};
\node[vertex] (v5) at (1+\y,2+\z){\footnotesize$\e{61784523 }$};
\node[vertex] (v6) at (2+\y,2+\z){\footnotesize$\e{67184523 }$};
\node[vertex] (v7) at (1+\y,1+\z){\footnotesize$\e{17684523 }$};
\node[vertex] (v8) at (2+\y,1+\z){\footnotesize$\e{71684523 }$};
\node[vertex] (v9) at (1,2){\footnotesize$\e{61782543 }$};
\node[vertex] (v10) at (2,2){\footnotesize$\e{67182543 }$};
\node[vertex] (v11) at (1,1){\footnotesize$\e{17682543 }$};
\node[vertex] (v12) at (2,1){\footnotesize$\e{71682543 }$};
\node[vertex] (v13) at (1+\y,2){\footnotesize$\e{61785243 }$};
\node[vertex] (v14) at (2+\y,2){\footnotesize$\e{67185243 }$};
\node[vertex] (v15) at (1+\y,1){\footnotesize$\e{17685243 }$};
\node[vertex] (v16) at (2+\y,1){\footnotesize$\e{71685243 }$};
\draw[edge] (v1) to node[LabelStyleH]{\Tiny$2 $} (v2);
\draw[edge] (v3) to node[LabelStyleH]{\Tiny$2 $} (v4);
\draw[thickedge] (v1) to node[LabelStyleV]{\Tiny$\tilde{2} $} (v3);
\draw[thickedge] (v2) to node[LabelStyleV]{\Tiny$\tilde{2} $} (v4);
\draw[edge] (v5) to node[LabelStyleH]{\Tiny$2 $} (v6);
\draw[edge] (v7) to node[LabelStyleH]{\Tiny$2 $} (v8);
\draw[thickedge] (v5) to node[LabelStyleV]{\Tiny$\tilde{2} $} (v7);
\draw[thickedge] (v6) to node[LabelStyleV]{\Tiny$\tilde{2} $} (v8);
\draw[thickedge] (v9) to node[LabelStyleH]{\Tiny$2 $} (v10);
\draw[thickedge] (v11) to node[LabelStyleH]{\Tiny$2 $} (v12);
\draw[edge] (v9) to node[LabelStyleV]{\Tiny$\tilde{2} $} (v11);
\draw[edge] (v10) to node[LabelStyleV]{\Tiny$\tilde{2} $} (v12);
\draw[edge] (v13) to node[LabelStyleH]{\Tiny$2 $} (v14);
\draw[edge] (v15) to node[LabelStyleH]{\Tiny$2 $} (v16);
\draw[edge] (v13) to node[LabelStyleV]{\Tiny$\tilde{2} $} (v15);
\draw[edge] (v14) to node[LabelStyleV]{\Tiny$\tilde{2} $} (v16);
\draw[thickedge, bend left=40] (v1) to node[LabelStyleH]{\Tiny$6 $} (v5);
\draw[thickedge, bend left=40] (v9) to node[LabelStyleH]{\Tiny$6 $} (v13);
\draw[edge, bend left=-25] (v1) to node[LabelStyleV]{\Tiny$\tilde{6} $} (v9);
\draw[edge, bend left=-25] (v5) to node[LabelStyleV]{\Tiny$\tilde{6} $} (v13);
}
}
\end{tikzpicture}
\caption{\label{f pattern}The vertices and some of the edges potentially involved in a KR$_{2,6}$ hypercube.}
\end{figure}

If  $H$ is a partial  D$_0$~graph containing all the vertices in Figure \ref{f pattern} and only the bold edges, then
\[
\text{the pattern of }\,
\begin{array}{cc|cc}
\e{[167]84253} & \e{[167]84523} & \e{6178[245]3} & \e{6718[245]3}    \\
\e{[167]82543} & \e{[167]85243} & \e{1768[245]3} & \e{7168[245]3}
\end{array}
\text{\, is \ \, }
\matdnobrac{
\text{R} & \text{R} & \text{K} & 0    \\
\text{K} & 0        & 0 & 0
}.
\]
For a partial  D$_0$~graph containing only the eight vertices on the left of the figure, and the four bold edges between these vertices together with the four rotation $6$-edges between these vertices, the pattern is
\ $\matdnobrac{
\text{R} & \varnothing & \text{R} & \text{R}    \\
\text{K} & \varnothing & \text{R} & \text{R}
}.$
\end{example}

A \emph{partial pattern} is a $2\times 4$-matrix as in \eqref{e pattern} whose entries are either unoccupied or one of the types $\varnothing$, 0, K, or R.
A pattern $t$ \emph{contains} a partial pattern $\hat{t}$ if $t$ and $\hat{t}$ agree on the occupied entries of $\hat{t}$.

If $T$ and $T'$ are sets of $2\times 2$ matrices, then let $T\dot{\times} T'$ denote the set of $2\times 4$ matrices
\[\{
\left[\begin{array}{@{}c|c@{}}
t & t'\end{array}\right]
 \mid t \in T, ~ t' \in T'\}. \]
For example, if
\[
T = \left\{
\matb{
\text{K} &      \\
\text{R} &
},
\matb{
 & \text{K}     \\
 & \text{R}
}\right\}
, \quad T' = \left\{
\matb{
\text{R} &      \\
  &
}
,
\matb{
 & \text{R}     \\
 &
}\right\}
,
\text{ then }
\]
\[T \dot{\times} T' = \left\{
\matd{
\text{K} &  & \text{R}&     \\
\text{R} &  &  &
},
\matd{
\text{K} &  &  &  \text{R}   \\
\text{R} &  &  &
},
\matd{
 & \text{K} & \text{R} &     \\
 & \text{R} & &
},
\matd{
 & \text{K}&  & \text{R}    \\
 & \text{R}&  &
}
\right\}.\]
It is convenient to reformulate axiom 5 in terms of KR square types, with no direct mention of edges.
\begin{proposition} \label{p axiom5 patterns}
The following properties of a D$_0$~graph $\G$ are equivalent:
\begin{list}{\emph{(\roman{ctr})}}{\usecounter{ctr} \setlength{\itemsep}{2pt} \setlength{\topsep}{3pt}}
\item $\G$ satisfies axiom 5.
\item For every  $|j-i| \geq 3$ and $i$-edge $\{\e{v},\e{w}\}$ such that $\e{v}$ admits a $j$-neighbor, the type of the  $j$-edge at $\e{v}$ is the same as the type of the $j$-edge at $\e{w}$.
\item For every  $|j-i| \geq 3$ and $i$-edge $\{\e{v},\e{w}\}$ such that $\e{v}$ admits a $j$-neighbor, the type of the KR$_j$ square containing $\e{v}$ is the same as the type of the KR$_j$ square containing $\e{w}$.
\item For every $j-i \geq 3$, the pattern of any KR$_{i,j}$ hypercube of $\G$ does not contain any of the partial patterns in the following set of size 64:
\begin{align}
 &
\left\{
\matb{
\text{\emph K} &      \\
  &
},
\matb{
  & \text{\emph K}     \\
  &
},
\matb{
  &      \\
\text{\emph K} &
},
\matb{
  &       \\
  & \text{\emph K}
}
\right\}
\dot{\times}
\left\{
\matb{
\text{\emph K} & \text{\emph R}      \\
  &
},
\matb{
  &      \\
\text{\emph K} & \text{\emph R}
},
\matb{
\text{\emph R} & \text{\emph K}     \\
  &
},
\matb{
  &       \\
\text{\emph R} & \text{\emph K}
}
\right\}  \label{e partial1}\\
 \bigsqcup
& \left\{
\matb{
\text{\emph R} &      \\
  &
},
\matb{
  & \text{\emph R}     \\
  &
},
\matb{
  &      \\
\text{\emph R} &
},
\matb{
  &       \\
  & \text{\emph R}
}
\right\} \dot{\times}
\left\{
\matb{
\text{\emph K} &      \\
\text{\emph R} &
},
\matb{
  & \text{\emph K}     \\
  & \text{\emph R}
},
\matb{
\text{\emph R} &      \\
\text{\emph K} &
},
\matb{
  & \text{\emph R}     \\
  & \text{\emph K}
}
\right\}
\label{e partial2}\\
 \bigsqcup
&\left\{
\matb{
\text{\emph K} & \text{\emph R}      \\
  &
},
\matb{
  &      \\
\text{\emph K} & \text{\emph R}
},
\matb{
\text{\emph R} & \text{\emph K}     \\
  &
},
\matb{
  &       \\
\text{\emph R} & \text{\emph K}
}
\right\} \dot{\times}
\left\{
\matb{
\text{\emph K} &      \\
  &
},
\matb{
  & \text{\emph K}     \\
  &
},
\matb{
  &      \\
\text{\emph K} &
},
\matb{
  &       \\
  & \text{\emph K}
}
\right\}
\label{e partial3}\\
\bigsqcup &\left\{
\matb{
\text{\emph K} &      \\
\text{\emph R} &
},
\matb{
  & \text{\emph K}     \\
  & \text{\emph R}
},
\matb{
\text{\emph R} &      \\
\text{\emph K} &
},
\matb{
  & \text{\emph R}     \\
  & \text{\emph K}
}
\right\} \dot{\times}
\left\{
\matb{
\text{\emph R} &      \\
  &
},
\matb{
  & \text{\emph R}     \\
  &
},
\matb{
  &      \\
\text{\emph R} &
},
\matb{
  &       \\
  & \text{\emph R}
}
\right\}.\label{e partial4}
\end{align}
\item For every $j- i \geq 3$, the pattern of any KR$_{i,j}$ hypercube of $\G$ appears below or is obtained from one of these by replacing some of the types K or R by $\varnothing$.
\begin{align*}
&\matdnobrac{
\text{\emph K} & \text{\emph K} & \text{\emph K} & \text{\emph K}    \\
\text{\emph K} & \text{\emph K} & \text{\emph K} & \text{\emph K}
} \quad \ \
\matdnobrac{
\text{\emph K} & \text{\emph K} & \text{\emph K} & \text{\emph K}    \\
\text{\emph K} & \text{\emph K} & \text{\emph R} & \text{\emph R}
}\quad \ \
\matdnobrac{
\text{\emph K} & \text{\emph K} & \text{\emph R} & \text{\emph R}    \\
\text{\emph K} & \text{\emph K} & \text{\emph K} & \text{\emph K}
}\quad \ \
\matdnobrac{
\text{\emph K} & \text{\emph K} & \text{\emph R} & \text{\emph R}    \\
\text{\emph K} & \text{\emph K} & \text{\emph R} & \text{\emph R}
}\\[2mm]
&\matdnobrac{
\text{\emph K} & \text{\emph K} & \text{\emph K} & \text{\emph K}    \\
\text{\emph R} & \text{\emph R} & \text{\emph K} & \text{\emph K}
}\quad \ \
\matdnobrac{
\text{\emph R} & \text{\emph R} & \text{\emph K} & \text{\emph K}    \\
\text{\emph K} & \text{\emph K} & \text{\emph K} & \text{\emph K}
}\quad \ \
\matdnobrac{
\text{\emph K} & \text{\emph R} & \text{\emph R} & \text{\emph R}    \\
\text{\emph K} & \text{\emph R} & \text{\emph R} & \text{\emph R}
} \quad \ \
\matdnobrac{
\text{\emph R} & \text{\emph K} & \text{\emph R} & \text{\emph R}    \\
\text{\emph R} & \text{\emph K} & \text{\emph R} & \text{\emph R}
}\\[2mm]
&\matdnobrac{
\text{\emph R} & \text{\emph R} & \text{\emph K} & \text{\emph K}   \\
\text{\emph R} & \text{\emph R} & \text{\emph K} & \text{\emph K}
}\quad \ \
\matdnobrac{
\text{\emph R} & \text{\emph R} & \text{\emph K} & \text{\emph R}    \\
\text{\emph R} & \text{\emph R} & \text{\emph K} & \text{\emph R}
} \quad \ \
\matdnobrac{
\text{\emph R} & \text{\emph R} & \text{\emph R} & \text{\emph K}    \\
\text{\emph R} & \text{\emph R} & \text{\emph R} & \text{\emph K}
} \quad \ \
\matdnobrac{
\text{\emph R} & \text{\emph R} & \text{\emph R} & \text{\emph R}    \\
\text{\emph R} & \text{\emph R} & \text{\emph R} & \text{\emph R}
}
\end{align*}
\end{list}
\end{proposition}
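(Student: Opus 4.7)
The plan is to prove the cycle of pairwise equivalences (i) $\iff$ (ii) $\iff$ (iii) $\iff$ (iv) $\iff$ (v), with the first two being direct unwindings of definitions and the last two requiring careful combinatorial bookkeeping within a KR$_{i,j}$ hypercube.

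First I would prove (i) $\iff$ (ii). Since $|j-i| \geq 3$, the positions $\{i-1,i,i+1\}$ and $\{j-1,j,j+1\}$ affected by the $i$- and $j$-transformations are disjoint, so the two transformations commute on every word. Hence if $\{\e{v},\e{w}\}$ is an $i$-edge and $\e{v}$ admits a $j$-neighbor via a Knuth (resp.\ rotation) $j$-transformation, then applying the same $j$-transformation to $\e{w}$ produces a $j$-edge of the same type at $\e{w}$, closing the $i$-$j$ square demanded by axiom 5; conversely, any closing vertex posited by axiom 5 must arise in this way, since $j$-neighbors are unique. The equivalence (ii) $\iff$ (iii) is immediate: for any vertex $\e{v}$ admitting an $i$-neighbor, the type of the $i$-edge at $\e{v}$ coincides with the type of the unique KR$_i$ square containing $\e{v}$.

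For (iii) $\iff$ (iv), I would carefully trace how Knuth and rotation $i$-edges propagate across a KR$_{i,j}$ hypercube. With the labelling of \eqref{e hypercube}, a vertex with $i$-substring $A$ and $j$-substring $B$ lies in the KR$_i$ square indexed by $B$ on the left of the hypercube and in the KR$_j$ square indexed by $A$ on the right. Consequently, the two Knuth $i$-edges within any KR$_i$ square connect right-block positions $(1,1)$--$(1,2)$ and $(2,1)$--$(2,2)$, while the two rotation $i$-edges connect $(1,1)$--$(2,1)$ and $(1,2)$--$(2,2)$. Thus under (iii), a K at any left position forces both rows of the right-block type matrix $T^j$ to be uniform, and an R at any left position forces both columns of $T^j$ to be uniform; symmetric statements hold for K and R on the right. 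The 64 forbidden partial patterns in \eqref{e partial1}--\eqref{e partial4} enumerate precisely the four families of configurations in which one of these forced equalities fails (four positions for the singleton K or R, times four row- or column-inconsistencies, in each of four symmetric cases).

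Finally, (iv) $\iff$ (v) is proved by case analysis on the restriction of the hypercube pattern to its non-$\varnothing$ entries. Restricted to full patterns with entries in $\{\text{K},\text{R}\}$, the uniformity constraints from Step 3 force either $T^i$ to be all K (with 4 allowed $T^j$, namely those whose rows are each uniform), $T^i$ to be all R (with 4 allowed $T^j$, whose columns are each uniform), or $T^i$ to be one of the four mixed configurations $\matb{\text{K}&\text{K}\\\text{R}&\text{R}}$, $\matb{\text{R}&\text{R}\\\text{K}&\text{K}}$, $\matb{\text{K}&\text{R}\\\text{K}&\text{R}}$, $\matb{\text{R}&\text{K}\\\text{R}&\text{K}}$ (each of which, by the dual constraints, forces $T^j$ constant and uniquely determined); this enumeration yields exactly the 12 patterns listed in (v). To handle hypercube patterns with $\varnothing$ entries, one uses Proposition-Definition \ref{pd KR set}, which constrains the intersection of any KR square with $\ver(\G)$ to have size $0$, $2$, or $4$; this forces the distribution of non-$\varnothing$ entries in any valid hypercube pattern of $\G$ to be compatible with exactly one of the 12 listed full patterns up to $\varnothing$-degeneration. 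The main obstacle is Step 3, which requires careful bookkeeping of the 16 vertices of the hypercube and their distribution among KR$_i$ and KR$_j$ squares so that the four families of 64 forbidden partial patterns emerge as the precise set of violations; the case analysis for Step 4 is then combinatorial but straightforward.
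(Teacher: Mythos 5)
Your overall strategy mirrors the paper's---prove the chain (i) $\iff$ (ii) $\iff$ (iii) $\iff$ (iv) $\iff$ (v), with (i)--(iii) following from the definitions and the rest from a combinatorial analysis of KR$_{i,j}$ hypercubes---and your handling of (i)--(iii) is fine. The gap is in Step 3. You assert: ``under (iii), a K at any left position forces \emph{both} rows of the right-block type matrix $T^j$ to be uniform,'' justified by ``the two Knuth $i$-edges within any KR$_i$ square connect right-block positions $(1,1)$--$(1,2)$ and $(2,1)$--$(2,2)$.'' This reasoning tacitly assumes that a KR$_i$ square of type K contains \emph{all four} of its vertices in $\G$, so that both Knuth $i$-edges are present. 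But a KR square of type K may intersect $\ver(\G)$ in only two words (see Definition~\ref{d D0 graph}), in which case only one Knuth $i$-edge exists, and (iii) applied to it forces only \emph{one} of the two rows of $T^j$ to be uniform. In fact, the literal claim ``both rows uniform'' is false: the pattern
$\matdnobrac{\text{K} & \text{K} & \text{K} & \text{K}    \\ \text{R} & \text{R} & \text{K} & \varnothing}$
is realizable and satisfies (iii), yet $t^j_3 = \text{K} \neq \varnothing = t^j_4$. What is true is the weaker statement that no row (resp.\ column) of $T^j$ can be a $\{\text{K},\text{R}\}$ pair, but deriving this from (iii) requires a genuinely two-sided argument that your proposal omits. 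The paper's proof first establishes the auxiliary claim \eqref{e same t}---a constraint on each $2\times 2$ half of the pattern \emph{separately}, obtained by applying (iii) to the $j$-edges at the two endpoints of an $i$-edge (the opposite direction from your propagation)---and then feeds it into a careful case analysis, using the fact that a type-K or type-R square forces at least two of its vertices (not necessarily all four) into $\G$. Step 4 inherits this gap, since it invokes ``the uniformity constraints from Step 3'' directly, and its reduction of $\varnothing$-degenerations via Proposition-Definition~\ref{pd KR set} is asserted rather than argued; the paper instead derives the 12 allowed patterns from \eqref{e same t} and the forbidden-pattern list (iv) by explicit cases. You need the intermediate claim \eqref{e same t} (or an equivalent) and must handle partial squares explicitly before your enumeration can go through.
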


\begin{proof}
The equivalence of (i), (ii), and (iii) follows from the fact that in a D$_0$~graph, the type of an $i$-edge is equal to the type of the KR$_i$ square containing it.

To prove (iv) implies (iii), suppose  $\{\e{v},\e{w}\}$ is an  $i$-edge such that $\e{v}$ admits a $j$-neighbor.
Assume  $j > i$, the case  $j < i$ being similar.
Let
\[ t =
\matdnobrac{
t_1^i & t_2^i & t_1^j & t_2^j    \\
t_3^i & t_4^i & t_3^j & t_4^j
}
\]
be the pattern of the KR$_{i,j}$ hypercube of $\G$ that contains the KR$_i$ square containing $\e{v}$ and $\e{w}$.
Note that the KR$_j$ squares containing $\e{v}$ and $\e{w}$ cannot have type $\varnothing$ or 0.
Hence if $\{\e{v},\e{w}\}$ has type K (resp. R), then  $t$ not containing any of the partial patterns in \eqref{e partial1} (resp. \eqref{e partial2}) implies $t_1^j = t_2^j$ and $t_3^j = t_4^j$ (resp. $t_1^j = t_3^j$ and $t_2^j = t_4^j$).
It follows that the type of the KR$_j$ square containing $\e{v}$ is the same as the type of the KR$_j$ square containing $\e{w}$.

To help prove (iii) implies (iv), we
first claim that (iii) implies
\begin{equation}
\label{e same t}
\parbox{14cm}{the left $2\times 2$ submatrix and the right $2\times 2$ submatrix in the pattern of any  KR$_{i,j}$ hypercube of $\G$ does not contain
$\matb{\text{R} & \text{K} \\ \text{K} & }$ and $\matb{\text{K} & \text{R} \\ \text{R} &}$ and their rotations.
}
\end{equation}
To prove this claim, consider a KR$_{i,j}$ hypercube of  $\G$ as in \eqref{e hypercube} with pattern  $t$ as in \eqref{e pattern}.  Since $\G$ is a D$_0$~graph,
if $t_1^i \in \{\text{K}, \text{R}\},$ then at least two of the vertices in  $\e{x\stars yedfz}$ are vertices of  $\G$ and admit  $j$-neighbors.
So (iii) implies that at least one of $t_2^i, t_3^i$ is the same as $t_1^i$.  This proves that the the left  $2 \times 2$ submatrix of $t$ does not
contain $\matb{\text{R} & \text{K} \\ \text{K} & }$ and $\matb{\text{K} & \text{R} \\ \text{R} &}$. The other conclusions of the claim are proved in a similar way.

Now assume (iii) holds.
Suppose for a contradiction that (iv) does not hold, i.e.
there is a pattern
\[ t =
\matdnobrac{
t_1^i & t_2^i & t_1^j & t_2^j    \\
t_3^i & t_4^i & t_3^j & t_4^j
}
\]
of a KR$_{i,j}$ hypercube of $\G$ as in \eqref{e hypercube} such that
 $t$ contains one the partial patterns in (iv).
All 64 possibilities are handled in a similar way, so assume
$t_1^i = \text{R}$, $t_2^j = \text{R},$ $t_4^j = \text{K}$.
By \eqref{e same t}, we must have $t_1^j \in \{\text{R}, \varnothing\}$ and $t_3^j \in \{\text{K}, \varnothing\}$.
Since $t_1^i = \text{R}$, at least one of the sets $\{\e{xbacyedfz, xacbyedfz}\}$, $\{\e{xbcayedfz, xcabyedfz}\}$ is an $i$-edge of $\G$.
But by (iii), the former being an $i$-edge implies $t_1^j = t_3^j \in \{\text{K}, \text{R}\}$ and the latter being an $i$-edge implies $t_2^j = t_4^j \in \{\text{K}, \text{R}\}$, contradiction.

To prove that (v) implies (iv), it suffices to check each of the 12 patterns appearing in (v) does not contain any of the partial patterns in (iv), which is straightforward.

We now prove (iv) implies (v). Let $t$ be the pattern of any KR$_{i,j}$ hypercube of $\G$.
Let $t^i$ (resp. $t^j$) be the left (resp. right) $2\times 2$ submatrix of $t$.
Note that (iv) implies \eqref{e same t} since (iv) implies (iii).
It follows that
\begin{equation}
\label{e left right forced}
\parbox{14.5cm}{$t^i$ (resp. $t^j$) belongs to
\[
\matb{\text{K} & \text{K} \\ \text{K} & \text{K} } \ \ \matb{\text{K} & \text{K} \\ \text{R} & \text{R} } \ \ \matb{\text{R} & \text{R} \\ \text{K} & \text{K} } \ \
\matb{\text{K} & \text{R} \\ \text{K} & \text{R} } \ \ \matb{\text{R} & \text{K} \\ \text{R} & \text{K} } \ \ \matb{\text{R} & \text{R} \\ \text{R} & \text{R} },
\]
or is obtained from one of these by replacing some of the types K or R by $\varnothing$.}
\end{equation}
If $t^i$ contains $\matb{\text{K} & \text{R} \\ &}$, then by (iv), line \eqref{e partial3}, $t^j$ equals $\matb{\text{R} & \text{R} \\ \text{R} & \text{R}}$ or is obtained from this by replacing some of the types by $\varnothing$. It follows from \eqref{e left right forced} that
$t = \matd{\text{K} & \text{R} & \text{R} & \text{R} \\ \text{K} & \text{R} & \text{R} & \text{R}}$
or is obtained from this by replacing some of the types by $\varnothing$.
This pattern belongs to the list in (v), as desired.
If $t^i$ contains a rotation of $\matb{\text{K} & \text{R} \\ &}$ or $\matb{\text{R} & \text{K} \\ &}$, then a similar argument applies.
If $t^i$ contains only K or $\varnothing$, then by \eqref{e left right forced} and (iv) (line \eqref{e partial1}), $t$ is equal to one of the patterns in the top row of the list in (v) or is obtained from one of these by replacing some of the types by $\varnothing$.
The case that $t^i$ contains only R or $\varnothing$ is similar.
This exhausts all possibilities for  $t^i$ because by the definition of a D$_0$~graph, if $t$ does not consist entirely of $\varnothing$'s, then $t^i$ contains two entries that are not $\varnothing$ in the same row or column.
\end{proof}

Say that a pattern $t'$ of a KR$_{i,j}$ hypercube  $Z$ \emph{covers} another pattern  $t$ of  $Z$
if  $t'$ is obtained from  $t$ by replacing some of its 0's and $\varnothing$'s  by K or R.

\begin{myalgorithm}
\label{a forceaxiom5 advanced}
The functions below take as input a partial D$_0$~graph $H$ and output a partial D$_0$~graph containing $H$ or FAILURE.\vspace{1mm} \;

\begin{algorithm}[H]
\DontPrintSemicolon
\SetKwProg{Fn}{function}{}{end function}
\SetAlgoNoLine
\Fn{\emph{AdvancedForceAxiom5Step}$(H)$}{
For each $j-i \ge 3$ and  KR$_{i,j}$ hypercube of  $H$ with pattern
\[ t =
\begin{array}{cc|cc}
t_{11} & t_{12} & t_{13} & t_{14}    \\
t_{21} & t_{22} & t_{23} & t_{24}
\end{array},
\]
and for each $t_{rc}$ equal to 0, set $t_{rc} = \text{K}$ (resp.  $t_{rc} = \text{R}$) and add the two corresponding edges to $H$
if each of the 12 patterns of Proposition \ref{p axiom5 patterns} (v) that covers $t$ has $r,c$-entry equal to K (resp. R). Let $H'$ denote the resulting graph. If none of the 12 patterns covers $t$, return FAILURE, otherwise return $H'$.\;
}
\end{algorithm}
\begin{algorithm}[H]
\DontPrintSemicolon
\SetKwProg{Fn}{function}{}{end function}
\SetAlgoNoLine
\Fn{\emph{AdvancedForceAxiom5}$(H)$}{
\Repeat{$H' = H$ or  $H =$ FAILURE}{
\text{$H' := H$}\;
$H$ := AdvancedForceAxiom5Step($H$)\;
}
\Return $H$ \;
}
\end{algorithm}
\end{myalgorithm}

\vspace{-5pt}
To illustrate AdvancedForceAxiom5Step, the pattern
$t = ~
\matdnobrac{
\text{R} & 0 & 0 & 0    \\
\text{K} & 0 & 0 & 0
}~$
can be completed to
$~
\matdnobrac{
\text{R} & \text{R} & \text{K} & \text{K}    \\
\text{K} & \text{K} & \text{K} & \text{K}
}~~
$
since this is the only one of the 12 patterns covering $t$.

The following proposition is immediate from Proposition \ref{p axiom5 patterns}.
\begin{proposition}
If AdvancedForceAxiom5 outputs a  D$_0$~graph, then this graph satisfies axiom 5.
If AdvancedForceAxiom5$(H)$ outputs FAILURE, then there is no  D$_5$ graph containing $H$.
\end{proposition}

Recall that in Example \ref{ex triple biject} we defined triples bijectivizations of~\,$\U/\Irkst$. Also of interest are graphs associated to these bijectivizations. These will be discussed in the next example and Section \ref{s Concluding remarks}.
\begin{definition}
\label{d triple D0}
For each assignment of Knuth or rotation to the subsets of size 3 of $[N]$ (as in Example \ref{ex triple biject}), define a D$_0$~graph on the vertex set $\W$ by requiring:
\[\text{the type of the KR square $\e{v\stars w}$ is }
\begin{cases}
\text{Knuth} & \text{if $\{\e{a,b,c}\}$ is a Knuth triple,}\\
\text{rotation} & \text{if $\{\e{a,b,c}\}$ is a rotation triple.}
\end{cases} \]
We say that a \emph{triples D$_0$~graph} is a union of connected components of such a graph.
\end{definition}

\begin{example}
To better understand the subset of D$_5$ graphs inside the set of all D$_0$~graphs, the following enumerative results are instructive.
\[
\begin{array}{l|l}
\text{Number of KR squares of $\S_n$} & \frac{(n-2)n!}{3!}\\[2mm]
\text{Number of D$_0$~graphs on $\S_n$} & 2^{\frac{(n-2)n!}{3!}}\\
\text{Number of triples D$_0$~graphs on $\S_n$} & 2^{\binom{n}{3} }\\[1mm]
\text{Number of D$_0$~graphs on $\S_6$} & 2^{24 \cdot 20}\\[1mm]
\text{Number of D$_5$ graphs on $\S_6$} & 2^{16\cdot 20}\cdot 12^{20}
\end{array}
\]

The first line comes from the fact that KR${}_i$ squares of  $\S_n$ are in bijection with words of length $n-3$ having no repeated letter, in the alphabet $\e{1,2,\dots,n}$.
A D$_0$~graph on  $\S_n$ is determined by choosing the type of each KR square of  $\S_n$ independently, hence the second line.
The number of triples D$_0$~graphs on  $\S_n$ is clear.

The number of D$_5$ graphs on $\S_n$ seems difficult to compute for $n > 6$, however the $n=6$ case is particularly simple because each KR square is contained in at most one
KR$_{i,j}$ hypercube.
There are $24\cdot 20$ KR squares of  $\S_6$. Axiom 5 imposes no conditions on the KR${}_3$ and KR${}_4$ squares. The same goes for the KR${}_2$ squares of the form $\e{\stars w}$, where $\e{w}$ is an increasing or decreasing word and for the KR${}_5$ squares of the form $\e{v\stars }$, where $\e{v}$ is an increasing or decreasing word.
This is a total of $16 \cdot 20$ KR squares.
There are $\binom{6}{3} = 20$  KR$_{2,5}$ hypercubes of $\S_6$
corresponding to the subsets of  $\{\e{1,2,3,4,5,6}\}$ of size 3, and these partition the
remaining $8\cdot 20$ KR squares into 20 sets of size 8.
By Proposition \ref{p axiom5 patterns} (v), there are 12 possibilities for the patterns of each of these hypercubes.
\end{example}

\subsection{Forcing axioms $4'b$ and 5}
\label{ss forcing 45}
We sketch the algorithm used to find a D~graph whose generating function is not Schur positive.

Let  $H$ be a partial  D$_0$~graph.
For each  $4 \le i \le n-2$, define $B_i(H) \subseteq \ver(H)$ to be the set of $\e{w} \in \ver(H)$ such that
\begin{itemize}
\item $E_{i-2}E_iE_{i-2}(\e{w}), E_iE_{i-2}(\e{w}),E_{i-2}(\e{w}), \e{w}, E_i(\e{w}), E_{i-2}E_i(\e{w})$ are well defined and distinct,
\item $\e{w}$ has $(i+1)$-type  W and  $E_{i-2}(\e{w})$ does not have  $(i+1)$-type W.
\end{itemize}
Define the following statistic on partial D$_0$~graphs $H$ taking values in  $\NN \sqcup \{\infty\}$:
\[\statb(H) :=
\begin{cases}
\sum_{i=4}^{n-2} |B_i(H)| &  \text{if $H$ satisfies axiom $4'b$,}\\
\infty &  \text{otherwise.}
\end{cases}
 \]

Given a partial D$_0$~graph $H$, let $\Q(H)$ be the graph with
\begin{itemize}
\item vertex set the undetermined KR squares of  $H$,
\item an edge for every pair of undetermined KR squares that appear together in a KR$_{i,j}$ hypercube.
\end{itemize}

The space of all  D$_5$ graphs containing a given partial  D$_0$~graph  $H$ is much too big to explore completely for one that satisfies axiom $4'b$.
To explore this space intelligently, we test the possibilities K or R for the undetermined KR squares of  $H$ while not violating axioms  $4'b$ and 5.  We work on one component of  $\Q(H)$ at a time, and for small components, we explore both possibilities K or R for the undetermined KR squares, using the statistic  $\statb$ to decide which ones to keep.
For concreteness, let us say that a subgraph of  $\Q(H)$ is  \emph{small} if it has $\leq 10$ vertices (see Remark \ref{r heuristics}).

\begin{myalgorithm}
\label{a force4'b etc}
In the functions below, the input $H$ is a partial  D$_0$~graph, $C$ and $\Q$ are induced subgraphs of the graph  $\Q(H)$, and $C$ is assumed to be connected.
Both functions return a partial D$_0$~graph containing  $H$ together with a statistic in $\NN \sqcup \{\infty\}$.
Moreover, if GrowDGraphOneQComponent (resp. GrowDgraph) returns  $\G, z$ with $z < \infty$, then  $\G$ is a partial D$_0$~graph that satisfies axioms  $4'b$ and  5 and has no undetermined KR squares belonging to the vertex set of $C$ (resp.  $\Q$).
In particular, if GrowDgraph($H, \Q(H)$) returns $\G, z$ with $z < \infty$, then  $\G$ is a D$_0$~graph and a D~graph.
The function GrowDGraphOneQComponent calls AdvancedForceAxiom5 from Algorithm \ref{a forceaxiom5 advanced}.

{\small
\begin{algorithm}
\DontPrintSemicolon
\RestyleAlgo{boxed}
\SetKwProg{Fn}{function}{}{end function}
\SetAlgoNoLine
\Fn{\emph{GrowDGraphOneQComponent}$(H, C)$}{
Choose a vertex  $X$ of  $C$, which is an undetermined KR square of  $H$.\;
Let $H^\text{K}$ (resp. $H^\text{R}$) be the graph obtained from  $H$ by setting the type of $X$ to be Knuth (resp. rotation) and adding the two corresponding edges to $H$.\;
$G^\text{K} := \text{AdvancedForceAxiom5}(H^\text{K})$\;
\If{$G^\text{K} =$ FAILURE}{
    \Return $H,\, \infty$ \;
}
Let $\Q^\text{K}$ be the graph obtained from $C$ by removing all the vertices which are no longer undetermined KR squares in $G^\text{K}$ ($\Q^\text{K}$ is an induced subgraph
of  $\Q(G^\text{K})$). \;
$G^\text{K},\, \stat^\text{K} := \text{GrowDGraph}(G^\text{K}, \Q^\text{K})$ \;
\eIf{$C$ is small}{
    $G^\text{R} := \text{AdvancedForceAxiom5}(H^\text{R})$\;
    \If{$G^\text{R} =$ FAILURE}{
        \Return $H,\, \infty$ \;
    }
    Let $\Q^\text{R}$ be the graph obtained from $C$ by removing all the vertices which are no longer undetermined KR squares in $G^\text{R}$. \;
    $G^\text{R},\, \stat^\text{R} := \text{GrowDGraph}(G^\text{R}, \Q^\text{R})$ \;
    \eIf{$\stat^\text{K} \leq \stat^\text{R}$}{
        \Return  $G^\text{K},\, \stat^\text{K}$\;}{
        \Return $G^\text{R},\, \stat^\text{R}$ \;}
}{
    \eIf{$\stat^\text{K} < \infty$}{
        \Return $G^\text{K},\, \stat^\text{K}$ \;
    }{
    \Return GrowDGraph$(G^\text{R}, \Q^\text{R})$ \;
    }
}
}
\end{algorithm} }

{\small
\begin{algorithm}
\DontPrintSemicolon
\RestyleAlgo{boxed}
\SetKwProg{Fn}{function}{}{end function}
\SetAlgoNoLine
\Fn{\emph{GrowDGraph}$(H, \Q)$}{
Let  $C_1, \dots, C_t$ be the components of  $\Q$, in increasing order of size ($\Q$ may be empty in which case $t := 0$).\;
\For{$i = 1\text{ to } t$}{
  $H,\, \stat := $ GrowDGraphOneQComponent$(H, C_i)$\;
   \If{$\stat = \infty$}{
    \Return $H,\, \infty$ \;
    }
}
    \Return $H,\, \statb(H)$ \;
}
\end{algorithm} }
\end{myalgorithm}

\begin{remark}\label{r heuristics}
In practice, we used complicated heuristics to determine the condition that $C$ is small and to choose a vertex of $C$ in the algorithm above.
Also, we used a slightly simpler and faster version of AdvancedForceAxiom5.
\end{remark}

\begin{remark}\label{r fancy KR set to D0}
Instead of calling GrowDGraph with the minimal partial D$_0$~graph  $H^*$ on  $\bar{W}^*$, it is better to call it with a smaller partial  D$_0$~graph constructed as follows:
let  $H'$ be the  D$_0$~graph obtained by setting
each undetermined KR square of  $H^*$ to type Knuth.
Let  $V$ be the vertex set of the largest component of $H'$ (there is a unique largest component); the size of  $V$ is 6026, whereas the size of  $\bar{W}^*$ is 39696.
Let $H^*_{6026}$ be the minimal partial  D$_0$-graph on  $V$ (by Proposition \ref{p component KR set} (i) and Proposition-Definition \ref{pd KR set}, $V$ is a KR set).
The generating function of this graph is not Schur positive.
\end{remark}

With $H^*_{6026}$ as in Remark \ref{r fancy KR set to D0}, the function call
GrowDGraph$(H^*_{6026}, \Q(H^*_{6026}))$, or rather its modification with the heuristics and caveats of Remark \ref{r heuristics},
outputs a D~graph.
The largest component  $\G^*$ of this graph
is a D~graph whose generating function is not Schur positive.
The graph  $\G^*$ is described in the data files accompanying this paper.
See \textsection\ref{ss accompanying data files} for a guide to these files and some basic properties of $\G^*$.


\subsection{A strengthening of axiom $4'b$}
\label{ss axiom 4''b}
We also considered a stronger version of axiom $4'b$, which we call axiom $4''b$.
\begin{definition}
Let $\G$ be a signed, colored graph of degree  $n$ satisfying axiom 0.  For
$4 \le i < n$, a \emph{weak flat $i$-chain} of  $\G$ is a sequence $(x^1, x^2,\ldots, x^{2h-1}, x^{2h})$ of distinct vertices admitting $i-2$-neighbors
such that
\begin{itemize}
\item $x^{2j-1} = E_i(x^{2j})$,
\item $x^{2j+1}$ is any vertex on the $(i-2)$-$(i-1)$-string containing $x^{2j}$
such that $E_{i-2}(x^{2j+1})$ does not have $(i-1)$-type W.
\end{itemize}
\end{definition}

\begin{remark}
The vertices $x^{2j}$ and $x^{2j+1}$ in a weak flat  $i$-chain are related just as in Remark \ref{r flat chain}, except that if $H$ is a path  $(y^1,\dots,y^5)$ with 4 edges as in that remark, then exactly one of the following possibilities must hold:
\begin{itemize}
\item $x^{2j} = y^2$, $x^{2j+1} = y^5$,
\item $x^{2j} = y^4$, $x^{2j+1} = y^5$,
\item $x^{2j} = y^5$, $x^{2j+1} = y^4$,
\item $x^{2j} = y^2$, $x^{2j+1} = y^4$,
\item $x^{2j} = y^3$, $x^{2j+1} = y^4$,
\item $x^{2j} = y^3$, $x^{2j+1} = y^5$.
\end{itemize}
\end{remark}

Axiom $4''b$ is the same as axiom $4'b$ except with weak flat  $i$-chain in place of flat $i$-chain:
\begin{list}{}{\usecounter{ctr} \setlength{\itemsep}{2pt} \setlength{\topsep}{3pt}}
\item[(axiom $4''b$)] if $(x^1,x^2,\ldots,x^{2h-1},x^{2h})$ is a weak flat $i$-chain and $x^j$ has $(i+1)$-type W for some $2 < j < 2h-1$, then either $x^1, \dots, x^j$
have $(i+1)$-type W or  $x^j, \dots, x^{2h}$ have  $(i+1)$-type W (or both).
\end{list}

Any flat $i$-chain is a weak flat $i$-chain, hence any signed, colored graph satisfying axiom $4''b$ also satisfies axiom $4'b$. The D$_0$~graph $\G^*$ constructed in \textsection\ref{ss forcing 45} satisfies axiom $4''b$ in addition to axioms 5 and $4'b$.

\subsection{Accompanying data files}
\label{ss accompanying data files}
The graph  $\G^*$ constructed in \textsection\ref{ss forcing 45} has 4950 vertices and its generating function is
\[6 s_{4 2 1 1}
+ 12 s_{4 1 1 1 1}
+ s_{3 3 2}
+ 3 s_{3 3 1 1}
+ 13 s_{3 2 2 1}
+ 37 s_{3 2 1 1 1}
+ 12 s_{3 1 1 1 1 1}
- s_{2 2 2 2}
+ 8 s_{2 2 2 1 1}
+ 2 s_{2 2 1 1 1 1}.\]
Figure \ref{f flat chain} depicts a flat 4-chain of  $\G^*$ of length 10 and some of its neighboring vertices.

Let $V^*$ be the vertex set of $\G^*$.
The accompanying files present $\G^*$ in several different formats and contain Maple code to check that $\G^*$ satisfies axioms $4'b$ and 5.
\begin{list}{(\alph{ctr})}{\usecounter{ctr} \setlength{\itemsep}{2pt} \setlength{\topsep}{3pt}}
\item {\tt vertexset.txt} gives the vertex set $V^* \subseteq \S_8$ as a list of lists.
\item {\tt involutions.txt} gives the edges of $\G^*$ as involutions $E_{2},\ldots, E_{7}$ of $\{1,\ldots, 4950\} \cong V^*$, where the isomorphism is given by the ordering of vertices in (a).
\item {\tt involutionswithzeros.txt} is the same as (b) except $E_i(\e{v})$ is set to 0 if $\e{v}$ does not admit an $i$-neighbor.
\item {\tt maplegraph.txt} presents $\G^*$ as a Maple graph (no edge colors or types included).
\item {\tt checkaxioms.txt} contains Maple code to check that $\G^*$ is a D$_0$~graph and satisfies axioms $4'b$ and 5. Hence it is a D~graph by Theorem~\ref{t D0 satisfy 123etc}.
We also provide code to verify that the coefficient of  $s_{(2,2,2,2)}(\mathbf{x})$ in the generating function of $\G^*$ is  $-1$. This file contains the data in {\tt vertexset.txt} and {\tt involutionswithzeros.txt}.
\end{list}

\begin{remark}
Checking that $\G^*$ is a D~graph only depends on the code in
{\tt checkaxioms.txt} and the material in Sections~{\ref{s Noncommutative symmetric functions}--\ref{s Basic properties of D0 graphs}}
(more specifically, Theorem~\ref{t D0 satisfy 123etc} and the definition of a D~graph in \textsection\ref{ss Preliminary definitions for D graphs}--\ref{ss D graphs}).  (The material in Sections \ref{s Linear programming and Schur positivity}--\ref{s A D graph whose generating function is not Schur positive} is important for understanding how  $\G^*$ was found, but is not needed for checking it is a D~graph.)
\end{remark}

\begin{figure}
        \centerfloat
\begin{tikzpicture}[xscale = 2.2,yscale = 1.5]
\tikzstyle{vertex}=[inner sep=0pt, outer sep=4pt]
\tikzstyle{framedvertex}=[inner sep=2pt, outer sep=3pt, draw=gray]
\tikzstyle{aedge} = [draw, thin, ->,black]
\tikzstyle{edge} = [draw, thick, -,black]
\tikzstyle{doubleedge} = [draw, thick, double distance=1pt, -,black]
\tikzstyle{hiddenedge} = [draw=none, thick, double distance=1pt, -,black]
\tikzstyle{dashededge} = [draw, very thick, dashed, black]
\tikzstyle{LabelStyleH} = [text=black, anchor=south]
\tikzstyle{LabelStyleHn} = [text=black, anchor=north]
\tikzstyle{LabelStyleD} = [text=black, inner sep=2pt, anchor=south east]
\tikzstyle{LabelStyleDn} = [text=black, inner sep=2pt, anchor=north west]
\tikzstyle{LabelStyleV} = [text=black, anchor=east]
\tikzstyle{LabelStyleVn} = [text=black, anchor=west]

\node[framedvertex] (v4) at (3,3){\footnotesize$\e{18576243} $};
\node[framedvertex] (v5) at (3,4){\footnotesize$\e{18657243} $};
\node[framedvertex] (v11) at (2,4){\footnotesize$\e{81657243} $};
\node[framedvertex] (v12) at (4,3){\footnotesize$\e{81576243} $};
\node[framedvertex] (v15) at (2,5){\footnotesize$\e{81675243} $};
\node[framedvertex] (v18) at (4,2){\footnotesize$\e{81756243} $};
\node[framedvertex] (v25) at (1,5){\footnotesize$\e{68175243} $};
\node[framedvertex] (v29) at (7,2){\footnotesize$\e{78516243} $};
\node[framedvertex] (v35) at (1,6){\footnotesize$\e{68715243} $};
\node[framedvertex] (v37) at (7,1){\footnotesize$\e{78561243} $};

\node[vertex] (v2) at (4,5){\footnotesize$\e{18652743} $};
\node[vertex] (v3) at (3,2){\footnotesize$\e{15876243} $};
\node[vertex] (v8) at (3,5){\footnotesize$\e{81652743} $};
\node[vertex] (v14) at (2,3){\footnotesize$\e{86157243} $};
\node[vertex] (v19) at (1,4){\footnotesize$\e{67815243} $};
\node[vertex] (v21) at (6,2){\footnotesize$\e{75816243} $};
\node[vertex] (v23) at (5,3){\footnotesize$\e{81762543} $};
\node[vertex] (v24) at (5,2){\footnotesize$\e{78156243} $};
\node[vertex] (v30) at (4,1){\footnotesize$\e{87156243} $};
\node[vertex] (v34) at (8,1){\footnotesize$\e{75861243} $};
\node[vertex] (v36) at (0,6){\footnotesize$\e{86715243} $};
\node[vertex] (v38) at (2,6){\footnotesize$\e{68751243} $};
\node[vertex] (v40) at (0,7){\footnotesize$\e{86175243} $};

\draw[edge] (v2) to node[LabelStyleH]{\Tiny$2 $} (v8);
\draw[doubleedge] (v2) to node[LabelStyleD]{\Tiny$5 $} (v5);
\draw[hiddenedge] (v2) to node[LabelStyleDn]{\Tiny$6 $} (v5);
\draw[edge] (v3) to node[LabelStyleV]{\Tiny$3 $} (v4);
\draw[edge] (v4) to node[LabelStyleV]{\Tiny$\tilde{4} $} (v5);
\draw[edge] (v4) to node[LabelStyleH]{\Tiny$2 $} (v12);
\draw[edge] (v5) to node[LabelStyleH]{\Tiny$2 $} (v11);
\draw[doubleedge] (v8) to node[LabelStyleD]{\Tiny$5 $} (v11);
\draw[hiddenedge] (v8) to node[LabelStyleDn]{\Tiny$6 $} (v11);
\draw[edge] (v11) to node[LabelStyleV]{\Tiny$3 $} (v14);
\draw[edge] (v11) to node[LabelStyleV]{\Tiny$4 $} (v15);
\draw[edge] (v12) to node[LabelStyleV]{\Tiny$4 $} (v18);
\draw[edge] (v15) to node[LabelStyleH]{\Tiny$\tilde{2} $} (v25);
\draw[edge] (v18) to node[LabelStyleH]{\Tiny$\tilde{2} $} (v24);
\draw[edge] (v18) to node[LabelStyleV]{\Tiny$3 $} (v30);
\draw[edge] (v18) to node[LabelStyleH]{\Tiny$\tilde{5} $} (v23);
\draw[edge] (v19) to node[LabelStyleV]{\Tiny$\tilde{3} $} (v25);
\draw[edge] (v21) to node[LabelStyleH]{\Tiny$2 $} (v29);
\draw[edge] (v21) to node[LabelStyleH]{\Tiny$\tilde{3} $} (v24);
\draw[edge] (v25) to node[LabelStyleV]{\Tiny$4 $} (v35);
\draw[doubleedge] (v29) to node[LabelStyleV]{\Tiny$4 $} (v37);
\draw[hiddenedge] (v29) to node[LabelStyleVn]{\Tiny$5 $} (v37);
\draw[doubleedge] (v34) to node[LabelStyleH]{\Tiny$2 $} (v37);
\draw[hiddenedge] (v34) to node[LabelStyleHn]{\Tiny$3 $} (v37);
\draw[edge] (v35) to node[LabelStyleH]{\Tiny$5 $} (v38);
\draw[edge] (v35) to node[LabelStyleH]{\Tiny$2 $} (v36);
\draw[edge] (v36) to node[LabelStyleV]{\Tiny$4 $} (v40);
\end{tikzpicture}
\caption{\label{f flat chain} A flat 4-chain of $\G^*$ of length 10. The words that belong to the chain are outlined. All of the 3-neighbors and 5-neighbors of the vertices in the chain are also shown as well as the vertices along the 2-3-strings connecting consecutive vertices in the chain. Only the last two vertices of the chain, $\e{78516243}$ and $\e{78561243}$, have 5-type W, hence this does not violate axiom $4'b$.}
\end{figure}

\section{Concluding remarks}
\label{s Concluding remarks}
The following table summarizes our current knowledge of Schur positivity for the generating functions of various classes of D$_0$~graphs.

\[\begin{array}{ll}
\text{Class of D$_0$~graphs}                          & \text{Schur positive} \\
\hline \text{All D$_0$~graphs}                        & \text{No} \\[.5mm]
\text{D$_0$~graphs satisfying axiom 5}                & \text{No} \\[.5mm]
\text{D$_0$~graphs satisfying axioms $4'b$ and 5}       & \text{No} \\[.5mm]
\text{D$_0$~graphs satisfying axioms $4''b$ and 5}      & \text{No} \\[.5mm]
\text{All triples D$_0$~graphs (Definition \ref{d triple D0})}         & \text{Unknown}\\[.5mm]
\text{The Assaf LLT$_k$ D~graphs (Definition \ref{d Assaf D graph})}    & \text{Unknown}\\[.5mm]
\text{D$_0$~graphs $\G$ such that $\sum_{\e{w} \in \ver(\G)}\e{w} \in (\Ilamst{\le 3})^\perp$} & \text{Yes} \\[.5mm]
\text{The graphs $\G_k^\stand(\bm{\beta}, t)$ (see \textsection\ref{ss Assaf's LLT bijectivizations})} & \text{Yes}\\[.5mm]
\end{array}\]

Here, $\Ilamst{\le k}$ is the two-sided ideal of~\,$\U$ corresponding to the relations \eqref{e Irkst2} together with
\begin{alignat*}{2}
\e{ac} &= \e{ca} \qquad &&\text{for $c-a > k$,} \\
\e{b(ac-ca)} &= \e{(ac-ca)b} \qquad &&\text{for  $a < b < c$ and $c-a \le k$.}
\end{alignat*}
Then $\Ilamst{\le k} \supseteq \Irkst$.
In \cite{BLamLLT}, we show that
 $\mathfrak{J}_\lambda(\mathbf{u})$ is a positive sum of monomials in  $\U/\Ilamst{\le k} $,
which explains the second-to-last entry of the table. Also, the last entry of the table follows from the Schur positivity of LLT polynomials.

Despite the negative results of this paper, there is substantial evidence that many D$_0$~graphs have Schur positive generating functions.
This includes computer experimentation of many researchers, the ``Yes'' entries in the table above, and to some extent the ``Unknown'' entries because substantial computer searches support Schur positivity in these cases.
In addition, Theorem~\ref{t fat hook} can be generalized to show that the coefficient~of~$s_{\lambda}(\mathbf{x})$ in the generating function of any
D$_0$~graph is nonnegative whenever $\lambda $ is a hook shape or of the form $(a,2,1^b)$.
It remains a major open question to give some uniform explanation of these observations.
Before formulating some specific questions along these lines, let us see exactly how the negative results of this paper make such an explanation difficult.

For the sake of concreteness, let us define a \emph{local property} of a D$_0$~graph  $\G$ to be one that only depends on $\myRes_K \G$ for  $|K| \le 6$. Then axioms 3, $4'a$, $4'b$, $4''b$ and  $\LSP_4$ and  $\LSP_5$ are local in this sense.
Understanding the Schur expansions of  D$_0$~graphs using the theory of noncommutative Schur functions  is well adapted to studying such properties.
For example, if $I$ is  a two-sided ideal of~\,$\U$ generated in degree  $\le 6$ and  $I \supseteq \Irkst$, then the property that a D$_0$~graph $\G$ satisfies  $\sum_{\e{w} \in \ver(\G)} \e{w} \in I^\perp$ is a local property.
These are the natural local properties from this perspective.

However, \textsection\ref{ss solution to lltlp} shows  the limitations of such properties for proving Schur positivity.
If we want to prove that all Assaf LLT$_k$ D~graphs have Schur positive generating functions using a local property corresponding to an ideal $I$ as above, then we must prove that
\begin{list}{(\Alph{ctr})}{\usecounter{ctr} \setlength{\itemsep}{2pt} \setlength{\topsep}{3pt}}
\item for all $k$ and all Assaf LLT$_k$ D~graphs $\G$, there holds $\sum_{\e{w} \in \ver(\G)} \e{w} \in I^\perp$,
\item $\Delta(\sum_{\e{w} \in W} \e{w})$ is Schur positive for every KR set $W$ such that $\sum_{\e{w} \in W} \e{w} \in I^\perp$.
\end{list}
Then we must have  $I^\perp \supseteq (\Iassafst{k})^\perp$ for all  $k$, hence  $I^\perp \supseteq J^\perp$ for  $J = \bigcap_j J^{(j)}$ with  $J^{(j)}$ as in \textsection\ref{ss solution to lltlp}.  But then (B) is false. Hence we must turn to nonlocal properties, properties that depend on edges and not just vertices, or some ``non-uniform'' explanation of Schur positivity that handles each $k$ separately.

Here are some precise questions to guide us towards a better understanding of what makes a  D$_0$~graph Schur positive:
\begin{itemize}
\item In which quotients $\U/I$ of~\,$\U/\Irkst$ can $\mathfrak{J}_\lambda(\mathbf{u})$ be written as a positive sum of monomials? We believe the case $I$ is generated in degree 3 to be particularly important.
\item For which quotients $\U/I$ of~\,$\U/\Irkst$ do all D$_5$ graphs $\G$ satisfying $\sum_{\e{w} \in \ver(\G)} \e{w} \in I^\perp$ have Schur positive generating functions?
\end{itemize}
The negative results of this paper suggest that answers to these questions will not be simple and that there may be no unique, most general condition that guarantees Schur positivity.

\section*{Acknowledgments}
I am extremely grateful to Sergey Fomin for his valuable insights and guidance on this project and to John Stembridge for his generous advice and many detailed discussions.  I thank Sami Assaf, Sara Billey, and Jennifer Morse for valuable discussions and Elaine So and Xun Zhu for help typing and typesetting figures.

\bibliographystyle{plain}
\def\cprime{$'$} \def\cprime{$'$} \def\cprime{$'$}

\end{document}